  \pgfplotsset{compat=1.14}
\theoremstyle{plain}
\newtheorem{thm}{Theorem}[section]
\newtheorem{corollary}[thm]{Corollary}
\newtheorem{lemma}[thm]{Lemma}
\newtheorem{proposition}[thm]{Proposition}
\newtheorem{setup}[thm]{Setup}
\newtheorem*{thm*}{Theorem}
\newtheorem*{corollary*}{Corollary}
\newtheorem*{lemma*}{Lemma}
\newtheorem*{ld*}{Lemma/Definition}
\newtheorem*{proposition*}{Proposition}
\newtheorem*{assumption*}{Assumption}
\theoremstyle{definition}
\newtheorem{definition}[thm]{Definition}
\newtheorem{example}[thm]{Example}
\newtheorem*{definition*}{Definition}
\newtheorem*{example*}{Example}
\newtheorem*{xca*}{Exercise}
\newtheorem*{claim*}{Claim}
\newtheorem*{fact*}{Fact}
\newtheorem*{notation*}{Notation}
\newtheorem*{construction*}{Construction}
\newtheorem*{ack*}{Acknowledgements}
\newtheorem*{question*}{Question}
\newtheorem*{problem*}{Problem}
\newtheorem*{conjecture*}{Conjecture}
\theoremstyle{remark}
\newtheorem{remark}[thm]{Remark}
\newcommand{\C}{\mathbb{C}}
\newcommand{\Z}{\mathbb{Z}}
\newcommand{\Q}{\mathbb{Q}}
\newcommand{\R}{\mathbb{R}}
\newcommand{\pr}[1]{\mathbb P^{#1}}
\newcommand{\aff}[1]{\mathbb A^{#1}}
\newcommand{\HHom}{\mathcal{H}om}
\newcommand{\EXT}[3]{\mathrm{Ext}^{#1}\left( #2,#3\right) }
\newcommand{\HOM}[3]{\mathrm{Hom}_{{#3}}\left( #1,#2\right) }
\DeclareMathOperator{\rk}{rk}
\DeclareMathOperator{\SL}{SL}
\DeclareMathOperator{\coker}{coker}
\DeclareMathOperator{\Hom}{Hom}
\DeclareMathOperator{\Ext}{Ext}
\DeclareMathOperator{\rad}{rad}
\DeclareMathOperator{\td}{td}
\DeclareMathOperator{\im}{Im}
\DeclareMathOperator{\dR}{\mathbf{R}}
\DeclareMathOperator{\Bl}{Bl}
\DeclareMathOperator{\NS}{NS\,}
\DeclareMathOperator{\Pic}{Pic\,}
\DeclareMathOperator{\Quot}{Quot}
\DeclareMathOperator{\Hilb}{Hilb}
\DeclareMathOperator{\Spec}{Spec}
\DeclareMathOperator{\Proj}{Proj}
\DeclareMathOperator{\Supp}{Supp}
\DeclareMathOperator{\textch}{ch}												
\newcommand{\cc}[1]{\text{c}_{{#1}}}
\newcommand{\ch}[1]{\textch_{{#1}}}
\DeclareMathOperator{\Sch}{Sch}
\DeclareMathOperator{\Sets}{Sets}
\DeclareMathOperator{\Coh}{Coh}
\newcommand{\sA}{\mathcal{A}}
\newcommand{\sO}{\mathcal{O}}
\newcommand{\sK}{\mathcal{K}}
\newcommand{\sL}{\mathcal{L}}
\newcommand{\sQ}{\mathcal{Q}}
\newcommand{\sT}{\mathcal{T}}
\newcommand{\sM}{\mathcal{M}}
\newcommand{\sC}{\mathcal{C}}
\newcommand{\sD}{\mathcal{D}}
\newcommand{\sE}{\mathcal{E}}
\newcommand{\sF}{\mathcal{F}}
\newcommand{\PP}{\mathcal{P}}
\newcommand{\sAt}{\mathcal{A}^\dagger}
\newcommand{\st}{\,|\,}					
\newcommand{\AAA}[1]{\textcolor{red}{#1}}
\DeclarePairedDelimiter{\set}{\lbrace}{\rbrace}
\DeclarePairedDelimiter{\ceil}{\lceil}{\rceil}
\DeclarePairedDelimiter{\pair}{\langle}{\rangle}
\DeclarePairedDelimiter{\abs}{\lvert}{\rvert}
\begin{document}

\definecolor{yqyqyq}{rgb}{0.5019607843137255,0.5019607843137255,0.5019607843137255}
\definecolor{uququq}{rgb}{0.25098039215686274,0.25098039215686274,0.25098039215686274}
\definecolor{uuuuuu}{rgb}{0.26666666666666666,0.26666666666666666,0.26666666666666666}

\title{Some Quot schemes in tilted hearts\\ and moduli spaces of stable pairs
       }

\author[F. Rota]{Franco Rota}
\address{FR: Department of Mathematics \\ Rutgers University \\ Piscataway,~
NJ 08854,~USA} 
\email{rota@math.rutgers.edu}

\subjclass[2010]{18E30; 14F05, 14D20}
\keywords{Quot scheme, families of t-structures, tilt, stable pairs, moduli spaces}
\thanks{The author was partially supported by NSF-FRG grant DMS 1664296.}





\begin{abstract}
For a smooth projective variety $X$, we study analogs of Quot schemes using hearts of non-standard $t$-structures of $D^b(\Coh(X))$. The technical framework uses families of $t$-structures as studied in \cite{BLMNPS19}. We provide several examples and suggest possible directions of further investigation, as we reinterpret moduli spaces of stable pairs, in the sense of Thaddeus \cite{Th94} and Huybrechts--Lehn \cite{HL95}, as instances of Quot schemes.
\end{abstract}

\maketitle



\section{Introduction}


Grothendieck's classical Quot scheme parametrizes flat families of quotients of a fixed sheaf over a projective variety $X$ (see \cite{GroSBIV}). The natural setting for this moduli problem is the abelian category of coherent sheaves $\Coh(X)$, which can be regarded as the heart of the standard $t$-structure of $D^b(\Coh(X))$.

In this note, we study examples of certain Quot spaces which generalize the classical notion: Quot spaces parameterize quotient objects of fixed numerical class $v$ in abelian subcategories $\sA\subset D^b(\Coh(X))$, which are the heart of different $t$-structures of $D^b(\Coh(X))$. We focus mostly on the case when $\sA$ is obtained from $\Coh(X)$ by tilting (once or twice), as these hearts play an important role in the study of Bridgeland stability conditions \cite{Bri07_triang_cat}. 

The construction of Quot spaces hinges on the technical tools developed in \cite{AP06} and \cite{Pol07}, Quot spaces appear in \cite{Tod08} and \cite{BLMNPS19}, where the authors discuss representability and verify a valuative criterion for properness. We recall their results in Section \ref{sec_quot_spaces_collections} .
Section \ref{sec_first_examples} describes some examples of Quot spaces on curves and surfaces, and illustrates properness of Quot spaces by computing explicit limits of families of sheaves. \\

Quot spaces provide a very general working framework: Sections \ref{sec_Bradlow_pairs_on_curves} and \ref{sec_HL_pairs_as_quotients} reinterpret moduli spaces of stable pairs on curves and surfaces as examples of Quot spaces, they represent the bulk of this work. 
Section \ref{sec_Bradlow_pairs_on_curves} focuses on moduli spaces of \textit{Bradlow pairs} on a curve $C$, first introduced in \cite{Bra91} (here, we will mostly follow the setup of Thaddeus \cite{Th94} and Bertram \cite{Be97}). These are pairs $(E,s)$, where $E$ is a rank 2 vector bundle on $C$ and $s\colon\sO_C\to E$ is a holomorphic section of $E$, satisfying a GIT-type stability condition with respect to a parameter $\sigma$. Their stability is related via the Kobayashi--Hitchin correspondence to solvability of the vortex equation \cite{Bra91,GP94}.

In \cite{GP94}, the author shows that Bradlow's stability condition is equivalent to that of certain $\SL(2,\C)$-equivariant vector bundles on $C\times \pr 1$, and obtains a general construction of moduli spaces of pairs and an alternative proof to the above mentioned Kobayashi--Hitchin correspondence. The same technique is applied to the study of other objects such as holomorphic triples and chains in \cite{BGP96}.

Here, we interpret moduli spaces of Bradlow pairs as Quot schemes. Then, we construct determinant line bundles on these Quot schemes, and study the birational geometry they induce (Section \ref{sec_determinant_bundles}). The results can be summarized as follows:

\begin{thm}[see Theorem \ref{thm_summary_curves}]
\label{thm_summary_curves_intro}
Let $\mathcal{M}_\sigma^{s}(2,D)$ be the moduli spaces of $\sigma$-stable pairs of rank 2 and fixed determinant $\sO_C(D)$ of degree $d$. Then there exists a one-parameter family of  Quot spaces $\sQ^\beta(v)$, with $\beta=\beta(\sigma)$, such that
\[ \sQ^{\beta}(v)\simeq  \mathcal{M}_\sigma^{s}(2,D). \]
Moreover, determinant line bundles in $\Pic(\sQ^\beta(v))$ induce a sequence of moduli spaces and birational maps
\begin{equation}
    \label{eq_thaddeus_diagram}
    \begin{tikzcd}
\sQ^{\frac d2}(v) \arrow[dashed, "\phi_{\frac d2}"]{r} & \cdots \arrow[dashed, "\phi_{d-2}"]{r} &  \sQ^{d-2}(v) \arrow[dashed, "\phi_{d-1}"]{r} & \sQ^{d-1}(v)
\end{tikzcd}
\end{equation}
which coincides with the wall-crossing diagram obtained by Thaddeus in \cite{Th94}.
\end{thm}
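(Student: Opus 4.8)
The plan is to establish the two assertions of the theorem separately, then note they are compatible. For the first—the isomorphism $\sQ^{\beta}(v)\simeq \mathcal{M}_\sigma^{s}(2,D)$—I would start by recalling the description of $\sigma$-stable pairs $(E,s)$ on $C$: the section $s\colon \sO_C\to E$ together with $E$ of rank $2$ and $\det E = \sO_C(D)$ can be reinterpreted as a two-term complex, and I want to identify it with a quotient $q\colon \sF \twoheadrightarrow Q$ inside the tilted heart $\sA=\sA^\beta$ for an appropriate object $\sF$ (presumably $\sF = \sO_C$ placed in the correct degree, or a shift of $E$ playing the role of a kernel). The key computation is matching the numerical class: one fixes $v=v(\beta)$ so that the quotient object has the Chern character forced by $(2,D)$, and then shows that BTB-stability of $(E,s)$ with parameter $\sigma$ is \emph{equivalent} to the condition that $q$ is a valid quotient in $\sA^\beta$ with no destabilizing subobjects—i.e. that the slope inequalities defining $\sigma$-stability translate termwise into the inequalities cutting out the Quot functor in the tilted heart. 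This dictionary $\beta=\beta(\sigma)$ is the crux: I would write down the tilt $\sA^\beta$ explicitly (tilt of $\Coh(C)$ at the torsion pair determined by slope $\beta$), verify that the relevant short exact sequences in $D^b(\Coh(C))$ lie in $\sA^\beta$, and then check the stability translation on both directions. Representability and the scheme structure then come for free from the results recalled in Section~\ref{sec_quot_spaces_collections} (Toda, BLMNPS), so this part reduces to a functor-of-points identification.

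For the second assertion—the chain of birational maps in \eqref{eq_thaddeus_diagram}—the plan is to construct, for each integer (or half-integer) $\beta$ in the relevant range, a determinant line bundle $L_\beta \in \Pic(\sQ^\beta(v))$ via the standard Knudsen--Mumford / Grothendieck determinant-of-cohomology construction applied to the universal quotient on $\sQ^\beta(v)\times C$ twisted by a suitable class, and to show each $L_\beta$ is semiample, so that $\sQ^\beta(v) \dashrightarrow \Proj \bigoplus_n H^0(L_\beta^{\otimes n})$ is a morphism contracting exactly the locus where $L_\beta$ is not ample. I would then compare with Thaddeus's construction in \cite{Th94}: Thaddeus builds his flips $\mathcal{M}_i$ as moduli of pairs with stability parameter in the $i$-th chamber, and the wall-crossing maps are the two natural contractions from $\mathcal{M}_i$ and $\mathcal{M}_{i+1}$ to a common space. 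The identification from part one, applied chamber by chamber, sends $\mathcal{M}_i$ to $\sQ^{\beta_i}(v)$; the task is to check that the determinant line bundle $L_{\beta}$ I construct on the Quot side is (up to scaling and tensoring by a pullback from the fixed Jacobian/Picard factor) the \emph{same} polarization whose variation drives Thaddeus's wall-crossing. Concretely one matches the first Chern classes of $L_\beta$ against Thaddeus's $\mathcal{O}(1)$ on $\mathcal{M}_i$ using the known cohomology of these moduli spaces, and one checks the birational maps $\phi_\beta$ factor as (contraction)$\circ$(contraction$^{-1}$) through a common intermediate space, exactly as Thaddeus's flips do.

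The main obstacle, I expect, is the stability translation $\beta=\beta(\sigma)$ in part one: BTB-stability is a GIT condition comparing the section's "weight" against sub-line-bundles of $E$, while quotient-in-a-tilted-heart stability is a condition in the heart $\sA^\beta$ about subobjects of the quotient $Q$, and these live a priori in different categories. Making the correspondence precise requires carefully tracking how a destabilizing sub-line-bundle $L\subset E$ (with or without the section factoring through it) produces a destabilizing subobject of $Q$ in $\sA^\beta$ and conversely—and in particular identifying the exact linear relation between the Thaddeus parameter $\sigma$ and the tilt parameter $\beta$, including the boundary/wall values where strictly semistable objects appear. A secondary but real difficulty is showing the Quot spaces $\sQ^\beta(v)$ are actually projective (not merely proper algebraic spaces) so that the determinant line bundles genuinely define morphisms to projective schemes; here I would either invoke a GIT construction directly or use the properness from Section~\ref{sec_quot_spaces_collections} together with the semiampleness of a suitable $L_\beta$ to get projectivity a posteriori. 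Once the dictionary and projectivity are in hand, the comparison with \cite{Th94} should be a matter of matching numerical invariants.
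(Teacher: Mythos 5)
Your proposal follows essentially the same route as the paper: the first part is the dictionary of Lemma \ref{lem_Bradlow_pairs_are_quotients} (a $\sigma$-stable pair $\sO_C\to E$ with $\sigma=\beta r-d$ becomes the epimorphism $\sO_C[1]\to E[1]$ in $\Coh^\beta(C)$, with the stable/semistable discrepancy you flag handled by the $\sigma'=\sigma+\epsilon$ caveat and Propositions \ref{prop_boundary_cases}--\ref{prop_descr_of_walls}), and the second part is exactly the determinant-line-bundle construction of Section \ref{sec_determinant_bundles}, where the interpolated bundle $\sL_r=r\sM_1+(1-r)\sM_2$ is checked against the wall loci $W_n^\pm$ by a Grothendieck--Riemann--Roch computation on their projective-bundle fibers. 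Your plan is correct and matches the paper's proof in both structure and the key computations.
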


We expect that the determinant bundle construction may have a crucial role in proving the projectivity of Quot spaces in general, and studying their birational geometry. This represents a direction of future investigation but remains out of the scope of this work.
On the other hand, Quot spaces can provide insight into birational geometry questions: for example, Bridgeland realizes threefold flops as instances of Quot schemes in \cite{Bri02_flops}.

In the same spirit as above, Section \ref{sec_HL_pairs_as_quotients} relates Quot schemes and \textit{Huybrechts--Lehn pairs} for curves and surfaces constructed in \cite{HL95}. A HL-pair is a pair $(E,s^*)$ where $E$ is a sheaf on a smooth projective $X$ and $s^*\colon E\to E_0$ is a map to a fixed sheaf on $X$.
Section \ref{sec_duality} makes the duality relation between HL- and Bradlow pairs explicit, at least in the case of curves. 

Moving to higher dimensions, one considers a tilt of $\Coh(X)$ with respect to Gieseker slope rather than Mumford slope, and a polynomial stability parameter $\alpha$. Our first observation is a simple consequence of \cite{HL95}:

\begin{corollary}[see Cor. \ref{cor_general_HL_comparison}]
Let $X$ be a smooth projective variety, and let  $\mathcal{M}_{\alpha}^{s}(P,E_0)$ denote the space of $\alpha$-stable HL-pairs on $X$ with fixed Hilbert polynomial $P$. Then there exists a one-parameter family of Quot spaces, $\sQ^\beta(v)$, with $\beta=\beta(\alpha)$ and $v=v(P)$, such that for generic values of $\beta$  we have
\begin{equation}
 \mathcal{M}_{\alpha}^{s}(P,E_0) \simeq  \sQ^\beta(v). \end{equation}
\end{corollary}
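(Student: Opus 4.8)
The plan is to identify the moduli functor of $\alpha$-stable HL-pairs with the Quot functor of quotient objects of $E_0$ in a suitable tilted heart, and then to apply the representability results recalled in Section~\ref{sec_quot_spaces_collections}. First I would fix the torsion pair $(\mathcal{T}_\beta,\mathcal{F}_\beta)$ on $\Coh(X)$ obtained by tilting at Gieseker slope: $\mathcal{T}_\beta$ is generated by the sheaves all of whose Gieseker--Harder--Narasimhan factors have reduced Hilbert polynomial $\succ\beta$, and $\mathcal{F}_\beta$ by those with reduced Hilbert polynomial $\preceq\beta$. Let $\sA^\beta$ denote the resulting heart. I would take $\beta=\beta(\alpha)$ to be the affine reparametrization matching the polynomial parameter $\alpha$ of \cite{HL95} with the cutting polynomial $\beta$, normalized so that $E_0\in\mathcal{T}_\beta$; the class $v=v(P)$ is then forced to be the numerical class with $v=[E_0]-[E]$ for every sheaf $E$ with Hilbert polynomial $P$.

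The heart of the matter is a dictionary between HL-pairs and quotients in $\sA^\beta$. To a pair $(E,s^*)$ I attach $\cI:=\operatorname{Cone}(s^*)$, the two-term complex $[E\xrightarrow{s^*}E_0]$ in cohomological degrees $-1$ and $0$, so that $H^{-1}(\cI)=\ker s^*$ and $H^0(\cI)=\coker s^*$; because $E_0$ is a sheaf in $\mathcal{T}_\beta$ the condition $\coker s^*\in\mathcal{T}_\beta$ is automatic, so $\cI\in\sA^\beta$ holds precisely when $E\in\mathcal{T}_\beta$ and $\ker s^*\in\mathcal{F}_\beta$, and then the triangle $E\to E_0\to\cI\to E[1]$ is a short exact sequence $0\to E\to E_0\to\cI\to0$ in $\sA^\beta$, exhibiting $\cI$ as a quotient of $E_0$ of class $v$. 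Conversely, a quotient $E_0\twoheadrightarrow Q$ in $\sA^\beta$ of class $v$ sits in a short exact sequence $0\to K\to E_0\to Q\to0$ of $\sA^\beta$; taking cohomology sheaves and using $H^i(E_0)=0$ for $i\neq0$ shows that $K$ is a sheaf, lies in $\mathcal{T}_\beta$, and comes with a map $K\to E_0$ whose kernel lies in $\mathcal{F}_\beta$, with $Q=\operatorname{Cone}(K\to E_0)$; thus $(K,K\to E_0)$ is an HL-pair recovering $Q$. This bookkeeping is exactly of the type underlying the constructions of Section~\ref{sec_quot_spaces_collections}.

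It remains to recognize both sides of the dictionary. On the pair side I would invoke \cite{HL95}: their characterization of $\alpha$-(semi)stability of $(E,s^*)$ in terms of reduced Hilbert polynomials of sub- and quotient sheaves --- with the standard case distinction according to whether $s^*$ vanishes on the subsheaf or not --- says, for $\beta=\beta(\alpha)$ generic, precisely that $E\in\mathcal{T}_\beta$ and $\ker s^*\in\mathcal{F}_\beta$; genericity of $\beta$ also forces $\alpha$-semistability and $\alpha$-stability to agree, so \cite{HL95} represents the $\alpha$-stable locus by the projective scheme $\mathcal{M}_\alpha^s(P,E_0)$. I would then verify that the dictionary respects flat families over an arbitrary base $S$: the relative cone of $s^*$ sends a flat family of $\alpha$-stable pairs to an $S$-flat family of quotients in $\sA^\beta$, and vice versa, the input here being the openness and flatness behavior of tilted hearts in families from \cite{AP06,Pol07} used in Section~\ref{sec_quot_spaces_collections}. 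Hence the two functors are isomorphic, the Quot functor is represented by $\sQ^\beta(v)$ by Section~\ref{sec_quot_spaces_collections}, and therefore $\mathcal{M}_\alpha^s(P,E_0)\simeq\sQ^\beta(v)$; letting $\alpha$, hence $\beta$, range over its generic values produces the asserted one-parameter family.

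The step I expect to be the main obstacle is the reconciliation used in the previous paragraph: that \cite{HL95}-stability of $(E,s^*)$ is \emph{literally} the conjunction of the two torsion-pair membership conditions, for a precisely identified $\beta=\beta(\alpha)$. This is where the polynomial (rather than real) nature of the parameters must be handled carefully --- the inequalities take place among reduced Hilbert polynomials ordered lexicographically, so one must keep track of leading versus lower-order coefficients --- together with the two-case structure of \cite{HL95}'s inequality, the strictness of each inequality, and the genericity of $\beta$ needed to discard pairs on a wall. The family-theoretic upgrade, by contrast, should be routine given the machinery already recalled in Section~\ref{sec_quot_spaces_collections}.
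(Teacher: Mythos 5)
Your proposal is correct and follows essentially the same route as the paper: the dictionary between HL-(semi)stability of $(E,s^*)$ and the conjunction $E\in\sT^\beta$, $\ker s^*\in\sF^\beta$ (equivalently, $s^*$ being a monomorphism in the tilted heart, hence the datum of a quotient object of $E_0$ of class $v$) is exactly the content of Lemma \ref{lem_implication_pairs_injections}, and the genericity needed to identify stable and semistable pairs is Lemma \ref{lem_HL_slope_critical_vals_pairs}. The only cosmetic difference is that the paper's Corollary \ref{cor_general_HL_comparison} runs the genericity argument through $\mu$-stability and the Mumford tilt $\Coh^\beta(X)$ with $\beta\in\R$ (using the chain $\mu$-stable $\Rightarrow$ stable $\Rightarrow$ semistable $\Rightarrow$ $\mu$-semistable), whereas you work directly with the Gieseker-polynomial tilt of Definition \ref{def_Gieseker_tilt}.
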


In the case that $X$ is a surface and $E_0=\sO_X$, we obtain a more precise statement (Theorem \ref{thm_summary_HLsurfaces}), which involves a family of quot schemes parametrized by a polynomial parameter $\delta$ and is analogous to  Theorem \ref{thm_summary_curves_intro} above.
Like in the case of curves, we define determinant line bundles on the schemes $\sQ^\delta(v)$  and establish some positivity properties (Lemma \ref{lem_GRR_computation_surfaces}). \\

In the last section we consider a smooth projective surface $S$ and Quot spaces in a double tilt of $\Coh(S)$, and we give other applications and examples. Section \ref{sec_Th_pairs_on_surfaces} describes a class of quotients which are a higher-dimensional analog to the picture obtained in \cite{Be97} for Bradlow pairs: the Quot schemes coincide with certain linear series on $S$, and the first wall-crossing morphism is the blowup of a copy of $S$ in the linear series. 
A similar phenomenon is studied for Veronese embeddings of $\pr 2$ in \cite{Mar17}.

Pandharipande-Thomas stable pairs \cite{PT_curve_counting} are also an example of Quot spaces in a single tilt \cite[Lemma 2.3]{Bri11_Hall_curve_counting}. 
Section \ref{sec_PT_invariants} draws a comparison between certain Quot schemes in a double tilt and the moduli spaces of PT-pairs and illustrates a simple example. 

\subsection*{Conventions} We work over the field of complex numbers. 
If $Y$ is a scheme, we denote by $D_{qc}(Y)$ the unbounded derived category of $\sO_Y$-modules with quasi-coherent cohomologies.
For a Noetherian scheme $S$, we let $K^0(S)$ (resp. $K(S)$) denote the Grothendieck group of vector bundles (resp. coherent sheaves) on $S$.
If $X$ is a smooth quasi-projective variety, we denote  by $D(X)=D^b(\Coh(X))$ the bounded derived category of coherent sheaves on $X$, and by $N(X)\coloneqq K(X)/\rad \chi$ its numerical Grothendieck group. 

Given a vector bundle $E$ over a smooth projective variety $B$, write $\mathbb{P}(E)=\Proj(\text{Sym}^*E)$ for the projective bundle of one dimensional quotients of $E$. We write $\PP(E):=\mathbb{P}(E^{\vee})$ for the projective bundle of lines of $E$.

\subsection*{Acknowledgements} This note originates from a chapter of my Ph.D. dissertation. My gratitude goes to my doctoral advisor, Aaron Bertram, for introducing me to this subject and encouraging me to study it. I wish to thank Huachen Chen, Marin Petkovic, and Matteo Altavilla for our fruitful discussions. I'm grateful to Emanuele Macr\`i for teaching me parts of \cite{BLMNPS19}, and to Mariano Echeverria and Tom Bridgeland for pointing me to interesting references.

\section{Preliminaries}
\label{sec_quot_spaces_collections}

This section recollects preliminary facts about $t$-structures on triangulated categories, families of $t$-structures and Quotient spaces. We follow the setup in \cite{BLMNPS19}, we point the reader there for the results in full generality.

\subsection{Torsion pairs and $t$-structures on triangulated categories} First, we recall some terminology:

\begin{definition}
Given an abelian category $\mathcal A$, a \emph{torsion pair} (or \emph{torsion theory}) for $\mathcal A$ is a pair of full subcategories $(\sT,\sF)$ such that:
\begin{itemize}
\item $\HOM{\sT}{\sF}{}=0$;
\item for any $E\in \mathcal A$, there exists a short exact sequence
\begin{equation*}
0\to E' \to E \to E'' \to 0
\end{equation*}
where $E'\in \sT$ and $E''\in \sF$. 
\end{itemize}
\end{definition}

\begin{definition}
A \emph{t-structure} on a triangulated category $\mathcal D$ is a pair of full triangulated subcategories $(\mathcal D^{\leq 0},\mathcal D^{\geq 0})$ such that:
\begin{itemize}
\item $\Hom(\sD^{\leq 0}, \sD^{\geq 1})=0$ (where we write $\mathcal D^{\leq i}:=\mathcal D^{\leq 0}[-i]$ and $\mathcal D^{\geq i}:=\mathcal D^{\geq 0}[-i]$);
\item For all $E\in \mathcal D$, there exists a triangle  
\begin{equation*}
E' \to E \to E'' \to E'[1];
\end{equation*}
where $E'\in \mathcal D^{\leq 0}$ and $E''\in \mathcal D^{\geq 1}$.
\item $\mathcal D^{\leq 0}[1] \subset \mathcal D^{\leq 0}$.
\end{itemize}
The \emph{heart} of a $t$-structure is the intersection $\mathcal{C}=\mathcal D^{\leq 0} \cap \mathcal D^{\geq 0}$. It is an abelian category and short exact sequences in $\sC$ are precisely triangles with objects in $\mathcal C$.
\end{definition}


Given a torsion pair $(\sT,\sF)$ on an abelian category $\mathcal A$, we can define a $t$-structure on $D(\mathcal A)$ in the following way: set
\begin{equation*}
\begin{split}
\sD^{\leq 0}:= \left\lbrace E\in D(\mathcal A)\,|\,H^i(E)=0 \mbox{ for } i>0 \mbox{ and } H^{0}(E)\in \sT \right\rbrace \\
\sD^{\geq 0}:= \left\lbrace E\in D(\mathcal A)\,|\,H^i(E)=0 \mbox{ for } i<-1 \mbox{ and } H^{-1}(E)\in \sF \right\rbrace 
\end{split}
\end{equation*}
The heart of the above $t$-structure is an abelian category $\sAt=\left\langle \sF[1],\sT \right\rangle$, known as the \emph{tilt} of the abelian category $\mathcal A$ with respect to the torsion pair $(\sT,\sF)$. Explicitly, objects $E\in\sA^\dagger$ are characterized by the following conditions:
\begin{equation*}
    \begin{split}
        H^i(E)=0 \mbox{ for }i\neq 0,-1;\\
        H^0(E)\in \sT \mbox{ and } H^{-1}(E)\in\sF.
    \end{split}
\end{equation*}

\subsection{Collections of $t$-structures on a family}

In this section, we recall the main aspects of \cite[Section 11]{BLMNPS19}, which represents the theoretical framework for this work. Let $X$ be a smooth projective variety, and let $S$ a smooth quasi-projective variety. We will only consider constant families of varieties over $S$, so for every scheme $T$ over $S$, we write $X_T$ as a shorthand for the product $X\times T$.

\begin{definition}
A \textit{fiberwise collection of $t$-structures on }$D(X\times S)$ \textit{over} $S$ is a collection $\underline{\tau}=\set{\tau_s}_{s\in S}$ of $t$-structures $\tau_s$ on $D(X_s)$ for every (closed or non-closed) point $s\in S$. 
\end{definition}

This allows us to define flat objects. To do so, we will need to consider base change of families of $t$-structures, as in \cite[\S 5]{BLMNPS19}. For a scheme $Y$, we denote by $D_{qc}(Y)$ the unbounded derived category of $\sO_Y$-modules with quasi-coherent cohomologies. Nevertheless, this category does not play an important role in the rest of the paper. 

\begin{definition}\label{def_TauHat_t}
Let $\underline{\tau}$ be a fiberwise collection of $t$-structures on $D(X\times S)$ over $S$. Let $s$ be the image of a point $t\in T$ under a morphism $T\to S$: base change of $\tau_s$ gives rise to a $t$-structure $\hat{\tau}_t$ on $D_{qc}(X_t)$, write $(\sA_{qc})_t$ for its heart (see \cite[Theor. 5.3]{BLMNPS19}). We say that an object $E\in D(X_T)$ is \textit{$T$-flat with respect to $\underline{\tau}$} if $E_t\in (\sA_{qc})_t$ for all $t\in T$. 
\end{definition}

This allows us to introduce the notion of openness of flatness:

\begin{definition}
The fiberwise collection of $t$-structures $\underline{\tau}$  \textit{ satisfies
openness of flatness} if for every $S$-perfect object $E \in D(X\times S)$, the set
\[ \set{s\in S \st E_s\in (\sA_{qc})_s} \]
is open. Similarly, \textit{$\underline{\tau}$ universally satisfies openness of flatness} if for every $T \to S$ and every
$T$-perfect object $E \in D(X_T)$, the set
\[ \set{t\in T \st E_t\in (\sA_{qc})_t} \]
is open.
\end{definition}

\subsection{Quot spaces}
Suppose we have a fiberwise collection $\underline{\tau}$ of $t$-structures on $D^b(X\times S)$ over $S$ which universally satisfies openness of flatness. We can then define the Quot spaces:


\begin{definition}\label{def_quot_space}
Let $V \in D(X\times S)$ be an $S$-perfect and $S$-flat object, and fix a class $v\in N(X)$. We denote by
\[ \Quot_{X/S}^{\tau}(V,v)\colon (\Sch/S)^{op} \longrightarrow \;\Sets \]
sending a $S$-scheme $T$ to the set of morphisms
$V_T\to Q$ such that:
\begin{enumerate}
    \item $Q\in D(X_T)$ is $T$-perfect and $T$-flat with respect to $\underline{\tau}$;
    \item The morphism $V_t\to Q_t \in (\sA_{qc})_t$ is surjective for all $t\in T$;
    \item The class $[Q_t]=v \in N(X_t)\simeq N(X)$ for all closed $t\in T$.
\end{enumerate}
    Given $T'\to T$, the corresponding map  $\Quot_{X/S}^{\tau}(V,v)(T)\to \Quot_{X/S}^{\tau}(V,v)(T')$ is given by the pull back of $V_T \to Q$ to $V_{T'}\to Q_{T'}$.
\end{definition}

\begin{proposition}[{\cite[Prop. 11.6]{BLMNPS19}}]
 $\Quot_{X/S}^{\tau}(V,v)(T)$ is an
algebraic space locally of finite presentation over $S$.
\end{proposition}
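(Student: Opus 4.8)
The plan is to deduce this from Artin's representability criterion, exactly as one does for the classical Quot scheme, using the machinery of \cite{BLMNPS19} to handle the non-standard $t$-structure. First I would set up the functor $F \coloneqq \Quot_{X/S}^\tau(V,v)$ as a contravariant functor on $(\Sch/S)^{op}$ and check the easy formal properties: $F$ is a sheaf in the fppf (or even fpqc) topology, since being $T$-perfect, being $T$-flat with respect to $\underline\tau$, and fiberwise surjectivity are all local conditions on $T$ that glue, and quotient morphisms glue by descent for the derived category; and $F$ is locally of finite presentation, i.e. commutes with filtered colimits of affine $S$-schemes, because $X$ is of finite type and all the data involved (the complex $Q$, the map $V_T \to Q$) are finitely presented, so any such datum over a limit ring descends to a finite stage — here one invokes the standard limit arguments for perfect complexes together with the fact that openness of flatness for $\underline\tau$ is a constructible/open condition that is also compatible with limits.

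Next I would verify the deformation-theoretic inputs to Artin's criterion. The key point is that the functor has a good obstruction theory: given a quotient $V_T \to Q$ over an affine $T = \Spec A$ and a square-zero extension $A' \twoheadrightarrow A$ with kernel $I$, the obstruction to lifting the quotient lives in $\Ext^1$ computed in the (base-changed) heart, the lifts form a torsor under $\Hom$ in that heart, and automorphisms are controlled by a $\Hom$ group — concretely, deformations and obstructions are governed by $\RHom_{X_T}(\mathcal K^\bullet, Q \otimes^{\dL}_A I)$ where $\mathcal K^\bullet$ is the (derived) kernel of $V_T \to Q$, and one must check these cohomology groups are finitely generated $A$-modules. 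This finiteness, together with the vanishing of the relevant negative $\Ext$'s coming from $Q \in \sA_{qc}$ and the surjectivity condition, gives the Schlessinger–Rim conditions; the effectivity of formal deformations follows from Grothendieck existence applied to the perfect complex $Q$ over the complete local ring. Combining these with the locally-finite-presentation and sheaf properties above, Artin's theorem yields that $F$ is an algebraic space locally of finite presentation over $S$.

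The main obstacle I expect is precisely the interaction between the openness-of-flatness hypothesis and the infinitesimal/limit arguments: one must be careful that the condition ``$Q_t$ lies in the heart $(\sA_{qc})_t$'' is preserved under the base changes appearing in deformation theory (square-zero extensions) and under passage to limits, so that the functor is well-behaved enough for Artin's criterion to apply. Because $\underline\tau$ is only assumed to \emph{universally} satisfy openness of flatness — not, say, to come from a family of torsion pairs with nice finiteness — the verification that the heart condition deforms uniformly, and that the derived kernel $\mathcal K^\bullet$ stays $T$-perfect, requires care; this is essentially the content of \cite[\S 11]{BLMNPS19} that one would cite or reproduce. Everything else (descent, limits, the cohomology-and-base-change statements for $\RHom$) is routine once this technical core is in place.
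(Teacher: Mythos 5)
The paper offers no argument of its own here: the proposition is quoted verbatim from \cite[Prop.~11.6]{BLMNPS19}, so the relevant comparison is with the proof given there, and that proof follows a genuinely different route from yours. Rather than running Artin's criterion from scratch, \cite{BLMNPS19} reduces to known algebraicity results: Lieblich's theorem that the stack $\sD^b_{pug}$ of universally gluable, relatively perfect complexes is an algebraic stack locally of finite presentation; the hypothesis that $\underline{\tau}$ universally satisfies openness of flatness, which exhibits the stack $\sM_\tau$ of $T$-flat objects as an \emph{open} substack of $\sD^b_{pug}$ (this is Lemma~11.2 of loc.~cit., quoted in a commented-out block of this very source file); and finally a relative $\Hom$-space construction over $\sM_\tau$, representable and affine by cohomology and base change, inside which fiberwise surjectivity cuts out an open locus and the quotient datum rigidifies the objects (a surjection admits no nontrivial automorphisms compatible with $V_T\to Q$), so one lands in algebraic spaces rather than stacks. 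What this buys is that all of the deformation theory, Schlessinger/Rim conditions, and Grothendieck existence for complexes are absorbed into Lieblich's black box, and the only new input specific to the tilted setting is openness of flatness.

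Your Artin-criterion approach is the classical one and is viable in outline, but as written it defers exactly the hard technical content. Two points deserve flags. First, your claim that the obstruction theory is governed by $\RHom_{X_T}(\mathcal K^\bullet, Q\otimes^{\dL}_A I)$ with lifts forming a torsor and the effectivity step following from ``Grothendieck existence applied to the perfect complex $Q$'' is precisely the content of Lieblich's paper for objects of the derived category (not sheaves); redoing it honestly amounts to reproving his theorem, and the torsor structure is more delicate for complexes because lifts of $Q$ to a square-zero extension form a gerbe-like structure controlled by negative self-$\Ext$'s --- this is where the universal gluability condition enters and your sketch never addresses it. Second, preservation of the heart condition under square-zero extensions is not automatic even though $\Spec A'$ and $\Spec A$ share the same points: the derived fiber $Q'\otimes^{\dL}_{A'}k(t)$ differs from $Q\otimes^{\dL}_A k(t)$, so one genuinely needs the base-change theory of $t$-structures from \cite[\S 5]{BLMNPS19} (or openness of flatness applied over $T'$) to conclude, exactly as you anticipate in your final paragraph. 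So the proposal is not wrong, but the route chosen is substantially heavier than the cited one, and its core steps are the ones left unverified.
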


\begin{remark}
The numerical class $v$ does not appear in the definition of \cite[Section 11]{BLMNPS19}, but we may fix it here since we are working on a product $X\times S$. However, this does not guarantee that the Quot space is of finite presentation over $S$. In \cite{Tod08}, the author gives criteria for the boundedness of Quot spaces based on the existence of Bridgeland stability conditions.
\end{remark}

\subsection{Valuative criterion for Quot spaces}

The authors in \cite{BLMNPS19} investigate the following valuative criterion for properness for Quot spaces.

\begin{definition}\label{def_properness_of)quot}
Let $f\colon Z \to Y$ be a morphism of algebraic spaces. Let $R$ be a discrete valuation ring with fraction field $K$. Then we say that $f$ \textit{satisfies the strong existence part of the valuative criterion with respect to }$\Spec(R)\to Y$ if for any commutative square
\begin{equation}
\begin{tikzcd}
\Spec(K) \dar\rar &Z \dar{f}\\
\Spec(R) \rar \arrow[ur,dashed]{} &Y
\end{tikzcd}
\end{equation}
there exists a dashed arrow making the diagram commute. We say that $f$ \textit{satisfies the uniqueness part of the valuative criterion with respect to }$\Spec(R)\to Y$ if, there exists at most one dashed arrow making the diagram above commute. 
\end{definition}

To give the precise statement, we introduce the notion of a local $t$-structure on $D(X\times S)$.  

\begin{definition}
A $t$-structure
on $D(X\times S)$ is called \textit{$S$-local} if for every quasi-compact open $U \subset S$, there exists a $t$-structure
on $D(X\times U)$ such that the restriction functor $D(X\times S) \to D(X \times U)$ is $t$-exact.
\end{definition}

\begin{remark}
An $S$-local $t$-structure on $D(X\times S)$ (also called a \textit{sheaf of $t$-structures on $D(X\times S)$ over S} in \cite{AP06}) produces a fiberwise collection of $t$-structures via base change as in \cite[Theorem 5.3]{BLMNPS19}. The next definition is about the converse.
\end{remark}

\begin{definition}
Let $\underline{\tau}$ be a fiberwise collection of $t$-structures on $D(X\times S)$ over $S$, and let $T \to S$ be a morphism from a scheme $T$ which is quasi-compact with affine diagonal. We say $\underline{\tau}$ is
\textit{integrable over $T$} if there exists a $t$-structure $\tau_T$ on $D(X_T)$ such that for every $t \in T$ the $t$-structure
on $D(X_t)$ induced by base change agrees with the $t$-structure $\hat{\tau}_t$ on $D(X_t)$ from Definition \ref{def_TauHat_t}. In this situation, we say \textit{$\underline{\tau}$ integrates over
$T$ to the $t$-structure $\tau_T$}.
\end{definition}

Then we have the following:
\begin{proposition}\label{prop_properness}
Let $V\in D(X\times S)$ be an $S$-perfect and $S$-flat object. Let  $\Spec(R) \to S$ be a morphism, and assume that $\underline{\tau}$ integrates over $\Spec(R)$ to a
bounded $\Spec(R)$-local $t$-structure on $D(X_R)$ whose heart $\sA_R$ %
is noetherian.
Then the morphism $ \Quot_{X/S}^{\tau}(V)\to S$ satisfies the strong existence and the uniqueness parts of
the valuative criterion with respect to $\Spec(R) \to S$.
\end{proposition}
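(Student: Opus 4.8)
The plan is to verify the two parts of the valuative criterion (Definition~\ref{def_properness_of)quot}) directly, reducing both to statements about the abelian category $\sA_R$ which is the heart of the integrated $t$-structure on $D(X_R)$. Set $X_K = X \times \Spec(K)$ and $X_R = X \times \Spec(R)$, with $j\colon X_K \hookrightarrow X_R$ the open immersion and $i\colon X_k \hookrightarrow X_R$ the closed immersion of the special fiber ($k$ the residue field). A $T$-point of $\Quot_{X/S}^\tau(V)$ over $\Spec(R)\to S$ is a surjection $V_R \twoheadrightarrow Q$ in $\sA_R$ (using that $\sA_R$ integrates $\underline\tau$, so being $R$-flat with respect to $\underline\tau$ is the same as lying in $\sA_R$, and fiberwise surjectivity lifts to surjectivity in $\sA_R$); similarly a $K$-point is a surjection $V_K \twoheadrightarrow Q_K$ in $\sA_K$. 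Both the heart $\sA_R$ and its restriction $\sA_K$ are abelian and noetherian.

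\textbf{Existence.} Given a commutative square, we have a surjection $q_K\colon V_K \twoheadrightarrow Q_K$ in $\sA_K$, and we must produce a surjection $V_R \twoheadrightarrow Q$ in $\sA_R$ restricting to it (and with the right numerical class, which is automatic by flatness once extended). The idea is the standard one: push forward and saturate. Form $j_* Q_K$ in $D_{qc}(X_R)$; since the $t$-structure is $\Spec(R)$-local, $j$ is $t$-exact, so $j_*$ sends the heart over $K$ into the heart $(\sA_{qc})_R$ over $R$. Consider the composite $V_R \to j_* j^* V_R = j_* V_K \to j_* Q_K$; let $Q$ be its image in $\sA_R$. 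Then $V_R \twoheadrightarrow Q$ is a surjection in $\sA_R$ by construction, and restricting along $j$ recovers $V_K \twoheadrightarrow Q_K$ because $j^*$ is exact and $j^* j_* = \id$ on the heart over $K$. The remaining point is that $Q$ is $R$-flat with respect to $\underline\tau$, i.e. lies in $\sA_R$ and not merely in $(\sA_{qc})_R$: here one uses that $Q$ is a subobject of $j_*Q_K$, together with noetherianity of $\sA_R$ and the fact that $V_R$ is $R$-perfect and $R$-flat, to conclude $Q$ is $R$-perfect; flatness over $\Spec(R)$ for objects of a noetherian heart is equivalent to the absence of $R$-torsion, and $Q \hookrightarrow j_*Q_K$ kills torsion (anything supported on the special fiber maps to zero under $j^*$ and hence, being a subobject of a $j_*$, is zero). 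This is the step I expect to be the main obstacle: making precise that ``image of $V_R \to j_*Q_K$ in $\sA_R$'' is well-behaved requires knowing $j_*Q_K \in (\sA_{qc})_R$ and that the heart is closed under the relevant sub/quotient operations, which is exactly what $\Spec(R)$-locality and noetherianity of $\sA_R$ buy us, but the flatness verification is the delicate part.

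\textbf{Uniqueness.} Suppose $V_R \twoheadrightarrow Q_1$ and $V_R \twoheadrightarrow Q_2$ are two surjections in $\sA_R$ inducing the same surjection $V_K \twoheadrightarrow Q_K$ after restriction. Let $K_1, K_2 \in \sA_R$ be their kernels, so $K_1, K_2 \hookrightarrow V_R$ and $j^* K_1 = j^* K_2$ as subobjects of $V_K$. It suffices to show $K_1 = K_2$ inside $V_R$. Consider $K := K_1 \cap K_2$ and the quotients $K_i / K$; each $K_i/K$ injects into $V_R/K$ and restricts to zero over $K$, hence is supported on the special fiber. On the other hand $K_i/K \hookrightarrow V_R / K_{3-i}$ up to the obvious maps — more carefully, $K_1/(K_1\cap K_2) \hookrightarrow V_R/K_2 = Q_2$, exhibiting a subobject of the $R$-flat object $Q_2$ which is supported on the special fiber; since $Q_2$ has no $R$-torsion (being $R$-flat, as above), this subobject is zero, so $K_1 \subset K_2$, and symmetrically $K_2 \subset K_1$. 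Hence $Q_1 = Q_2$ as quotients of $V_R$, and the two $\Spec(R)$-points agree. Boundedness of the $t$-structure is used to guarantee that all the cohomological truncations and the identification of $\sA_R$-subobjects with honest complexes behave as in the classical case; noetherianity guarantees the images and intersections above exist in $\sA_R$.
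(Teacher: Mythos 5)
Your argument is essentially correct, but it is not the route the paper takes: the paper's entire proof is a one\--line citation of \cite[Prop.~11.11]{BLMNPS19}, together with the observation that the cited result is actually stronger (it replaces noetherianity of $\sA_R$ by the weaker hypothesis that $\sA_R$ admits a $\Spec(R)$-torsion theory in the sense of \cite[Section 6]{BLMNPS19}). What you have written out is, in effect, the direct Langton/Abramovich--Polishchuk-style argument that underlies that citation: extend by taking the image of $V_R\to j_*Q_K$ in the heart, use noetherianity of $\sA_R$ to see the image is coherent, detect $R$-flatness by the absence of subobjects supported on the closed fibre, and prove uniqueness by showing $K_1/(K_1\cap K_2)$ is a torsion subobject of the flat object $Q_2$. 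The trade-off is clear: your version is self-contained and makes visible exactly where noetherianity and $\Spec(R)$-locality enter, while the paper's version is shorter and inherits the more general statement. Be aware, though, that the two steps you flag as ``delicate'' --- that $j_*Q_K$ lands in the quasi-coherent heart $(\sA_{qc})_R$, and that for objects of $\sA_R$ flatness over the DVR is equivalent to having no subobject killed by $j^*$ --- are precisely the content of the $\Spec(R)$-torsion-theory machinery of \cite[Section 6]{BLMNPS19}; as written they are asserted rather than proved, so your argument is a correct skeleton whose load-bearing joints are still the external results the paper cites. You should also record explicitly that $V_R\in\sA_R$ (from $S$-perfection and $S$-flatness of $V$ together with boundedness of the integrated $t$-structure), since your noetherianity argument for the image quotient depends on it.
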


\begin{proof}
This is a weaker restatement of Proposition 11.11 in \cite{BLMNPS19}. There, the assumption on the Noetherianity of $\sA_R$ is replaced by the weaker requirement that $\sA_R$ has a $\Spec(R)$-torsion theory (see \cite[Section 6]{BLMNPS19}).
\end{proof}

\begin{remark}
[Constant families of $t$-structures]
In what follows we will focus mainly on fiberwise collections of $t$-structures on $X\times S$ which are \textit{constant over $S$}, i.e. $\tau_s$ is the same $t$-structure on $D(X_s)\simeq D(X)$ for all closed $s\in S$. 

Examples of such collections were constructed in \cite{AP06}, starting from a given $t$-structure $\tau$ on $D(X)$. In the case that $\tau$ is \textit{noetherian} (i.e., it has a noetherian heart), then the corresponding constant collection $\underline{\tau}$ is $S$-local and satisfies openness of flatness. Moreover, the associated Quot spaces satisfy the above valuative criterion (a proof of Prop. \ref{prop_properness} in this setting appears in the author's PhD thesis \cite{Rota19thesis}). 
\end{remark}

\subsection{A premise to the remaining Sections}\label{Sec_premise}
What follows contains several examples of Quot spaces. Some of the $t$-structures considered do not clearly fit the framework of Section \ref{sec_quot_spaces_collections}. For example, openness of flatness is problematic for a constant family of $t$-structure constructed from a $t$-structure which is only \textit{tilted-Noetherian} (i.e. a $t$-structure that can be obtained by tilting a noetherian one. These are also called \textit{close to noetherian} in \cite{Pol07}) and not noetherian (see Remark \ref{rmk_noetherianity}).
However, it will be clear \textit{a posteriori} that the Quot spaces considered below are represented by projective schemes of finite type. 

\section{Some Quot spaces and wall-crossing phenomena}
\label{sec_first_examples}

Before moving on to a more systematic study of some Quot spaces, we present a few small examples for concreteness' sake. This also allows us to introduce some notation. 

First, we recall the definition of an important $t$-structure on $D^b(Y)$, in the case that $Y$ is a smooth projective variety.

\begin{definition}\label{def_Mumford_tilt}
If $Y$ is a smooth projective variety of dimension $n$ with an ample class $H$, we define the \textit{Mumford stability function} as
\[ Z(E)= - \deg (E) + i \rk(E) \]
and the associated slope to be $\mu(E)\coloneqq \frac{\deg(E)}{\rk(E)}$
where $\deg(E)\coloneqq \ch 1(E).H^{n-1}$ and $\rk(E)\coloneqq \ch 0(E)H^n$. The function $Z$ allows one to define a \textit{torsion pair}:
\begin{align*}
  \sT^{\beta} &= \{E\in \Coh(Y)\mid \mbox{ any quotient $E\twoheadrightarrow G\neq 0$ satisfies } \mu(G)>\beta\},\\
  \sF^{\beta} &= \{E\in \Coh(Y)\mid \mbox{ any subobject $0\neq F\hookrightarrow E$ satisfies }
\mu(F)\leq \beta\}
\end{align*}
and a corresponding $t$-structure $\tau_\mu^\beta$ whose heart $\Coh^\beta(Y)\coloneqq \pair{\sF_\beta[1],\sT_\beta}$ is called the tilt of $\Coh(Y)$ along the torsion pair $(\sT^\beta,\sF^\beta)$. 

For a base $S$, we denote by $\underline{\tau}_\mu^\beta$ the constant fiberwise collection of $t$-structures on $Y\times S$ obtained from $\tau_\mu^\beta$.
\end{definition}

\begin{remark}\label{rmk_noetherianity}
The heart $\Coh^\beta(Y)$ is noetherian if $\beta\in\Q$ \cite[Lemma 6.17]{MS17}, but it is not noetherian in general. This seems to be a known fact, but we give here an explicit counterexample (Example \ref{example_non_noeth}).
\end{remark}

\begin{example}[A non-noetherian tilt]\label{example_non_noeth}
The fact that $\Coh^\beta(C)$ is not noetherian in general is known but little is present in the literature about it, we provide here an explicit counterexample. Let $C$ be a smooth elliptic curve, and let $0<\beta\in\R\setminus \Q$. Let $Z$ be the Mumford stability function and consider its image $\Z^2\subset \C\simeq \R^2$. Denote by $\alpha_0=(0,1)$ the image of $\sO_C$ and by $w=(-1,0)$ that of a skyscraper sheaf.
The line $l\coloneqq\set{y=-\frac 1\beta x}$ has irrational slope in $\R^2$. 

\begin{claim*}\label{lem_irrational_line}
There exists a point $\alpha=(-d,r)\in\Z^2$ such that:
\begin{enumerate}[(i)]
    \item $0<\frac dr < \beta<\frac{d}{r-1}$;\label{part1}
    \item $\alpha$ is primitive;\label{part2}
    \item the triangle bounded by the line $l$, the line through $\alpha$, and the horizontal line $y=r$ contains no lattice point except from $\alpha$.\label{part3}
\end{enumerate}
\end{claim*}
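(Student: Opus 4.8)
The plan is to find the desired lattice point $\alpha$ by a Diophantine approximation argument, viewing the problem entirely in $\R^2$. Recall that $\beta$ is a fixed positive irrational, the line $l$ has slope $-1/\beta$ through the origin, and I want a primitive point $\alpha = (r, -d)$ with $r \geq 2$ satisfying $\frac{d}{r} < \beta < \frac{d}{r-1}$; this says $\alpha$ lies strictly below $l$ while $\alpha - \alpha_0' := (r-1, -d)$ lies strictly above $l$, where I think of the shift by $(1,0)$ as subtracting (the image of $\sO_C$ is $(0,1)$, a skyscraper is $(-1,0)$, so moving from $E$ to a quotient by $\sO_C$ shifts $\ch_0$ by one). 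The inequality $\frac{d}{r} < \beta < \frac{d}{r-1}$ is exactly the statement that $d = \lfloor \beta r \rfloor$ and moreover $\beta(r-1) < d$, i.e. $\beta r - \beta < d \leq \beta r$, so I need $\{\beta r\} < \beta$ together with $\beta r \notin \Z$ (automatic since $\beta$ is irrational). First I would invoke equidistribution of $\{\beta r\}_{r \geq 1}$ (or just Dirichlet/three-distance): there are infinitely many $r$ with $\{\beta r\} < \beta$, in fact with $\{\beta r\}$ as small as we like.

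Next I need primitivity of $\alpha = (r, -d)$, i.e. $\gcd(r, d) = 1$. Among the infinitely many good $r$, I would argue primitivity can be arranged: the good $r$'s are not concentrated on a single arithmetic progression (equidistribution gives positive density in every residue class, or one can pass to the sequence $r_k$ of denominators of continued-fraction convergents of $\beta$, for which $\gcd(r_k, d_k) = 1$ is automatic and $\{\beta r_k\}$ is tiny). Taking $\alpha$ to be (the point coming from) a continued-fraction convergent $p_k/q_k$ of $\beta$ with $q_k$ large handles parts (\ref{part1}) and (\ref{part2}) simultaneously, since convergents are automatically in lowest terms and satisfy $|q_k\beta - p_k| < 1/q_{k+1}$, which forces $0 < \beta q_k - p_k$ or $0 < p_k - \beta q_k$ to be extremely small — I just need to select a convergent from below so that $p_k = \lfloor \beta q_k \rfloor$ and the error $\beta q_k - p_k < \beta$, which holds once $1/q_{k+1} < \beta$.

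For part (\ref{part3}), the emptiness of the small triangle cut out by $l$, the ray through $\alpha$, and the horizontal line $y = r$ (I read this as $x = r$, the vertical line through $\alpha$; with $\alpha=(r,-d)$ the relevant bounding line is the one through the first coordinate), I would use that $\alpha$ is primitive together with the smallness of the gap $\beta r - d$: the triangle has one vertex at $\alpha$, one on $l$ directly above or below, and its third vertex where $l$ meets the vertical line through the origin-direction — its area is on the order of $(\beta r - d)$ times a bounded quantity, which by choosing the convergent far enough out is less than $1/2$, so by Pick's theorem (a lattice triangle of area $< 1/2$ with a primitive edge, hence no interior and no extra boundary lattice points) it contains no lattice point other than $\alpha$. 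The one subtlety is making sure the side of the triangle from $\alpha$ along the ray to the origin contains no intermediate lattice point: this is precisely primitivity of $\alpha$, already secured.

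The main obstacle I anticipate is bookkeeping the geometry of part (\ref{part3}) — correctly identifying the three bounding lines, checking the triangle is nondegenerate, and getting a clean area bound of the form $c(\beta r - d) < 1/2$ uniformly — rather than any deep number theory; the Diophantine input (infinitely many convergents from below with error $< \beta$ and as small as desired) is standard. I would therefore spend most of the write-up pinning down the triangle explicitly in coordinates, expressing its area as a determinant in terms of $r$, $d$, and $\beta$, and then quoting Pick's theorem.
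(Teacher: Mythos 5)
Your proposal is correct in substance, but it proves the Claim by a genuinely different route than the paper. The paper's argument is soft: it uses only that the translation orbit of $l$ in $\R^2/\Z^2$ is dense (so some lattice point lies within vertical distance $1$ of $l$ on the correct side, giving (i)), and then obtains (iii) by a finiteness/minimality argument — among the finitely many lattice points in the candidate triangle $T$, one must cut out a triangle containing no others — with (ii) deduced \emph{from} (iii), since a non-primitive $\alpha$ would have a proper multiple lying on the edge of its own triangle. Your argument runs the logic in the other direction and is quantitative: taking $\alpha$ from a continued-fraction convergent $p_k/q_k$ approaching $\beta$ from below gives (i) and (ii) immediately ($p_k=\lfloor \beta q_k\rfloor$, $\beta q_k-p_k<1/q_{k+1}<\beta$, $\gcd(p_k,q_k)=1$), and then (iii) follows from an explicit area computation: the triangle has area $\tfrac{r}{2}(\beta r-d)<\tfrac{q_k}{2q_{k+1}}<\tfrac12$, too small to contain any lattice point off the segment $O\alpha$. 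What your approach buys is an effective choice of $\alpha$ and a one-line verification of (iii); what the paper's buys is that it avoids continued fractions entirely and, more importantly, iterates cleanly after the $\mathrm{SL}_2(\Z)$ change of coordinates used in the rest of Example 3.3, where the transformed line is no longer presented as $y=-x/\beta'$ for a positive irrational $\beta'$.

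Two small points to tighten. First, Pick's theorem does not apply directly to $T$, since its third vertex (the intersection of $l$ with the horizontal line through $\alpha$) is not a lattice point; the clean statement is that any lattice point $\gamma\in T$ not on the line $O\alpha$ would span with $O$ and $\alpha$ a nondegenerate lattice triangle inside $T$ of area at least $\tfrac12$, contradicting your area bound, while $\gamma$ on that line is excluded by primitivity and the constraint on the second coordinate. Second, the coordinate confusion you flag is real but is the paper's, not yours: the Claim writes $\alpha=(r,-d)$ while condition (iii) and the figure require the second coordinate of $\alpha$ to be the rank $r$ (consistent with $Z(E)=(-d,r)$); your Diophantine conditions are stated coordinate-independently and survive the correction.
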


\begin{proof}[Proof of claim]
The condition $\frac dr < \beta<\frac{d}{r-1}$ is equivalent to requiring that the vertical distance between $\alpha$ and $l$ is smaller than 1. This can be arranged: since $\beta$ is irrational, the translation orbit of $l$ in the flat torus $\R^2/\Z^2$ is dense, so there are lattice points arbitrarily close to $l$. 
Then, pick such an $\alpha'=(-d',r')$, and consider the triangle $T$ bounded by the line $l$, the line through $\alpha'$, and the horizontal line $y=r'$. If $T$ contains any lattice points $\gamma$, they still satisfy \eqref{part1}. One of these lattice points must satisfy \eqref{part3}, and hence also \eqref{part2}.
\end{proof}

Pick $\alpha_1\coloneqq\alpha=(-d_1,r_1)$ as in the Claim, and let $\mu_1=\frac{d_1}{r_1}$. We can apply a change of coordinates $\phi$ sending $\alpha_1 \mapsto \alpha_0$, and $w\mapsto w$, and apply the Claim once more with the transformed line $\phi(l)$, to find a lattice point $\alpha_2'$, write $\alpha_2=(-d_2,r_2)\coloneqq \phi^{-1}(\alpha_2')$ for its inverse image, it satisfies \eqref{part1}. As above, by possibly replacing $\alpha_2$ with another lattice point we may assume it satisfies \eqref{part2} and \eqref{part3}.
Iterating this process, we can construct a sequence of lattice points $\alpha_i=(-d_i,r_i)$, with slopes $\mu_i\coloneqq \frac{d_i}{r_i}$, satisfying:

\begin{enumerate}
    \item for all $i$,  $0<\mu_1<...<\mu_i < \beta<\frac{d_i}{r_i-1}$;\label{part1i}
    \item $\alpha_i$ are primitive; \label{part2i}
    \item the triangle $T_i$ bounded by $l$, the line through the origin and $\alpha_i$, and the horizontal line $y=r_i$ contains no lattice points except $\alpha_i$.\label{part3i}
\end{enumerate}

By Atiyah's classification of stable bundles on elliptic curves, for each $i$ there exists a slope-stable vector bundle $A_i$ on $C$, of rank $r_i$ and degree $d_i$ \cite{Ati57}. By stability, $A_i\in \sF^\beta$ for all $i$, so $A_i[1]\in \Coh^\beta(C)$.

The claim now is that there exists a sequence of maps 
\[ A_0\coloneqq\sO_C \xrightarrow{q_0} A_1 \xrightarrow{q_1} A_2 \xrightarrow{q_2} ... \xrightarrow{q_i} A_i \to ... \]
and that each $q_i[1]$ is an epimorphism in $\Coh^\beta(C)$. This will give the chain of non-trivial quotients which contradicts noetherianity: the chain of kernels $M_i\coloneqq \ker(q_i[1])\in \Coh^\beta(C)$ produces an infinite chain of subobjects
\[ M_0 \subsetneq M_1 \subsetneq M_2 \subsetneq ... \subsetneq \sO_C[1]. \]

First, the existence of such maps is a Riemann--Roch computation: 
\[ \hom(A_i,A_{i+1})\geq \chi(A_i,A_{i+1})=r_id_{i+1} - r_{i+1}d_i = (\mu_{i+1} - \mu_{i})r_ir_{i+1}>0, \]
so we may pick a non-zero $q_{i}\in \Hom(A_i,A_{i+1})$.
Each map $q_i$ has a kernel and a cokernel
\[ K_i \to A_i \xrightarrow{q_i} A_{i+1} \to H_i, \]
and $q_i[1] \colon A_i[1] \to A_{i+1}[1]$ is an epimorphism in $\Coh^\beta(C)$ if and only if the complex $K_i[1] \oplus H_i\in \Coh^\beta(C)$. Since $K_i$ is a subsheaf of $A_i$ we have $K_i\in\sF^\beta$, so we only need to check $H_i\in\sT^\beta$. This is true: by stability of $A_{i+1}$ we have that $\mu(Q)>\mu_{i+1}$ for all quotients $Q$ of $H_i$, and $Z(Q)\notin T_{i+1}$ by \eqref{part3i}, which shows $H_{i}\in \sT^\beta$ (see Fig. \ref{figure}).

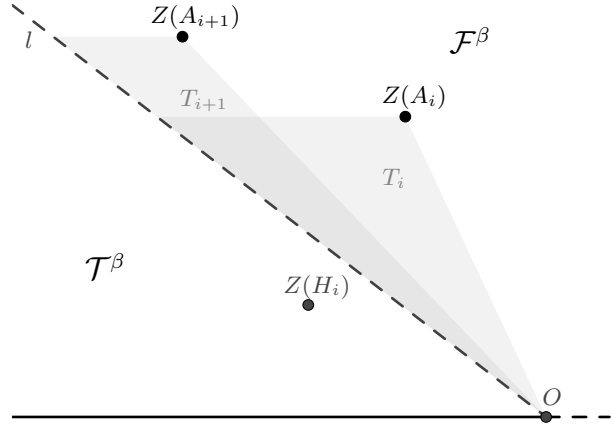
\begin{figure}[h]
\centering
\begin{tikzpicture}[line cap=round,line join=round,>=triangle 45,x=1cm,y=1cm]
\clip(-7.018965330510831,-0.5) rectangle (1,6);
\fill[line width=2pt,color=yqyqyq,fill=yqyqyq,fill opacity=0.1] (-5.119919996333709,3.982159997148441) -- (0,0) -- (-1.8518253157177245,3.9821599971484405) -- cycle;
\fill[line width=2pt,color=yqyqyq,fill=yqyqyq,fill opacity=0.1] (-6.483441191393253,5.042676482194753) -- (0,0) -- (-4.782199813872014,5.042676482194753) -- cycle;
\draw [line width=1pt,domain=-7.018965330510831:0] plot(\x,{(-0-0*\x)/-6});
\draw [line width=1pt,dash pattern=on 5pt off 5pt,domain=0:0.8335283159331679] plot(\x,{(-0-0*\x)/4});
\draw [line width=1pt,dash pattern=on 5pt off 5pt,color=uququq,domain=-7.018965330510831:0] plot(\x,{(-0--4.83*\x)/-6.21});
\begin{scriptsize}
\draw [fill=uuuuuu] (0,0) circle (2pt);
\draw[color=uuuuuu] (0.07868246473961923,0.27) node {$O$};
\draw[color=uququq] (-6.8,5) node {$l$};
\draw [fill=black] (-1.8518253157177245,3.9821599971484405) circle (2pt);
\draw[color=black] (-1.7350443721559907,4.242110037153214) node {$Z(A_i)$};
\draw [fill=black] (-4.782199813872014,5.042676482194753) circle (2pt);
\draw[color=black] (-4.612894179831394,5.300991022855274) node {$Z(A_{i+1})$};
\draw[color=yqyqyq] (-2,3.1622611111402232) node {$T_i$};
\draw[color=yqyqyq] (-4.5,4.200174156531351) node {$T_{i+1}$};
\draw [fill=uuuuuu] (-3.1278928810009834,1.4852594649578426) circle (2pt);
\draw[color=uuuuuu] (-3.0140887311228366,1.7259571998413896) node {$Z(H_i)$};
\end{scriptsize}
\draw[color=black] (-5.8,2) node {$\sT^\beta$};
\draw[color=black] (-1,5) node {$\sF^\beta$};
\end{tikzpicture}
\caption{The positions of $Z(A_i)$, $Z(A_{i+1})$ and $Z(H_i)$.}
\label{figure}
\end{figure}

\end{example}

Definition \ref{def_Mumford_tilt} suggests that varying the parameter $\beta$ may give rise to wall-crossing phenomena. This is illustrated in the next few sections.

\subsection{Atiyah flop as wall-crossing of Quot schemes}

Let $X\coloneqq \pr 1\times S$, where 
$$S\coloneqq\Hom_{\pr 1}(\sO(-2),\sO)=\aff 3$$
is the space of quadrics in $\pr 1$. Let $V\in D(X)$ be the universal quadric. For $\phi\in S$ a closed point, we have 
$ V_\phi \coloneqq [\sO(-2)\xrightarrow{\phi} \sO ]$. Let $\C_p$ denote a skyscraper sheaf supported at $p\in\pr 1$. For $\beta\in \R$, define by $\tau^\beta_s\coloneqq \tau_\mu^\beta$ on $X_s=\pr 1_s$, and let $\underline{\tau}^\beta$ be the corresponding fiberwise collection of $t$-structures on $X$ over $S$.
We describe the Quot space
\[\sQ^\beta_S \coloneqq\Quot_{X/S}^{\underline{\tau}^\beta}(V,[\C_p])(S),\]
as $\beta$ varies in an interval $I\coloneqq(-1-\epsilon , -1+\epsilon)$, denote by $\sQ^\beta_T$ its base change over $T \to S$.

If $\phi\neq 0$, then $V_\phi$ is quasi isomorphic to $\sO_Z$ where $Z$ is a length 2 subscheme of $\pr 1$, hence $\sQ^\beta_{\aff 3\setminus \set{0}}$ is a $2\colon 1$ cover of $\aff 3 \setminus \set{0}$ ramified over the quadric defined by the discriminant. This is independent of the value of $\beta$.

In other words, $\sQ^\beta_{\aff 3\setminus \set{0}}$ is isomorphic to the cone $X$ over a quadric in $\aff 4$ with the vertex removed. Such a cone admits two small resolutions $X^+$ and $X^-$ related by a flop. Let $-1<\beta^+\in I$ and $-1>\beta^-\in I$. The exact sequence in cohomology of a quotient in $\sQ_0^{\beta^+}$ reads
$$ 0\to \sO(-2) \to \sO(-1) \to \C_p \xrightarrow{0} \sO \xrightarrow{\sim} \sO \to 0 $$
so that $\sQ^{\beta^+}_0\simeq \pr 1$. 

Similarly, the cohomology sequence of a quotient in $\sQ^{\beta^-}_0$ has the form
$$ 0\to \sO(-2) \xrightarrow{\sim} \sO(-2)\to 0 \to \sO(-1) \to \sO \to \C_p \to 0$$
and $\sQ^{\beta^-}_0\simeq \pr 1$. 
This gives a description $X^-\simeq \sQ^{\beta^-}_S$ and $\sQ_S^{\beta^+}\simeq X^+$. 

\begin{remark}\label{rmk_s_times_beta}
In this example, we can interpreted the base space to be $S\times I$, where the collection of $t$-structures is given by $\tau_\mu^\beta$ on all closed points in $S\times \set{\beta}$. However, it helps the exposition to keep parameters in $S$ and $I$ separate.
\end{remark}


\subsection{Point quotients of cubics on $\pr 1$}

This time, let $S$ be the space of cubics on $\pr 1$, $S\coloneqq\Hom_{\pr 1}(\sO(-3),\sO)=\aff 4$, and keep the rest of the notation as in the previous example. Let $V\in D(X)$ be the universal cubic and consider the relative Quot scheme 
$$ \sQ^\beta_S \coloneqq \Quot_{X/S}^{\underline{\tau}^\beta}(V,[\C_p])(S) $$
as $\beta$ varies in an interval $I\coloneqq(-2-\epsilon,-1+\epsilon)$.

Arguing as in the previous example, one sees that $\sQ^\beta_{\aff 3\setminus \set{0}}$ is a $3\colon 1$ cover of $\aff 4 \setminus \set{0}$ ramified over the quartic discriminant hypersurface. This is independent of the value of $\beta$.

We have that $V_0=\sO\oplus \sO(-3)[1]$. If $\beta > -1$, then $\sO(-1)\in\sF^\beta$. Consider a short exact sequence 
\[ (*)\colon \ \ 0\to K\to V_0 \to Q \to 0. \]
The only possibility for the associated exact sequence in cohomology is 
$$ 0\to \sO(-3) \to \sO(-1) \to \sO_Z \xrightarrow{0} \sO \xrightarrow{\sim} \sO \to 0 $$
where $Z$ is a length 2 subscheme of $\pr 1$ (if $H^0(Q)$ is torsion, then $H^0(K)$ surjects on some $\sO(d)$ with $d<0$, which is in not in $\sT^\beta$). 
After crossing the wall at $\beta=-1$, we see that the cohomology sequence associated to $(*)$ has the form
$$ 0\to \sO(-3) \to \sO(-3) \to 0 \to \sO(-1) \to \sO \to \C_p \to 0 $$

Then, $\sQ_0^{-1-\epsilon}$ is identified with $\pr 1\simeq \PP(\Hom(\sO(-1),\sO))$. The value $\beta=-2$ is only a potential wall: the object $\sO(-2)$ does not appear in the Harder-Narasimhan filtrations of any of the quotients appearing in $\sQ_0^{-2+\epsilon}$ and $\sQ^{-2-\epsilon}_0$: one checks that they have to have the same cohomology sequence on both sides of the wall).

\subsection{Non-emptiness across a wall}
We consider now the following example: let $S$ denote the space
$$S=\pr 5=\PP(\EXT{2}{\sO_{\pr 2}}{\sO_{\pr 2}(-5)}),$$
and let $\mathcal V$ be the corresponding universal extension on $X\coloneqq \pr 2\times S$. For $\beta\in \R$, define by $\tau^\beta_s\coloneqq \tau_\mu^\beta$ on $X_s=\pr 2_s$, and let $\underline{\tau}^\beta$ be the corresponding fiberwise collection of $t$-structures on $X$ over $S$. Let $v\coloneqq 5\left[\sO(-3)[1]\right]$, and consider the Quot spaces $$\Quot_{X/S}^{\underline{\tau}^\beta}(\mathcal V,v)$$ 
for $\beta\in(-3,-2]$. The general extension class in $S$ is represented by a complex 
\begin{equation*}
V\coloneqq \sO(-3)^5 \to \sO(-2)^5.
\end{equation*}
Fixing a line $l$ on $\pr 2$ we construct below a two-dimensional locus $S'$ in $S$ representing extension classes of the form 
\begin{equation*}
W\coloneqq \sO(-4)\oplus \sO(-3)^2 \to \sO(-2)^2\oplus \sO(-1).
\end{equation*}
Applying $\Hom(\sO,-)$ to the short exact sequence  
\[\sO (-5) \xrightarrow{l} \sO(-4)\to \sO_{l}(-4)\]
gives rise to an injection $\C^3 \simeq \Ext^1(\sO,\sO_{l}(-4)) \to \Ext^2(\sO,\sO(-5))$. Since we have $\Ext^1(\sO,\sO_{l}(-4))\simeq \Ext^1_l(\sO_l,\sO_l(-4))$, consider the open locus $S''\subset  \Ext^1_l(\sO_l,\sO_l(-4))$ parametrizing general extensions.  A class $\psi\in S''$ corresponds to a complex
\[ \psi\colon \qquad \sO_{l}(-4)\to \sO_l(-2)^2 \to \sO_l.  \]
The image of $\psi$ through $\Ext^1_l(\sO_l,\sO_{l}(-4))\to \Ext^2(\sO,\sO(-5))$ has the form $W$. Indeed, the total complex obtained by resolving every sheaf of $\psi$:
\begin{center}
\begin{tikzcd}
\sO(-5) \rar\dar & \sO(-3)^2 \rar\dar & \sO(-1)\dar \\
 \sO(-4) \rar\dar & \sO(-2)^2 \rar\dar & \sO\dar \\
 \sO_{l}(-4) \rar & \sO_l(-2)^2\rar & \sO_l,
\end{tikzcd}
\end{center}
has $W$ as its middle part. Finally, let $S'$ be the image of $S''$ in $\PP(\Ext^2(\sO,\sO(-5)))$.

For $\beta\in (-3,-2]$, we have $V,W\in \Coh^\beta(\pr 2)$. One sees immediately that $V$ admits an obvious family of quotients of class $v$. We now exhibit a quotient of class $v$ for $W$, proceeding by cases.

\textit{Case $\beta < -\frac52$.} The complex $W$ has a quotient $\left(\sO(-4)\oplus \sO(-3)^2\right)[1]$. In turn, the Euler sequence yields a morphism $\alpha$ whose cokernel
\[\left(\sO(-4)\oplus \sO(-3)^2\right) \xrightarrow{\alpha} \sO(-3)^5 \to T(-4) \to 0 \]
is a twist of the tangent bundle of $\pr 2$ ($\alpha$ is the identity on the $\sO(-3)$ summands). The map $\alpha$ is an epimorphism in $\Coh^\beta(\pr 2)$ if and only if $T(-4)\in \sT^\beta$, which is the case for $\beta < -\frac52$.

\textit{Case $\beta \geq -\frac52$.} Now we have $T(-4)\in \sF^\beta$, so the map $\alpha$ constructed above is not an epimorphism. Let 
\[Q\coloneqq\left(\sO(-4)\oplus \sO(-3)^2 \oplus T(-4)\right)[1],\]
we will produce an epimorphism $\alpha'\colon W\to Q$ in $\Coh^\beta(\pr 2)$. It suffices to construct an epimorphism $$\gamma\colon\sO(-2)^2\oplus \sO(-1) \to T(-4)[1].$$
indeed, one can combine the exact sequence of complexes
\begin{equation}\label{ses W}
 0\to \sO(-2)^2\oplus \sO(-1)\xrightarrow{\delta} W \xrightarrow{\alpha_1} \left(\sO(-4)\oplus \sO(-3)^2\right)[1] \to 0
\end{equation}
and the vanishing of 
$$\Ext^1(\left(\sO(-4)\oplus \sO(-3)^2\right)[1], Q )=0$$
to show that $\gamma$ factors through $\delta$ and produces a map  $\alpha_2\colon W\twoheadrightarrow T(-4)[1]$. Since $\gamma=\alpha_2\circ \delta$ is an epimorphism, so is $\alpha_2$. Then, $\alpha'\coloneqq \alpha_1 \oplus \alpha_2$ is the desired epimorphism. The only map $\sO(-1)\to T(-4)[1]$ corresponds to a non-trivial extension
\[ T(-4) \to E \to \sO(-1). \]
Since $T(-4)\simeq \Omega^1(4)\otimes \det(T(-4)) \simeq \Omega^1(-1)$, we recognize the Euler sequence and write $E=\sO(-2)^3\in \sT^\beta$.
In other words, there is a short exact sequence in $\Coh^\beta(\pr 2)$ $$0\to \sO(-2)^{\oplus 3}\to \sO(-1)\xrightarrow{\gamma'} T(-4)[1]\to 0$$
and $\gamma$ can be defined to be the composition of $\gamma'$ with the projection on the second factor.

\begin{remark}
This example can be viewed as an instance of the existence part of the valuative criterion for Quot spaces, if we interpreted the base space to be $S\times (-3,-2]$, as in Remark \ref{rmk_s_times_beta}. With this interpretation, what we described is the family of quotients parametrized by a curve through a point in $S'\times \set{-\frac{5}{2}}$.
\end{remark}

\subsection{Point quotients of plane conics and Gieseker-type walls}
Before the next example, we introduce another interesting $t$-structure.

\begin{definition}
\label{def_Gieseker_tilt}
Let $Y$ a smooth projective variety of dimension $e$, fix a positive rational polynomial $\delta$ of degree $e-1$ and an ample class $H$. Then, define the categories:
\begin{equation*}
    \begin{split}
\sT^\delta=\set{E\in\Coh(Y)\colon \mbox{ for all quotients } E\twoheadrightarrow G\neq 0, \mbox{ one has } p_G>\frac{t^e}{e!} + \delta }\\
\sF^\delta=\set{E\in\Coh(Y)\colon \mbox{ for all subsheaves } 0\neq F\subset E, \mbox{ one has } p_F\leq \frac{t^e}{e!} + \delta }
    \end{split}
\end{equation*}
where $p_E$ is the reduced Hilbert polynomial of $E$ (with respect to $H$), and we set $p_E=+\infty$ if $E$ is torsion (\cite[Def. 1.2.3]{HL10}). The pair $(\sT^\delta,\sF^\delta)$ defines a torsion pair on $\Coh(Y)$.
The resulting abelian category $\pair {\sF^\delta[1],\sT^\delta} $ is denoted $\Coh^\delta(X)$, and we denote by $\tau_G^\delta$ the corresponding $t$-structure on $D^b(Y)$.
\end{definition}

Consider the following example. Let $S\coloneqq\Hom_{\pr 2}(\sO_{\pr2}(-2),\sO_{\pr 2})=\aff 6$, denote $X\coloneqq \pr 2\times S$ and let $V\in D(X)$ be the universal object. Let $\C_p$ be a skyscraper sheaf at $p\in\pr 2$, and $I_p$ the corresponding ideal sheaf.
Denote by $\sQ_S^\beta$ the Quot space
\[\Quot_{X/S}^{\underline{\tau}^\beta}(V,[\C_p])(S)\]
as $\beta$ varies in an interval $I\coloneqq(-1-\epsilon , -1+\epsilon)$.

If $0\neq\phi\in S$, then $V_\phi\simeq \sO_C$ for a conic $C$, and $\sQ^{\beta}_{S\setminus \set{0}}$ can be compactified as the universal conic 
$$\sC \to \PP(H^0(\pr 2, \sO(2))).$$
Suppose now that $\phi=0$. For $\beta > -1$ (which implies $\sO(-1)\in \sF^\beta$), we see that the only possible cohomology sequence of a quotient $V_0\to Q$ in $\sQ^\beta_S$ is
$$(**) \ \ \sO(-2) \to \sO(-2) \to 0 \to I_p \to \sO \to \C_p. $$
Indeed, $H^0(Q)$ has to be torsion, which forces $H^{-1}(Q)$ to also be torsion, hence zero. Nothing changes as $\beta$ crosses the wall at $-1$; the possible quotients are exactly the same and yield $\sQ_0^{\beta}\simeq \pr 2$. 

Notice that, by definition, $\Coh^\beta(X)$ does not distinguish between torsion free sheaves with the same rank and degree (e.g. between $\sO$ and $I_p$). Let $\delta^\pm= p_{I_p}\pm \epsilon $ and consider $\Coh^{\delta^\pm}(\pr 2)$. A quotient in $\Coh^{\delta^-}(\pr 2)$ is readily seen to agree with the sequence $(**)$. 

On the other hand, observe that $\sO_{\pr 2}\in\Coh^{\delta^+}(\pr 2)$ but $I_p\in\Coh^{\delta^+}(\pr 2)[-1]$, so none of the $(**)$ are quotients in $\Coh^{\delta^+}(\pr 2)$. They get replaced by sequences 
$$ 0\to \sO(-2) \xrightarrow{\psi\neq 0} I_p \to I_p\otimes \sO_C \xrightarrow{0} \sO \to \sO $$
which are parameterized by a $\pr 4$-bundle over $\pr 2$ whose fiber over $p\in\pr 2$ is the linear series $\abs{I_p(2)}$ of conics through $p$. This realizes the scheme parameterizing quotients in $\Coh^{\delta^+}(\pr 2)$ as the blowup of $\sQ_S^{-1+\epsilon}$ along $\sQ_0^{-1+\epsilon}$. Remarkably, this further step of the wall-crossing is not detectable using Mumford slope only. This principle will be applied in Section \ref{sec_HL_pairs_as_quotients}.

\section{Quot spaces on curves: stable pairs}\label{sec_Bradlow_pairs_on_curves}

The goal of this Section is to compare the moduli spaces of stable pairs introduced and studied, for example, in \cite{Bra91,Th94,Be97} with the Quot functors defined above. Let $C$ be a smooth curve of genus $g\geq 2$. A \textit{Bradlow pair} (or just \textit{pair} if no confusion arises in the context) is the datum of a vector bundle with a global section. 

\subsection{Bradlow pairs as quotients} 
\label{sec_Bradlow_pairs_as_quotients}
 We start with the following lemma:

\begin{lemma}[{\cite[Ex. 2.1.12]{HL10}}]
Let $F$, $G$ be coherent sheaves on a smooth projective variety $X$, and let $E=\EXT 1FG$. Then there is a universal sequence on $\PP(E)\times X$ (with projections $p$ and $q$ to $\PP(E)$ and $X$, respectively), which reads
\begin{equation}\label{univext}
0\to q^*F\otimes p^*\sO_{\PP(E)}(1) \to \sE \to q^*G \to 0.
\end{equation}
\end{lemma}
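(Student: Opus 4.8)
The plan is to reproduce the classical construction of a universal extension; the statement is \cite[Ex.~2.1.12]{HL10}, so the proof only assembles standard facts. Since $X$ is smooth projective and $F,G$ are coherent, $E=\EXT{1}{F}{G}$ is a finite-dimensional $\C$-vector space, and I may assume $E\neq 0$ (otherwise $\PP(E)=\varnothing$ and there is nothing to prove). Writing $r=\dim E$, so that $\PP(E)=\mathbb P(E^{\vee})\cong\pr{r-1}$, recall the tautological inclusion $\sO_{\PP(E)}(-1)\hookrightarrow E\otimes\sO_{\PP(E)}$; dualizing identifies $H^{0}(\PP(E),\sO_{\PP(E)}(1))$ with $E^{\vee}$, and under this identification the tautological inclusion itself is the element $\id_{E}\in E\otimes E^{\vee}=\Endo(E)$. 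This distinguished class is what will produce \eqref{univext}.

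Next I would exhibit the relevant extension group on the product. With the usual convention that $\EXT{1}{F}{G}$ parametrizes extensions of $F$ by $G$, the universal extension \eqref{univext} is the one attached to $\id_{E}$ under a canonical isomorphism
\[
\Ext^{1}_{\PP(E)\times X}\bigl(q^{*}F,\ q^{*}G\otimes p^{*}\sO_{\PP(E)}(1)\bigr)\;\cong\;\Endo(E).
\]
To obtain this isomorphism I would apply flat base change: $q$ is a projection, hence flat, and $F$ is perfect since $X$ is smooth, so $\RHHom_{\PP(E)\times X}(q^{*}F,q^{*}G)\cong q^{*}\RHHom_{X}(F,G)$; flat base change along $p\colon\PP(E)\times X\to\PP(E)$ then identifies $\dR p_{*}$ of this complex with the complex of trivial bundles $\RHom_{X}(F,G)\otimes_{\C}\sO_{\PP(E)}$ on $\PP(E)$. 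Twisting by $p^{*}\sO_{\PP(E)}(1)$ and feeding the result into the local-to-global spectral sequence, the only class of total degree $1$ comes from $H^{0}(\PP(E),\sO_{\PP(E)}(1))\otimes\EXT{1}{F}{G}=E^{\vee}\otimes E$: the term $H^{1}(\PP(E),\sO_{\PP(E)}(1))\otimes\HOM{F}{G}{X}$ vanishes and no differential meets the surviving term, since $H^{1}(\pr{r-1},\sO(1))=H^{2}(\pr{r-1},\sO(1))=0$. I then let $\sE$ be the extension corresponding to $\id_{E}$.

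Finally I would justify the adjective ``universal.'' For a point $[\xi]\in\PP(E)$, i.e.\ a line $\C\xi\subset E$, restrict \eqref{univext} to $\{[\xi]\}\times X\cong X$; the two outer sheaves are flat over $\PP(E)$ (one a pullback from $X$, the other such a pullback twisted by a line bundle), so the restriction stays short exact, and its class is the image of $\id_{E}$ under $\Endo(E)=E^{\vee}\otimes E\xrightarrow{\operatorname{ev}_{[\xi]}\otimes\id}\C\otimes E=E$, namely $\xi$ (once $\sO_{\PP(E)}(1)|_{[\xi]}$ is trivialized). Thus the fiber of \eqref{univext} over $[\xi]$ is the extension determined by $\xi$, as wanted. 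The only step that needs genuine care is the flat base change computation of the $\Ext^{1}$ group on $\PP(E)\times X$: one must check that $\HOM{F}{G}{X}$ and the spectral sequence differentials contribute nothing, which is exactly what the vanishing of $H^{1}$ and $H^{2}$ of $\sO_{\pr{r-1}}(1)$ guarantees; everything else is bookkeeping.
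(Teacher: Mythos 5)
Your argument is correct, and it is the standard construction; the paper itself offers no proof of this lemma, deferring entirely to \cite[Ex.~2.1.12]{HL10}, so your write-up simply supplies the argument that the citation stands in for. The key computation — pushing $\RHHom(q^*F,q^*G)\otimes p^*\sO_{\PP(E)}(1)$ forward to $\PP(E)$ via flat base change (using that $F$ is perfect because $X$ is smooth) and then killing the unwanted terms with $H^1(\pr{r-1},\sO(1))=H^2(\pr{r-1},\sO(1))=0$ to get $\Ext^1_{\PP(E)\times X}\cong E^\vee\otimes E$ — is the right one, and the fibrewise identification of the restricted class with $\xi$ after trivializing $\sO_{\PP(E)}(1)|_{[\xi]}\cong(\C\xi)^\vee$ is exactly what justifies the word ``universal.'' Two remarks. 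First, what your construction actually produces is the sequence $0\to q^*G\otimes p^*\sO_{\PP(E)}(1)\to\sE\to q^*F\to 0$, with the \emph{second} argument of $\EXT{1}{F}{G}$ as the twisted subobject; this agrees with the convention in \cite{HL10} and with the way the paper later uses the lemma (in the proof of Lemma~\ref{lem-restricting_line_bundles_to_fibers} the subobject on $F^-_n\times C$ is $q^*\sO_C(A)$, where $E=\EXT{1}{\sO_C(D-A)}{\sO_C(A)}$), but it is the transpose of the displayed sequence \eqref{univext}. So you have proved the corrected statement rather than the literal one; the discrepancy is a typo in the lemma as printed, not a gap in your proof, but it is worth saying explicitly which convention for $\Ext^1$ you are using. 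Second, a naming quibble: the spectral sequence you invoke on $\PP(E)$ is the hypercohomology spectral sequence of the complex $\RHom_X(F,G)\otimes_{\C}\sO_{\PP(E)}(1)$, not the local-to-global Ext spectral sequence on the product (which would require a further K\"unneth decomposition); the vanishing you use is the right one in either formulation.
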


Bradlow pairs admit stability conditions with respect to a parameter $\sigma\in \R$:

\begin{definition}\label{def_stable_bradlow_pair}
A Bradlow pair $(E,\phi\in H^0(E))$ is (semi)stable with respect to $\sigma$ if for all $F\subseteq E$:
\begin{enumerate}[(i)]
    \item $\mu(F) <(\leq) \mu(E) - \sigma\left(\frac{1}{\rk(F)}-\frac{1}{\rk(E)}\right)$ if $\phi\in H^0(F)$;\label{itm_bradlow_pair_factors}
    \item $\mu(F) <(\leq) \mu(E) + \sigma\left(\frac{1}{\rk(E)} \right)$  if $\phi\notin H^0(F)$.\label{itm_bradlow_pair_non_factors}
\end{enumerate}
\end{definition}

Fix a Chern class $(r,d)$ with $r,d\in\Z$ and a parameter $\beta\geq \frac{d}{r}$. Then we have:

\begin{lemma}\label{lem_Bradlow_pairs_are_quotients}
Suppose $E$ is a vector bundle on $C$ with $\ch{}(E)=(r,d)$, and $\sO_C\xrightarrow{\phi} E$ is a stable Bradlow pair with respect to $\sigma\coloneqq \beta r-d$. Then $\phi[1]\colon\sO_C[1]\to E[1]$ is an epimorphism in $\Coh^\beta(C)$. Conversely, an epimorphism $\sO_C[1]\to E[1]$ determines a semistable Bradlow pair.
\end{lemma}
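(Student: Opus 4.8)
The plan is to reduce the statement, in both directions, to a numerical comparison, via the standard description of epimorphisms in a tilted heart. Set $Q := \coker(\phi) = E/\phi(\sO_C)$. Since $\Hom_{D(C)}(\sO_C[1],E[1]) = \Hom_{D(C)}(\sO_C,E) = H^0(E)$, every morphism $\sO_C[1]\to E[1]$ in $\Coh^\beta(C)$ has the form $\phi[1]$ for a unique $\phi\in H^0(E)$, and it is nonzero iff $\phi\neq 0$; when $\phi\neq 0$ it is injective ($C$ is a curve), so $\phi$ is a genuine nonzero section and $\mathrm{cone}(\phi[1])\simeq Q[1]$. Using that a morphism $f$ in the heart of a $t$-structure is an epimorphism precisely when $\mathrm{cone}(f)$ lies in the heart shifted by $[1]$, I get: $\phi[1]$ is an epimorphism in $\Coh^\beta(C)$ iff $\sO_C,E\in\sF^\beta$ (so that $\sO_C[1],E[1]\in\Coh^\beta(C)$) and $Q\in\Coh^\beta(C)$; and since $Q$ is a sheaf in degree $0$, the last condition is just $Q\in\sT^\beta$. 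So it remains to match the pair of conditions ``$E\in\sF^\beta$ and $Q\in\sT^\beta$'' with the stability of $(E,\phi)$.

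The engine of the dictionary is the identity $d+\sigma = \beta r$, i.e. $\mu(E)+\sigma/r = \beta$, which is the defining relation $\sigma = \beta r - d$. Since $\beta\ge d/r = \mu(E)$ we have $\sigma\ge 0$; hence if $(E,\phi)$ is $\sigma$-stable then every subsheaf $F\subseteq E$ satisfies $\mu(F)\le\beta$ --- if $\phi\in H^0(F)$ this comes from $\mu(F)< \mu(E)-\sigma(1/\rk F - 1/\rk E)\le\mu(E)\le\beta$, and if $\phi\notin H^0(F)$ from $\mu(F)<\mu(E)+\sigma/r = \beta$ --- so $E\in\sF^\beta$, and then $\phi(\sO_C)\subseteq E$ forces $\sO_C\in\sF^\beta$. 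Conversely, the membership $E\in\sF^\beta$ is literally the condition that every subsheaf $F$ obeys inequality (ii) with ``$\le$''. Thus, in both directions, ``$E\in\sF^\beta$'' is equivalent to condition (ii) of Definition \ref{def_stable_bradlow_pair}.

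For condition (i) I match it with ``$Q\in\sT^\beta$''. Subsheaves $F\subseteq E$ with $\phi\in H^0(F)$ are precisely those containing $\phi(\sO_C)$, and for such $F$ the quotient $E/F$ is a quotient of $Q$; conversely every quotient of $Q$ is of this form. A short computation using $\rk F+\rk(E/F)=r$, $\deg F+\deg(E/F)=d$ and $d+\sigma=\beta r$ shows that, for $F\subsetneq E$ with $\rk(E/F)\ge 1$, the inequality $\mu(F)<\mu(E)-\sigma(1/\rk F - 1/\rk E)$ is equivalent to $\mu(E/F)>\beta$; if $\rk(E/F)=0$ then $E/F$ is a nonzero torsion sheaf (of ``slope'' $+\infty>\beta$) and $1/\rk F - 1/\rk E = 0$, so (i) reads $\mu(F)<\mu(E)$, which holds because $\deg(E/F)>0$; and $F=E$ gives the trivial equality. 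Hence $Q\in\sT^\beta$ (every nonzero quotient of $Q$ has slope $>\beta$) is equivalent to (i) holding, with strict inequality, for all $F\subsetneq E$ containing $\phi(\sO_C)$. Assembling: a $\sigma$-stable pair yields $E\in\sF^\beta$ and $Q\in\sT^\beta$, hence $\phi[1]$ is an epimorphism; an epimorphism $\phi[1]$ yields $E\in\sF^\beta$ (so (ii) holds) and $Q\in\sT^\beta$ (so (i) holds), hence $(E,\phi)$ is $\sigma$-semistable --- only semistable, since $E\in\sF^\beta$ permits $\mu(F)=\beta$, i.e. equality in (ii).

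None of this is deep; the points to watch are (a) establishing $E\in\sF^\beta$ in the forward direction --- which is exactly where $\sigma\ge 0$ and the identity $\mu(E)+\sigma/r=\beta$ are used, and without which ``epimorphism in $\Coh^\beta(C)$'' would not even be meaningful; (b) the torsion contributions when $\phi(\sO_C)$ is not saturated in $E$, absorbed by the fact that torsion sheaves lie in $\sT^\beta$; and (c) the asymmetry in strictness, which is the reason the converse produces semistable rather than stable pairs.
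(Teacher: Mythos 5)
Your proof is correct and follows essentially the same route as the paper's: both directions reduce to checking $E\in\sF^\beta$ (via $\sigma\ge 0$ and either stability condition) and $\coker(\phi)\in\sT^\beta$ (via the numerical identity $\mu(E)+\sigma/r=\beta$ applied to subsheaves containing $\phi(\sO_C)$), with the loss of strictness in the converse accounting for semistability. You spell out a few details the paper leaves implicit (the cone criterion for epimorphisms, the torsion-quotient case, and $\sO_C\in\sF^\beta$), but the argument is the same.
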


\begin{proof}
Let $Q$ be a quotient of $\coker(\phi)$, and let $F\subset E$ be the kernel of the composition $$E\to\coker(\phi)\to Q.$$
Then $\phi$ factors through $F$. Condition \eqref{itm_bradlow_pair_factors} of Definition \ref{def_stable_bradlow_pair}, after substituting $\deg(F)=\deg(E)-\deg(Q)$ and $\rk(F)=\rk(E)-\rk(Q)$ everywhere, rewrites as $\mu(Q)>\mu(E)+ \frac{\sigma}{r}=\beta$. So $\coker(\phi)\in\sT^\beta$.

If $F$ is a subsheaf of $E$, then one among conditions \eqref{itm_bradlow_pair_factors} or \eqref{itm_bradlow_pair_non_factors} holds. Because we chose $\beta\geq \mu(E)$ we have $\sigma\geq 0$, so either condition implies $\mu(F)<\beta$, hence $E\in\sF^\beta$, and $\phi[1]$ is an epimorphism in $\Coh^\beta(C)$.
Reversing this argument we only get non-strict inequalities from the condition $E\in\sF^\beta$, so we obtain a semistable Bradlow pair. 
\end{proof}

We now follow the approach of \cite{Be97} and describe rank 2 Bradlow pairs directly in the framework of Quot schemes in a tilt. Let $v=(2,d)$ be the Chern character of a vector bundle of rank 2 and degree $d\geq 0$, and denote by $\sQ^\beta(v) \subset \Quot_{C/\text{pt}}^{\underline {\tau}^\beta}(\sO_C[1],-v)(\text{pt})$ the space of quotients whose determinant bundle is a fixed $\sO_C(D)$ (see Def. \ref{def_quot_space}). The results of this section can be summarized in the following theorem:

\begin{thm}\label{thm_summary_curves}
Let $\sigma=2\beta -d$, and denote by $\mathcal{M}_\sigma^{(s)s}(2,D)$ the moduli spaces of $\sigma$-(semi)stable Bradlow pairs of rank 2 and determinant $\sO_C(D)$. If $\mathcal{M}_\sigma^{ss}(2,D)\simeq \mathcal{M}_\sigma^{s}(2,D)$, then there is an isomoprhism
\[ \sQ^{\beta}(v)\simeq  \mathcal{M}_\sigma^{s}(2,D). \]
Otherwise, $\sQ^{\beta}(v)\simeq  \mathcal{M}_{\sigma'}^{s}(2,D)$, where $\sigma'=\sigma +\epsilon$ for some $0<\epsilon \ll 1$.
Moreover, there is a sequence of moduli spaces and birational maps as in diagram \eqref{eq_thaddeus_diagram} induced by interpolating determinant line bundles in $\Pic(\sQ^\beta)$. 
\end{thm}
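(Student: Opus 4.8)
The plan is to establish Theorem \ref{thm_summary_curves} in three stages: first the set-theoretic/functorial identification $\sQ^\beta(v)\simeq \mathcal M_\sigma^s(2,D)$, then a scheme-theoretic upgrade, and finally the construction of the wall-crossing diagram \eqref{eq_thaddeus_diagram} via determinant line bundles.

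\textbf{Step 1: Identification of points and families.} Lemma \ref{lem_Bradlow_pairs_are_quotients} already provides the dictionary on closed points: an epimorphism $\sO_C[1]\to E[1]$ in $\Coh^\beta(C)$ with $[E[1]]=-v$ is, up to scaling the section, exactly a $\sigma$-semistable BTB-pair $(E,\phi)$, and conversely. I would first check that fixing the determinant $\det E\simeq \sO_C(D)$ is a closed condition, so $\sQ^\beta(v)$ is a well-defined closed subscheme of the Quot space. Then I need to run this equivalence in families: given a $T$-flat family of quotients $\sO_{C_T}[1]\to \sE$ with respect to $\underline\tau^\beta$, the long exact cohomology sequence shows $\sE$ is concentrated in degree $-1$ with $H^{-1}(\sE)=:E_T$ a $T$-flat family of sheaves (flatness over $T$ follows because each fiber lies in $\Coh^\beta(C)$ and has the right numerical class, forcing it to be a shifted vector bundle), and the connecting map gives $\phi_T\colon\sO_{C_T}\to E_T$. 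This produces a natural transformation from the Quot functor to the moduli functor of BTB-pairs; the inverse transformation uses Lemma \ref{lem_Bradlow_pairs_are_quotients} fiberwise together with the universal section. The subtlety flagged in the theorem — the distinction between $\mathcal M^{ss}$ and $\mathcal M^s$ — arises because the tilt only sees the non-strict inequality $E\in\sF^\beta$; when $\sigma$ is a wall one gets strictly semistable pairs, and I would handle this exactly as stated by perturbing $\sigma\mapsto\sigma+\epsilon$, i.e. noting that $\Coh^\beta(C)$ and $\Coh^{\beta+\epsilon'}(C)$ have the same quotients of $\sO_C[1]$ of class $-v$ when no genuine wall is crossed, so $\sQ^\beta(v)$ agrees with the GIT-stable locus at the nearby chamber.

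\textbf{Step 2: Scheme structure.} Having matched the functors of points, I would invoke that both sides represent their functors — the Quot side by Proposition 11.6 of \cite{BLMNPS19} (with boundedness/properness supplied by Proposition \ref{prop_properness} and the noetherianity of $\Coh^\beta(C)$ for $\beta\in\Q$, Remark \ref{rmk_noetherianity}), and the BTB side by the GIT construction of Thaddeus/Bertram. An isomorphism of functors then gives an isomorphism of the representing schemes. For the explicit geometry (following \cite{Be97}), I would use the universal extension sequence \eqref{univext} with $F=\sO_C$, $G=\sO_C(D)\otimes(\text{line bundle})$ ranging over $\Pic^{d'}(C)$: rank-2 pairs $(E,\phi)$ with $\det E=\sO_C(D)$ and torsion-free cokernel are classified by $\PP(\Ext^1(\sO_C(D-L),L))$ over the Picard variety, and pairs whose section vanishes are blown up/down as $\beta$ varies, reproducing Thaddeus's flip description.

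\textbf{Step 3: Determinant line bundles and the diagram.} On the universal quotient over $\sQ^\beta(v)\times C$ I would form the determinant line bundle $\lambda(u)$ à la Knudsen–Mumford for suitable classes $u\in K(C)$ orthogonal to $v$, obtaining a $\R$-parametrized ray of line bundles in $\Pic(\sQ^\beta(v))$; as the determinant parameter interpolates between the natural boundary classes, the associated morphisms (Iitaka-type maps) contract precisely the loci where the stability of the corresponding pair changes, giving the moduli spaces $\sQ^{\frac d2}(v),\dots,\sQ^{d-1}(v)$ and the birational maps $\phi_i$. Matching these with Thaddeus's wall-crossing is then a comparison of the two one-parameter families: both are governed by the same walls $\sigma\in\{d-1,d-2,\dots\}$ (equivalently $\beta\in\{\tfrac{d-1}{2},\dots\}$), and the flip centers — which on the Thaddeus side are projective bundles over symmetric products $C^{(k)}$ — match the loci in $\sQ^\beta(v)$ where the quotient acquires a torsion summand, computed via the cohomology sequences as in the examples of Section \ref{sec_first_examples}.

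\textbf{Main obstacle.} I expect the hard part to be Step 2 in the direction of proving that the natural transformation of functors is an isomorphism \emph{scheme-theoretically} rather than just bijective on closed points — i.e. controlling $T$-flatness for non-reduced $T$. One must show that a $T$-flat family of tilt-quotients of $\sO_{C_T}[1]$ really does descend to a $T$-flat family of BTB-pairs (and vice versa), which requires knowing that the cohomology-sheaf operation $\sE\mapsto H^{-1}(\sE)$ commutes with base change here; this hinges on the vanishing of the other cohomology sheaf in every fiber, which in turn is where the numerical rigidity of the class $v$ (rank $2$, fixed degree, on a curve) does the work. A secondary difficulty is the positivity/semiampleness of the determinant line bundles needed to guarantee that the maps $\phi_i$ are genuine morphisms of projective schemes and not merely rational maps — this is the point the introduction explicitly defers, so for the theorem as stated I would only need enough positivity to identify the birational maps with Thaddeus's, citing \cite{Th94} for the projectivity of the targets.
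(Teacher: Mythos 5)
Your proposal follows essentially the same route as the paper: the pointwise dictionary of Lemma \ref{lem_Bradlow_pairs_are_quotients} (with the stable/semistable asymmetry absorbed by the perturbation $\sigma\mapsto\sigma+\epsilon$), the boundary and wall analysis of Propositions \ref{prop_boundary_cases} and \ref{prop_descr_of_walls}, and the determinant-line-bundle computations of Section \ref{sec_determinant_bundles} for diagram \eqref{eq_thaddeus_diagram}. If anything you are more explicit than the paper about the family-level issues (base change of $H^{-1}$, $T$-flatness for non-reduced $T$), which the paper's two-line proof leaves implicit; your identification of that as the delicate point is apt, and your resolution via the vanishing of $H^0$ of any quotient of $\sO_C[1]$ is the correct one.
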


\begin{proof}
The first part of this result is a combination of Lemma \ref{lem_Bradlow_pairs_are_quotients} with Propositions \ref{prop_boundary_cases} and \ref{prop_descr_of_walls}.
The second part of the statement follows from the computations in Sec. \ref{sec_determinant_bundles}.
\end{proof}

\begin{proposition}[Boundary cases]\label{prop_boundary_cases}
\begin{enumerate}[(i)]
\item The scheme $\sQ^\beta(v)$ is non-empty only if the parameter $\beta$ ranges in the interval $[\mu(E), \mu(\sO_C(D)))=\left[\frac d2,d\right)$. 
\item If $\beta\in \left[\frac d2, \ceil{\frac{d+1}{2}} \right)$, for all quotients $\sO_C\to E$ in $\sQ^\beta(v)$, $E$ is semistable. \label{itm_map_Qd2_to_semistable}
\item If $\beta\in \left[d-1, d \right)$, then $\sQ^\beta(v)=\PP(\Ext^1(\sO_C(D),\sO_C))$. \label{itm_map_Qd-1_to_determinants}
\end{enumerate}
\end{proposition}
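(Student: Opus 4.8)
The plan is to argue with the description of closed points of $\sQ^\beta(v)$ extracted from the proof of Lemma~\ref{lem_Bradlow_pairs_are_quotients}: such a point is a nonzero (hence injective) map $\phi\colon\sO_C\to E$ with $E$ of rank $2$ and $\det E\cong\sO_C(D)$, subject to $E\in\sF^\beta$ (equivalently $E[1]\in\Coh^\beta(C)$) and $\coker\phi\in\sT^\beta$ (equivalently $\phi[1]$ is an epimorphism there). Note that $E$ is automatically a vector bundle, since a nonzero torsion subsheaf would have infinite slope and contradict $E\in\sF^\beta$. I would fix once the saturation $L'\subseteq E$ of $\phi(\sO_C)$: writing $Z$ for the vanishing divisor of $\phi$ one has $L'\cong\sO_C(Z)$ with $\deg L'=\deg Z\geq0$, and $E/L'$ is a line bundle of degree $d-\deg Z$.

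For (i), testing $E\in\sF^\beta$ against the subobject $F=E$ gives $\tfrac d2=\mu(E)\leq\beta$; and since $E/L'$ is a quotient of $\coker\phi\in\sT^\beta$ we get $\beta<\mu(E/L')=d-\deg Z\leq d$, so $\beta\in[\tfrac d2,d)$. For (ii), I would use only $E\in\sF^\beta$: a full-rank subsheaf $F\subseteq E$ has $\deg F\leq d$, hence $\mu(F)\leq\mu(E)$; a rank-$1$ subsheaf $F$ satisfies $\deg F\leq\deg\bar F\leq\beta$ for its saturation $\bar F$ (a sub-line-bundle), and as $\deg F\in\Z$ and $\beta<\ceil{\tfrac{d+1}{2}}$ this forces $\deg F\leq\ceil{\tfrac{d+1}{2}}-1\leq\tfrac d2=\mu(E)$, by a one-line check on the parity of $d$. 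Hence nothing destabilizes $E$, i.e.\ $E$ is semistable.

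For (iii), when $\beta\geq d-1$ the inequality $\beta<d-\deg Z$ above forces $\deg Z=0$, so $\phi$ is nowhere vanishing and $0\to\sO_C\xrightarrow{\phi}E\to\sO_C(D)\to0$ is exact (the cokernel being the degree-$d$ line bundle $\det E$); it is non-split, for otherwise $\sO_C(D)\hookrightarrow E$ would contradict $E\in\sF^\beta$ since $d>\beta$. Conversely, any non-split such extension gives a point of $\sQ^\beta(v)$: $E\in\sF^\beta$ because a sub-line-bundle mapping nontrivially to $\sO_C(D)$ has degree $\leq d-1\leq\beta$, one contained in $\sO_C$ has degree $\leq0\leq\beta$, and a full-rank subsheaf has slope $\leq\tfrac d2\leq\beta$; and $\coker\phi=\sO_C(D)\in\sT^\beta$ since every quotient of this degree-$d$ line bundle has slope $>\beta$. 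This identifies the closed points of $\sQ^\beta(v)$ with non-split extensions of $\sO_C(D)$ by $\sO_C$ up to scalar, i.e.\ with $\PP(\Ext^1(\sO_C(D),\sO_C))$. To upgrade this to a scheme isomorphism I would rerun the argument in families — $T$-flatness of $Q$ forces $Q\simeq\mathcal E[1]$ for a $T$-flat vector bundle $\mathcal E$ fitting in $0\to\sO_{C_T}\to\mathcal E\to\mathcal L\to0$ with $\mathcal L$ a twist of $\sO_C(D)$ by a line bundle from $T$ — and match the resulting functor with the one represented by $\PP(\Ext^1(\sO_C(D),\sO_C))$ through the universal extension of \cite[Ex.~2.1.12]{HL10}.

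I expect the real work to be in (iii): the statement on closed points is immediate, but the scheme-theoretic identification has to track the line-bundle twist on the base that shows up when passing between the Quot functor, where the source $\sO_{C_T}[1]$ is rigid, and the universal extension, whose sub-object is $\sO_C$ twisted by $\sO(1)$; this is precisely the bookkeeping carried out in \cite[Ex.~2.1.12]{HL10}, so a careful application should close it. One should also note that for the small values $d\leq1$ the interval $[d-1,d)$ in (iii) is to be read together with the range of (i), so that $E\in\sF^\beta$ still holds by testing against $F=E$.
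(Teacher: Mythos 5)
Your proposal is correct and follows essentially the same route as the paper's (much terser) proof: bound $\beta$ by testing $E\in\sF^\beta$ and the line-bundle quotient of $\coker\phi\in\sT^\beta$, deduce semistability from closure of $\sF^\beta$ under subobjects plus integrality of degrees, and identify the quotients for $\beta\in[d-1,d)$ with non-split extensions of $\sO_C(D)$ by $\sO_C$. The extra details you supply (saturation of $\phi(\sO_C)$, non-splitness, the converse in (iii), and the family-level bookkeeping via the universal extension) are all consistent with, and more careful than, what the paper records.
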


\begin{proof}
If $\beta\geq d$, then the cokernel of $\sO_C\to E$ does not belong to $\sT^\beta$. Similarly, for $\beta < \frac d2$ we have that $E\not\in \sF^\beta$. 

If $\beta\in \left[\frac d2, \ceil{\frac{d+1}{2}} \right)$, all subobjects $F\subset E$ have slope smaller than or equal to $\frac d2$, because $\sF^\beta$ is closed under taking subobjects. Therefore $E$ is semistable.

Let $\beta\in \left[d-1, d \right)$ and consider an object $\sO_C\to E \to K$ in $\sQ^\beta(v)$. Then the cokernel $K$ is isomorphic to $\sO_C(D)$, and such sequences are parametrized by $\PP(\EXT{1}{\sO_C(D)}{\sO_C})$.
\end{proof}

\begin{proposition}[Description of walls]\label{prop_descr_of_walls}
Denote by $\Gamma$ the set $\Gamma=\left[\frac d2,d\right)\cap \Z$.  
\begin{enumerate}[(i)]
\item We have an isomorphism $\sQ^{\beta_1}(v)\simeq \sQ^{\beta_2}(v)$ if and only if $\beta_1$ and $\beta_2$ lie in a subinterval $\left[ c,c+1 \right)$, $c\in \Gamma$, or in $\left[\frac d2, \ceil{\frac{d+1}{2}} \right)$.
\item Suppose $c=d-n\in \Gamma$ and $\beta^- < c \leq \beta^+$ are real numbers in two neighboring intervals among the ones above. Denote by $W_c^\pm$ the subscheme of $\sQ^{\beta^\pm}(v)$ parametrizing $c$-critical objects. Then there exists an effective divisor $A\in \Pic(C)$ of degree $n$, and a diagram
\begin{center}
\begin{tikzcd}
 \sQ^{\beta^-}(v) \arrow[dashed, "\phi_c"]{r} & \sQ^{\beta^+}(v)  \\
  W^-_n \arrow[draw=none]{u}[sloped,auto=false]{\subset} \arrow[dr, "f^-_n"] & W^+_n \arrow[draw=none]{u}[sloped,auto=false]{\subset}\arrow[d, "f^+_n"] \\
 & \sQ^{\beta^-}(-(1,n))
\end{tikzcd}
\end{center}
where $\phi_c$ is an isomorphism away from the critical loci, $f^-$ and $f^+$ are projective bundles of rank $g-2+d-2n$ and $n-1$ respectively, with fiber $F^-=\PP(\EXT{1}{\sO_C(D-A)}{\sO_C(A)})$ and $F^+=\PP(\EXT{1}{\sO_{A}}{\sO_C(D-A)})$.
\end{enumerate}
\end{proposition}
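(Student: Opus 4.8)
Below is the strategy I would follow.

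\medskip

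The plan is to translate the problem into the language of pairs and reduce both parts to the combinatorics of two integer invariants. By Lemma~\ref{lem_Bradlow_pairs_are_quotients} (and the analysis of cohomology sequences in its proof), a $\C$-point of $\sQ^\beta(v)$ is the same datum as an isomorphism class of pairs $(E,\phi)$ with $E$ a rank $2$ bundle of degree $d$, $\det E\simeq\sO_C(D)$, and $0\neq\phi\in H^0(E)$, subject to $E\in\sF^\beta$ and $\coker\phi\in\sT^\beta$; equivalently, the $\sigma$-semistable BTB-pairs with $\sigma=2\beta-d$. Writing $d_{\max}(E)$ for the largest degree of a line subbundle of $E$ and $a(\phi)$ for the degree of the vanishing divisor of $\phi$ (so $a(\phi)\le d_{\max}(E)$), one checks that for $\beta\ge\tfrac d2$ one has $E\in\sF^\beta\iff d_{\max}(E)\le\beta$ (a worst-slope subsheaf of $E$ is a line subbundle, and $\mu(E)=\tfrac d2\le\beta$) and $\coker\phi\in\sT^\beta\iff d-a(\phi)>\beta$ (the unique minimal-slope quotient of the rank-one sheaf $\coker\phi$ is $E/\langle\phi\rangle^{\mathrm{sat}}$, of degree $d-a(\phi)$). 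Since $d_{\max}(E),a(\phi)\in\Z$, the condition $d_{\max}(E)\le\beta<d-a(\phi)$ depends on $\beta$ only through $\lfloor\beta\rfloor$, and the same holds in families, so the defining functors of $\sQ^{\beta_1}(v)$ and $\sQ^{\beta_2}(v)$ coincide whenever $\lfloor\beta_1\rfloor=\lfloor\beta_2\rfloor$; as $\lfloor\cdot\rfloor$ is constant on each subinterval $[c,c+1)$ with $c\in\Gamma$ and on $[\tfrac d2,\lceil\tfrac{d+1}{2}\rceil)$, this gives the ``if'' direction of (i) on the nose. For the ``only if'' I would appeal to part (ii), which exhibits a non-trivial birational modification between spaces in distinct subintervals.

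For (ii), fix $c=d-n\in\Gamma$ and $\beta^-<c\le\beta^+$ in neighboring subintervals; then $c\ge\lceil\tfrac{d+1}{2}\rceil$, so $n\le c-1$ and $2c-d\ge1$, facts I will use repeatedly. The criteria above say $(E,\phi)\in\sQ^{\beta^-}(v)$ iff $d_{\max}(E)\le c-1$ and $a(\phi)\le n$, and $(E,\phi)\in\sQ^{\beta^+}(v)$ iff $d_{\max}(E)\le c$ and $a(\phi)\le n-1$. Hence the two spaces share the open locus $\{d_{\max}(E)\le c-1,\ a(\phi)\le n-1\}$, and their differences are the $c$-critical loci
\[
 W^-_c=\{(E,\phi)\in\sQ^{\beta^-}(v):a(\phi)=n\}, \qquad W^+_c=\{(E,\phi)\in\sQ^{\beta^+}(v):d_{\max}(E)=c\},
\]
each closed, being the top value of an upper-semicontinuous invariant. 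The identity on the common locus is the rational map $\phi_c$, and once the dimension counts below show $W^\pm_c$ have positive codimension, $\phi_c$ is genuinely birational and an isomorphism away from $W^\pm_c$, which also settles the ``only if'' of (i).

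Next I would realize $W^-_c$ as a projective bundle over $\sQ^{\beta^-}(-(1,n))$. The same translation identifies $\sQ^{\beta^-}(-(1,n))$ with the symmetric product $C^{(n)}$: its points are the rank-one quotients $\sO_C[1]\twoheadrightarrow\sO_C(A)[1]$ with torsion cokernel $\sO_A$, i.e.\ effective divisors $A$ of degree $n$, and the universal section matches scheme structures. If $(E,\phi)\in W^-_c$, then $A=\mathrm{div}(\phi)$ has degree $n$, $\sO_C(A)\subseteq E$ is a line subbundle, $E/\sO_C(A)\simeq\sO_C(D-A)$ has degree $c$, and the extension $0\to\sO_C(A)\to E\to\sO_C(D-A)\to0$ is non-split; conversely, any non-split such extension has $d_{\max}(E)\le c-1$, since a line subbundle of $E$ either lies in $\sO_C(A)$ (degree $\le n\le c-1$) or maps injectively and non-isomorphically to $\sO_C(D-A)$ (degree $\le c-1$). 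Applying the universal extension of \cite[Ex.~2.1.12]{HL10} to the universal divisor $\mathcal A\subset C^{(n)}\times C$ exhibits $W^-_c$ as $\PP(\mathcal E^-)$, where $\mathcal E^-$ is the relative $\mathrm{Ext}$-sheaf on $C^{(n)}$ with fibre $\EXT1{\sO_C(D-A)}{\sO_C(A)}$ at $A$, and $f^-_n$ is the bundle projection; this sheaf is locally free because $\deg(2A-D)=2n-d<0$ forces $h^0(\sO_C(2A-D))=0$, so $h^1(\sO_C(2A-D))=g-1+d-2n$ is constant in $A$. Hence $f^-_n$ is a projective bundle of fibre dimension $g-2+d-2n$ with fibre $\PP(\EXT1{\sO_C(D-A)}{\sO_C(A)})$, as claimed.

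Finally I would treat $W^+_c$, which I expect to be the main obstacle. If $(E,\phi)\in W^+_c$, then $d_{\max}(E)=c>\tfrac d2$, so $E$ has a \emph{unique} sub-line-bundle $\sO_C(A_{\max})$ of degree $c$, with quotient line bundle $G=E/\sO_C(A_{\max})$ of degree $n$; the composite $\bar\phi\colon\sO_C\to E\to G$ must be non-zero, for otherwise $\phi$ factors through $\sO_C(A_{\max})$ and $a(\phi)=c>n-1$. Thus $B:=\mathrm{div}(\bar\phi)$ is effective of degree $n$, $G\simeq\sO_C(B)$, $\sO_C(A_{\max})\simeq\sO_C(D-B)$ by the determinant condition, and $(E,\phi)$ is recovered from an extension of $\sO_C(B)$ by $\sO_C(D-B)$ together with a lift of the tautological section $s_B$. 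This gives the morphism $f^+_n\colon W^+_c\to C^{(n)}=\sQ^{\beta^-}(-(1,n))$, $(E,\phi)\mapsto B$, and one shows (this is the subtle point) that its fibre over $B$ is $\PP(\EXT1{\sO_B}{\sO_C(D-B)})$ — of dimension $n-1$, because for any line bundle $L$ one has $\HHom(\sO_B,L)=0$ and $\EEXT1{\sO_B}{L}\simeq\sO_B\otimes L(B)$ of length $n$, whence $\EXT1{\sO_B}{L}\simeq H^0(\sO_B\otimes L(B))$ is $n$-dimensional independently of $B$ — so globalizing over $C^{(n)}$ makes $f^+_n$ a projective bundle of fibre dimension $n-1$. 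The points requiring the most care are: pinning down exactly which extension-plus-lift data reconstructs $(E,\phi)$ and producing the inverse morphism; checking that the conditions $a(\phi)\le n-1$ and $d_{\max}(E)=c$ behave as asserted along the fibre (they are open, generically satisfied); and verifying that the natural structures of $W^\pm_c$ as degeneracy loci inside $\sQ^{\beta^\pm}(v)$ agree with the projective-bundle structures — the $\beta^-$-side being clean, as it only involves universal extensions of line bundles, whereas on the $\beta^+$-side the vanishing of $\phi$ interacts with the destabilizing sub-line-bundle. Assembling $f^-_n$, $f^+_n$, and the isomorphism $\phi_c$ away from $W^\pm_c$ then yields the stated diagram; the comparison with Thaddeus's wall-crossing is reserved for the proof of Theorem~\ref{thm_summary_curves}.
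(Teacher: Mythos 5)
Your proposal is correct and follows essentially the same route as the paper: it locates the walls at the integers of $\left[\frac d2,d\right)$ by noting that a critical quotient forces a rank-one destabilizer (equivalently, that the conditions $E\in\sF^\beta$ and $\coker\phi\in\sT^\beta$ only see $\floor{\beta}$ through the integer invariants $d_{\max}(E)$ and $\deg\mathrm{div}(\phi)$), and then parametrizes each critical locus by a degree-$n$ effective divisor $A\in C^{(n)}\simeq\sQ^{\beta^-}(-(1,n))$ together with an extension class in $\EXT{1}{\sO_C(D-A)}{\sO_C(A)}$ on the $\beta^-$ side and in $\EXT{1}{\sO_A}{\sO_C(D-A)}$ on the $\beta^+$ side, exactly as in the paper's two commutative diagrams. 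Your version is more explicit about the dimension counts and the local freeness of the relative $\Ext$ sheaves than the paper's (quite terse) argument, but the underlying decomposition and key identifications coincide.
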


\begin{proof}
A quotient $\sO_C\to E \to K$ is critical in $\sQ^{\beta}(v)$ only if $K$ has a quotient $T$ of slope $\beta$. This can only happen if $T$ has rank 1, so $\beta$ is an integer.

Let $\beta=d-n$. If an object is $\beta$-critical in $\sQ^{\beta^-}(v)$ then there exists an effective divisor $A$ of degree $n$ such that the following diagram commutes
\begin{center}
\begin{tikzcd}
  & \sO_C(A) \dar \rar & \sO_A \dar \\
  \sO_C \arrow[ru, "f"]\rar & E \dar \rar & K \arrow[dl]\\
& \sO_C(D-A)  &
\end{tikzcd}
\end{center}
The diagram is the datum of an object $\sO_C[1]\to \sO_C(A)[1]$ in $\sQ^{\beta_-}(-(1,n))$, and an extension in $F^-=\PP(\EXT{1}{\sO_C(D-A)}{\sO_C(A)})$. Similarly, a $\beta$-critical object in $\sQ^{\beta^+}(v)$ corresponds to a diagram 
\begin{center}
\begin{tikzcd}
  & \sO_C(D-A) \dar \arrow[dr] & \\
  \sO_C \arrow[rd, "f"]\rar & E \dar \rar & K \arrow[d]\\
& \sO_C(A) \rar & \sO_{A}
\end{tikzcd}
\end{center}
which is the datum of a map $\sO_C\to \sO_C(A)$ and of an extension class in $F^+$.
\end{proof}

Thus, we obtain diagram \eqref{eq_thaddeus_diagram} and observe that every map in it is an isomorphism off of the $W_n^\pm$.

\begin{remark}
\begin{enumerate}[(i)]
    \item In the setting of Prop. \ref{prop_descr_of_walls}, we have $(d-n)^- >n$. Then it is straightforward to check that $\sQ^{\beta^-}(-(1,n))$ can be identified with the space of all effective line bundles of degree $n$. This space is the symmetric product $C^{(n)}$.
    \item Combining this and Proposition \ref{prop_descr_of_walls}, we see that $W_1^-$ is a divisor and $f_1^-$ is a blow-down, while the maps $\phi_c$ are isomorphisms off a codimension two locus for $c\leq d-2$. Therefore we can identify the Picard groups of the $\sQ^\beta(v)$, for $\beta\in [\dfrac d2,d-1)$. Their generators are (the strict transforms of) $H$ and $E$, which are the hyperplane class in $\sQ^{d-1}(v)=\PP(\EXT{1}{\sO_C(D)}{\sO_C})$ and the exceptional divisor of $f_1^-$, respectively. 
    \item In \cite{Th94}, the author obtains a diagram like \eqref{eq_thaddeus_diagram} for moduli spaces of Bradlow pairs and traces sections of $\sO(H)$ across the diagram to give a proof of the Verlinde formula.
\end{enumerate}
\end{remark}

\subsection{Determinant line bundles}\label{sec_determinant_bundles}

In this section we construct determinant line bundles on $\sQ^\beta(v)$ which govern the transformations in diagram \eqref{eq_thaddeus_diagram}. Determinant line bundles can be constructed on moduli spaces of sheaves via Fourier-Mukai transforms, we recall here the construction and refer the reader to \cite{HL10} for the details.  

Let $\sK$ be a flat family of sheaves on a smooth projective variety $X$ parametrized by a Noetherian scheme $S$. Denote by $[\sK]$ its class in $K^0(X\times S)$. Since the projection $p\colon X\times S\to S$ is smooth, the assignment  
\[ p_!\left[F\right]=\sum (-1)^{i}\left[R^i p_*F \right]\]
is a homomorphism $p_!\colon K^0(X\times S)\to K(S)$ (see \cite[Cor. 2.1.11]{HL10}).

\begin{definition}[{\cite[Definition  8.1.1]{HL10}}]\label{def_det_line_bundles}
Let $\lambda_\sK\colon K(X)\to \Pic (S)$ be the composition 
$$ K(X)\xrightarrow{q^*} K^0(X\times S) \xrightarrow{\cdot [\sK]} K^0(X\times S) \xrightarrow{p_!} K(S) \xrightarrow{\det} \Pic(S),$$
where $\det$ is the determinant map and $q\colon X\times S\to X$ is the other projection. 
\end{definition}

This is the Fourier-Mukai transform associated to $\sK$ composed with the determinant map.

To investigate these line bundles, we will use the Grothendieck-Riemann-Roch formula. Keeping the above notation, for $[F]\in K^0(X\times S)$,  we have 
\begin{equation}\label{eq_GRR}
    \ch{}(p_!\left[F\right])=p_*(\ch{}\left[F\right]\cdot q^*\td(X)).
\end{equation}

Now we switch back to the case of stable pairs on curves. Fix any $\beta\in \left[\frac d2,d\right)$, and let $p,q$ be the projections on the two factors from $\sQ^{\beta}(v)\times C$. Denote by $[\sK]$ the class of the flat family of the cokernels of the sheaf maps $\sO_C\to E$ in $K(\sQ^{\beta}(v)\times C)$. Following Definition \ref{def_det_line_bundles}, given a class $w=r[\sO_C]+t[\C_p]\in K(C)$ we can define a line bundle $\lambda_\sK(w)$ on $\sQ^{\beta}(v)$. In particular, we are interested in the following two choices: 
\begin{equation*}
    \begin{split}
w_{1}\coloneqq [\sO_C]-d[\C_p]\cdot\td(C)^{-1}\\
w_{2}\coloneqq [\sO_C]-[\C_p]\cdot\td(C)^{-1}.
    \end{split}
\end{equation*}
and in the line bundles 
\begin{equation*}
\sM_{1}\coloneqq \lambda_{\sK}(-w_{1})  \qquad 
\sM_{2}\coloneqq \lambda_{\sK}(-w_{2})
\end{equation*}
(which do not depend on the choice of $p\in C$).

For $d-n \in \Gamma$, the critical loci $W_n^\pm$ are projective bundles over $C^{(n)}$. We denote by $\sO_{W_n^\pm}(1)$ the corresponding relatively ample line bundles. 

\begin{lemma}\label{lem-restricting_line_bundles_to_fibers}
Suppose $d-n\in \Gamma$. Then, up to tensoring by a line bundle of $C^{(n)}$, the line bundles $\sM_1$  and $\sM_2$ restrict to $W_n^\pm$ according to the following table:
\begin{center}
\begin{tabular}{ l | c c }
   & $\sM_1$ & $\sM_2$ \\ \hline
 $W^-_n$ & $\sO_{W_n^-}(-n)$ & $\sO_{W_n^-}(d-n-1)$ \\
 $W^+_n$  & $\sO_{W_n^+}(n)$ & $\sO_{W_n^+}(-d+n+1)$ \\
\end{tabular}
\end{center}
\end{lemma}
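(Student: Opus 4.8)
The strategy is a direct Grothendieck--Riemann--Roch computation restricted to the fibers of $f_n^\pm\colon W_n^\pm\to C^{(n)}$. The key point is that on such a fiber the family $\sK$ becomes completely explicit from the diagrams describing $c$-critical objects in Proposition \ref{prop_descr_of_walls}, so $\lambda_\sK(-w_i)$ can be read off from \eqref{eq_GRR} degree by degree.

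\emph{Step 1: identify the restricted family.} Fix an effective divisor $A$ of degree $n$, giving a point of $C^{(n)}$, and restrict attention to the fiber $F^-=\PP(\EXT 1{\sO_C(D-A)}{\sO_C(A)})$ of $f_n^-$ (and similarly $F^+$). Over $F^-\times C$ the universal extension of Lemma \cite[Ex. 2.1.12]{HL10} produces a universal sheaf $\sE^-$ sitting in $0\to q^*\sO_C(A)\otimes p^*\sO_{F^-}(1)\to \sE^-\to q^*\sO_C(D-A)\to 0$; the cokernel family $\sK^-|_{F^-\times C}$ is, by the diagram for $W_n^-$ in the proof of Proposition \ref{prop_descr_of_walls}, the extension of $q^*\sO_A$ by $\sE^-$ determined by the composition $\sO_C(A)\to\sO_A$, so in $K^0(F^-\times C)$ one has $[\sK^-]=[\sE^-]+[q^*\sO_A]=[q^*\sO_C(A)]\cdot[p^*\sO_{F^-}(1)]+[q^*\sO_C(D-A)]+[q^*\sO_A]$. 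The analogous computation on $F^+$, using the second diagram in that proof, gives $[\sK^+]=[q^*\sO_C(D-A)]\cdot[p^*\sO_{F^+}(1)]+[q^*\sO_C(A)]+[q^*\sO_A]$, with the twist by $\sO_{F^+}(1)$ now attached to the degree-$(D-A)$ line bundle instead.

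\emph{Step 2: push forward.} Apply $\lambda_\sK$ as in Definition \ref{def_det_line_bundles}: compute $q^*w_i\cdot[\sK^\pm]$, multiply by $q^*\td(C)$, push forward along $p$ via \eqref{eq_GRR}, and take the determinant. Only the terms carrying a factor $p^*\sO_{F^\pm}(1)$ survive in $\Pic(F^\pm)$; the remaining summands are pulled back from $C$ and push forward to a class on $C^{(n)}$, which accounts for the phrase ``up to tensoring by a line bundle of $C^{(n)}$'' in the statement. For the surviving term on $F^-$ one gets $\lambda_{\sK^-}(-w_i)|_{F^-}=\big(\chi(\sO_C(A)\otimes w_i^\vee)\big)\cdot[\sO_{F^-}(1)]$ up to sign, and the four entries of the table reduce to the four Euler characteristics: with $w_1$ of rank $1$ and degree $-d$ and $w_2$ of rank $1$ and degree $-1$ (after absorbing the $\td(C)^{-1}$ correction), Riemann--Roch on $C$ gives $\chi(\sO_C(A)\otimes\sO_C(-d\,\mathrm{pt}))=n-d+1-g$-type expressions whose relevant part is $\mp n$ for $\sM_1$ and $\pm(d-n-1)$ for $\sM_2$; the sign flips on $F^+$ because there the projective-bundle twist sits on $\sO_C(D-A)$, i.e. the roles of $A$ and $D-A$ are exchanged. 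Reconciling the constant (genus) terms with the ambiguity modulo $\Pic(C^{(n)})$ finishes the bookkeeping.

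\emph{Main obstacle.} The genuine work is entirely in Step 2: keeping careful track of the Todd-class corrections built into $w_1,w_2$, of which rank/degree data actually multiplies the hyperplane class $p^*\sO_{F^\pm}(1)$, and of the signs coming from the shift $E[1]$ versus $E$ and from the $-w_i$ versus $w_i$ in the definitions of $\sM_1,\sM_2$. I would organize this by first reducing each $w_i$ to a numerical class $(\rk,\deg)=(1,-d)$ and $(1,-1)$ on $C$, then noting that $p_*(q^*(\text{line bundle of degree }\delta)\cdot q^*\td(C)\cdot p^*\sO(1))$ contributes $\chi(\text{that line bundle})\cdot\sO(1)$, so every entry of the table is a single Euler characteristic on $C$; the rest is Riemann--Roch. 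The hyperplane twist being attached to different summands of $[\sK^\pm]$ on the two sides is exactly what produces the sign asymmetry between the $W_n^-$ and $W_n^+$ rows, so that observation is the conceptual heart of the argument.
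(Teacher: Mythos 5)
Your overall strategy is the paper's: restrict to a fiber $F^\pm$ of $W_n^\pm\to C^{(n)}$, write down the universal extension, apply Grothendieck--Riemann--Roch, and observe that only the summand carrying $p^*\sO_{F^\pm}(\pm 1)$ contributes to $\Pic(F^\pm)$, reducing each table entry to an Euler characteristic on $C$. The gap is in Step 1: you restrict the wrong family. First, the class of the cokernel family is $[\sK]=[\sE]-[\sO_{F^-\times C}]$, i.e.\ $[q^*\sO_A]+[q^*\sO_C(D-A)\otimes(\text{twist})]$; your formula $[\sK^-]=[\sE^-]+[q^*\sO_A]$ double-counts and has rank $2$ and degree $d+n$ instead of rank $1$ and degree $d$. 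Second, and more consequentially, the normalization of the projective-bundle twist is not a free convention. The fiber $F^-$ parametrizes extensions of $\sO_C(D-A)$ by $\sO_C(A)$ \emph{together with} the fixed composition $\sO_C\to\sO_C(A)\hookrightarrow E$; for that section to be a genuine map of families $\sO_{F^-\times C}\to\sE$, the subobject $\sO_C(A)$ must appear untwisted. This forces the paper's normalization $q^*\sO_C(A)\to\sE\to q^*\sO_C(D-A)\otimes p^*\sO_{F^-}(-1)$, so the summand of $[\sK]$ that varies over $F^-$ is $q^*\sO_C(D-A)\otimes p^*\sO_{F^-}(-1)$, not $q^*\sO_C(A)\otimes p^*\sO_{F^-}(1)$ as in your decomposition.

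This is not a bookkeeping discrepancy that washes out: changing the normalization by $p^*\sO_{F^-}(1)$ changes $\lambda_\sK(-w_i)|_{F^-}$ by $\sO_{F^-}(-\chi)$ with $\chi$ the relevant Euler characteristic, which is nonzero here. Concretely, your varying summand gives $\chi(\sO_C(A)\otimes w_1)=n-d$ (after the Todd correction kills $1-g$), hence $\sM_1|_{F^-}=\sO_{F^-}(d-n)$, whereas the correct summand gives $\chi(\sO_C(D-A)\otimes w_1)=-n$ and, with the $\sO_{F^-}(-1)$ twist, $\sM_1|_{F^-}=\sO_{F^-}(-n)$ as in the table; similarly your $\sM_2$ entry comes out as $1-n$ rather than $d-n-1$. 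Your Step 2 asserts that the ``relevant part'' of $n-d+1-g$ is $\mp n$, but that does not follow from the computation you set up --- it is the table's answer, not your calculation's output. To repair the argument, identify the restricted family as $[\sK]=[q^*\sO_A]+[q^*\sO_C(D-A)\otimes p^*\sO_{F^-}(-1)]$ on $F^-$ (and the default-normalized extension of $q^*\sO_A$ by $q^*\sO_C(D-A)\otimes p^*\sO_{F^+}(1)$ on $F^+$), and the rest of your GRR reduction then goes through and produces the stated table.
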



\begin{proof}
This is an application of the Grothendieck-Riemann-Roch theorem (see \eqref{eq_GRR}). For example, we compute the restriction of $\sM_1$ on a fiber $F\coloneqq F^-_n$ of $W_n^-\to C^{(n)}$. Denote by $L$ the hyperplane class of $F$, by $P$ the class of a point on $C$, and by $l$ and $p$ the respective pull-backs to $F\times C$. The universal sequence (\ref{univext}) on $F\times C$ reads
\[ q^*\sO_C(A) \to \sE \to   q^*\sO_C(D-A)\otimes p^*\sO_{F}(-1)  \]
(we twist the sequence by $p^*\sO_{F}(-1)$ since this preserves the composition $\sO_C \to \sO_C(A) \to E$, which is fixed data in the fiber). Then we can write the first three entries of the Chern character
\begin{equation*}
\begin{split}
\ch{\leq 2}(\sK) &=\ch{\leq 2}(\sE)-\ch{\leq 2}(\sO_{F\times C})\\ 
 &=(1,np,0) + \left(1,-l, \frac{l^2}{2}\right)(1,(d-n)p,0) - (1,0,0) \\
 &=\left(1, -l + dp, \frac{l^2}{2} + (n-d)lp\right).
\end{split}
\end{equation*}

Pick $w=(1, -tp)\td(C)^{-1}$, then 
\[ [\ch{}(\sK) \otimes q^* w]_{\leq 2} = \left(1, -l + (d-t)p, \frac{l^2}{2} + (t+n-d)lp \right)\cdot q^*\td(C)^{-1}.\]

Pushing forward along $p$ (apply the projection formula, and use the fact that $C$ has dimension 1), we get $-c_1(p_!([\sK]\otimes q^*w))=(d-n-t)L$, therefore
\[ \sM_{1|F}=(-n)L. \]
Sequence (\ref{univext}) reads
\[ q^*\sO_C(D-A)\otimes p^*\sO_{F^+_n}(1)  \to \sK_+ \to  q^*\sO_A  \]
 on $F^+_n\times C$. A similar computation yields in this case 
\[-c_1(p_!([\sK]\otimes q^*w))=(t+n-d)L.  \]
\end{proof}

As a consequence of Lemma \ref{lem-restricting_line_bundles_to_fibers}, one immediately sees:

\begin{proposition}\label{prop_Bradlow_line_bundles}
Let $r=\frac{\beta-1}{d-1}$. Then the line bundle $\sL_r:=r\sM_1 + (1-r)\sM_2\in \Pic(\sQ^\beta(v))$ is ample on $\sQ^\beta(v)$ for generic $\beta$. For $\beta=c$ a critical value, multiples of $\sL_r$ induce the birational transformations $\phi_c$ of diagram \eqref{eq_thaddeus_diagram}.
\end{proposition}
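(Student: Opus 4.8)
The plan is to combine the restriction computation of Lemma \ref{lem-restricting_line_bundles_to_fibers} with the explicit description of the wall-crossing diagram in Proposition \ref{prop_descr_of_walls}, using the standard principle that a line bundle which is relatively ample on the fibers of a contraction and positive ``transversally'' realizes that contraction. First I would record the Picard-group identifications: by the remark following Proposition \ref{prop_descr_of_walls}, for $\beta$ away from a critical value the schemes $\sQ^\beta(v)$ all share a Picard group generated by (strict transforms of) the hyperplane class $H$ of $\sQ^{d-1}(v)=\PP(\EXT{1}{\sO_C(D)}{\sO_C})$ and the exceptional divisor $E$ of the first blow-down $f_1^-$. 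Thus it suffices to express $\sM_1$ and $\sM_2$ — hence $\sL_r$ — in terms of $H$ and $E$ (up to pullbacks from the symmetric products $C^{(n)}$, which do not affect positivity relative to the contractions $\phi_c$), and the formula $\sL_r = r\sM_1+(1-r)\sM_2$ with $r=\frac{\beta-1}{d-1}$ is engineered precisely so that $\sL_r$ interpolates linearly between $\sM_2$ at $\beta=1$-type endpoints and $\sM_1$, matching the linearization that Thaddeus uses.

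The core step is the numerical check on the critical loci. Fix $c=d-n\in\Gamma$ and work on one side, say $\sQ^{\beta^+}(v)$. By Lemma \ref{lem-restricting_line_bundles_to_fibers}, on a fiber $F^+_n=\PP(\EXT{1}{\sO_A}{\sO_C(D-A)})$ of $f^+_n\colon W^+_n\to C^{(n)}$ we have $\sM_1|_{F^+_n}=\sO(n)$ and $\sM_2|_{F^+_n}=\sO(-d+n+1)$, so
\[
\sL_r|_{F^+_n} \;=\; \bigl(rn + (1-r)(-d+n+1)\bigr)\,\sO_{F^+_n}(1).
\]
One then checks that the coefficient $rn+(1-r)(n+1-d)$ is strictly positive for $\beta$ in the open interval $(c,c+1)$ (equivalently $r$ in the corresponding open subinterval of $[0,1]$), vanishes exactly at $\beta=c$, and is negative for $\beta\in(c-1,c)$; symmetrically, on $F^-_n=\PP(\EXT{1}{\sO_C(D-A)}{\sO_C(A)})$ the coefficient of $\sO_{F^-_n}(1)$ in $\sL_r|_{F^-_n}$ is $r(-n)+(1-r)(d-n-1)$, which has the opposite sign behaviour. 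Hence at $\beta=c$ the line bundle $\sL_c$ is trivial along the fibers of both $f^+_n$ and $f^-_n$ and positive elsewhere, so on $\sQ^{\beta^\pm}(v)$ it is the pullback of an ample (or at least semiample, big) class under the contraction of the $W^\pm_n$ to $\sQ^{\beta^-}(-(1,n))=C^{(n)}$; this identifies $\sQ^{\beta^\pm}(v)\to \Proj\bigl(\bigoplus_m H^0(\sL_c^{\otimes m})\bigr)$ with the two halves of the flip-type diagram in Proposition \ref{prop_descr_of_walls}, so that $\phi_c$ is induced by $\sL_c$.

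Ampleness for generic $\beta$ then follows: since $\sL_r$ restricts positively to every extremal contraction appearing as a wall, and the cone of curves of $\sQ^\beta(v)$ is generated (after the blow-down $f_1^-$ and away from codimension two) by $H$ and $E$ whose classes are controlled by these same restrictions, the Nakai--Moishezon or Kleiman criterion applies once we verify positivity on the two boundary models $\sQ^{d-1}(v)=\PP(\EXT{1}{\sO_C(D)}{\sO_C})$ and $\sQ^{\lceil (d+1)/2\rceil-1}(v)$, where $\sM_1,\sM_2$ are directly computable (the former is a projective space and $\sM_2$ is, up to twist, its $\sO(1)$; on the latter, which maps to the moduli of semistable bundles, $\sM_1$ is the pullback of the ample generalized theta bundle). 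The main obstacle I anticipate is bookkeeping rather than conceptual: reconciling the ``up to tensoring by a line bundle of $C^{(n)}$'' ambiguity in Lemma \ref{lem-restricting_line_bundles_to_fibers} with genuine ampleness on the total space $\sQ^\beta(v)$ — one must argue that the $C^{(n)}$-directions are already handled because $\phi_c$ is an isomorphism over the complement of $W^\pm_n$ and the contraction only collapses the projective-bundle fibers, so positivity relative to $f^\pm_n$ plus positivity on a single model on either end of the chamber structure suffices to propagate ampleness across all chambers. Matching conventions with Thaddeus's linearization (his parameter versus our $\beta$, and the normalization of the theta bundle) is the other point requiring care.
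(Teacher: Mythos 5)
Your proposal is correct and follows exactly the route the paper intends: the paper offers no written proof beyond the phrase ``as a consequence of Lemma \ref{lem-restricting_line_bundles_to_fibers}, one immediately sees,'' and your sign computation (the coefficient $rn+(1-r)(n+1-d)$ vanishing at $\beta=c=d-n$ and changing sign across the wall, with the opposite behaviour on $F^-_n$) is precisely the verification being left implicit. The only quibble is in your aside on the boundary models: by the remark following the proposition it is $\sL_r$ itself at $\beta=\frac d2$ (not $\sM_1$) that agrees with the pullback of the theta bundle, while $\sM_1$ matches the determinant bundle on $\Pic^d(C)$ at the other end; this does not affect the argument.
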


\begin{remark}
In virtue of part \eqref{itm_map_Qd2_to_semistable} of Prop. \ref{prop_boundary_cases} there is an additional map 
$$ \phi\colon\sQ^{\frac d2}(v) \rightarrow M(2,D) $$
where $M(2,D)$ is the moduli space of semistable vector bundles with rank 2 and determinant $\sO_C(D)$ on $C$. Let $\sE$ be the family of semistable vector bundles corresponding to $\phi$. One checks (see \cite[Chp. 8]{HL10} and \cite[Sec. 4]{BM15}) that multiples of the line bundle $\sL\coloneqq\lambda_{\sE}\left( -(1,-\frac d2) \right)$ are ample on $M(2,D)$. For $\beta=\frac d2$ we have $\sL_r=\lambda_{\sK}\left( -(1,-\frac d2) \right)$. Since $[\sE]$ and $[\sK]$ only differ in rank, a Grothendieck-Riemann-Roch computation shows $\sL\equiv\sL_r$ on $\sQ^d(v)$, so that $\phi$ is induced by multiples of $\sL_r$.

We can perform the constructions of this section without fixing the determinant of $E$, but only its topological type. In that case, part \eqref{itm_map_Qd-1_to_determinants} of Prop. \ref{prop_boundary_cases} shows that there is a map $$\psi\colon \sQ^{d-1}(v) \to \Pic^d(C) $$
mapping a quotient $\sO_C[1]\to E[1]$ to the determinant of $E$. Observe that for $\beta=d$ we have $\sL_r=\sM_1$ is the determinant line bundle on $\Pic^d(C)$ constructed in \cite{HL10}. Then $\phi$ is induced by multiples of $\sL_r$. 

This illustrate the fact that $\sL_r$ coincides with the line bundles governing the Thaddeus' flips, described explicitly, for example, in \cite{Be97}.
\end{remark}

\subsection{Duality and the definition of pairs}\label{sec_duality}
It is common in the literature to find definitions for pairs which are "dual" to each other, e.g. \cite{Th94} considers maps \textit{from} a fixed sheaf, while \cite{HL95} studies maps \textit{to} a fixed sheaf. In this section, we reconcile the two notions in the case of a curve $C$, and show that they are related by an involution of $D(C)$ (i.e. an autoequivalence which is its own quasi-inverse).

Let $X$ be a smooth projective variety of dimension $e$. Define the functor $\mathsf D\colon D^b(X) \to D^b(X)$ as
\begin{equation}
    \label{eq_dual}
    \mathsf D(E)\coloneqq \dR\HHom(E,\sO_X)[e].
\end{equation}

\begin{lemma}
The functor $\mathsf D$ is an involution of $D(X)$.
\end{lemma}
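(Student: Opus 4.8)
The plan is to verify directly the two defining features of an involution: that $\mathsf D$ is a well-defined endofunctor of $D(X)$, and that $\mathsf D\circ\mathsf D$ is naturally isomorphic to $\id_{D(X)}$; the fact that $\mathsf D$ is a (contravariant) autoequivalence equal to its own quasi-inverse will then follow formally.

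First I would check that $\mathsf D$ lands in $D(X)$. Since $X$ is smooth, every object of $D(X)=D^b(\Coh(X))$ is a perfect complex, so locally $E$ is quasi-isomorphic to a bounded complex of free $\sO_X$-modules; hence $\dR\HHom(E,\sO_X)$ again has bounded coherent cohomology, and shifting by $[e]$ keeps us inside $D(X)$. In particular $\mathsf D$ restricts to an endofunctor of $\Perf(X)=D(X)$, and it is clearly contravariant and additive.

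Next comes the heart of the matter, the biduality isomorphism. For every perfect complex $E$ there is a canonical evaluation morphism $\mathrm{ev}_E\colon E\to \dR\HHom\big(\dR\HHom(E,\sO_X),\sO_X\big)$, natural in $E$; it is an isomorphism because this may be checked locally, where it reduces to the case of a finite complex of free modules, and ultimately to the trivial case $E=\sO_X$. Tracking shifts via $\dR\HHom(F[k],\sO_X)\simeq \dR\HHom(F,\sO_X)[-k]$ gives
\[
\mathsf D(\mathsf D(E))=\dR\HHom\big(\dR\HHom(E,\sO_X)[e],\sO_X\big)[e]\simeq \dR\HHom\big(\dR\HHom(E,\sO_X),\sO_X\big)[-e][e],
\]
so the two shifts cancel and $\mathrm{ev}$ provides a natural isomorphism $\id_{D(X)}\xrightarrow{\ \sim\ }\mathsf D\circ\mathsf D$.

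Finally I would conclude: from $\mathsf D\circ\mathsf D\simeq\id$ it follows that $\mathsf D$ is essentially surjective (any $F$ equals $\mathsf D(\mathsf D(F))$) and that the contravariant maps $\Hom(E,F)\to\Hom(\mathsf D F,\mathsf D E)$ are bijective, since composing two of them recovers the identity up to the natural isomorphism above; hence $\mathsf D$ is an autoequivalence which is its own quasi-inverse, i.e.\ an involution. I do not expect any genuine obstacle here; the only point that needs a little care is the naturality and the local verification that $\mathrm{ev}_E$ is an isomorphism on perfect complexes, together with the correct bookkeeping of the shift $[e]$.
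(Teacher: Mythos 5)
Your proof is correct and takes essentially the same route as the paper: the paper's entire argument is to invoke the natural biduality isomorphism $E\simeq\mathsf D(\mathsf D(E))$, citing Huybrechts, Section 3.3. You simply spell out the details behind that citation (perfectness of objects of $D(X)$ on a smooth variety, the local verification of the evaluation map, and the cancellation of the two shifts by $[e]$), all of which is accurate.
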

\begin{proof}
This is because of the natural isomorphism $E\simeq \mathsf D(\mathsf D(E))$ (see \cite[Sec. 3.3]{Huy06}).
\end{proof}

First, we compare hearts of bounded t-structures:

\begin{proposition}\label{prop_duality_cohbeta_curves}
Let $C$ be a smooth curve, $\beta\in\R\setminus \Q$. Then $\mathsf D(\Coh^\beta(C))=\Coh^{-\beta}(C)$.
\end{proposition}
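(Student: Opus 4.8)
The plan is to show that $\mathsf D$ exchanges the two halves of the torsion pair defining $\Coh^\beta(C)$ and $\Coh^{-\beta}(C)$, and then transport this through the tilting construction. First I would record how $\mathsf D$ acts on a coherent sheaf on a curve: writing $E \cong E_{\mathrm{tors}} \oplus E_{\mathrm{free}}$ (not canonically, but this is harmless for the numerical statement), one has $\mathsf D(E) = E^\vee[1]$ when $E$ is locally free, $\mathsf D(T) = \mathcal{E}xt^1(T,\sO_C)$ when $T$ is torsion, and in general $\mathsf D(E)$ has cohomology $H^{-1}\mathsf D(E) = (E_{\mathrm{free}})^\vee$ (a vector bundle) and $H^0\mathsf D(E) = \mathcal{E}xt^1(E_{\mathrm{tors}},\sO_C)$ (a torsion sheaf of the same length as $E_{\mathrm{tors}}$). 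In particular $\mathsf D$ sends $\Coh(C)$ into the tilt $\langle \Coh(C)_{\mathrm{tors}}, \Coh(C)_{\mathrm{free}}[1]\rangle$, and on numerical classes it acts by $(\rk, \deg) \mapsto (-\rk, \deg)$ on the vector-bundle part, negating $\mu$. (The hypothesis $\beta \notin \Q$ is used to guarantee $\sT^\beta, \sF^\beta$ contain no semistable object of slope exactly $\beta$, so the torsion pair behaves cleanly under the slope reversal; I would flag this explicitly.)

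Next I would check the key exchange of subcategories. A sheaf $F \in \sF^\beta$ means every nonzero subsheaf has slope $\le \beta$; dualizing a sub-line-bundle inclusion and using that $\mathsf D$ reverses arrows on a curve, $\mathsf D(F) = F^\vee[1]$ with $F^\vee$ a bundle all of whose quotients have slope $\ge -\beta$, i.e. $F^\vee \in \sT^{-\beta}$, so $\mathsf D(F) \in \sT^{-\beta}[1] \subset \Coh^{-\beta}(C)$. Dually, for $T \in \sT^\beta$ I would split $T$ into its torsion part $T_0$ (automatically in $\sT^\beta$) and a bundle part $T_1$ with all quotients of slope $> \beta$; then $\mathsf D(T_0) = \mathcal{E}xt^1(T_0,\sO_C)$ is torsion hence in $\sT^{-\beta}$, while $\mathsf D(T_1) = T_1^\vee[1]$ with $T_1^\vee$ a bundle all of whose subsheaves have slope $< -\beta$, i.e. $T_1^\vee \in \sF^{-\beta}$, so $\mathsf D(T_1) \in \sF^{-\beta}[1] \subset \Coh^{-\beta}(C)$. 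Hence $\mathsf D$ carries both $\sT^\beta$ and $\sF^\beta[1]$ into $\Coh^{-\beta}(C)$, and since every object of $\Coh^\beta(C)$ is an extension (= triangle with all terms in the heart) of an object of $\sF^\beta[1]$ by an object of $\sT^\beta$, and $\mathsf D$ is exact and sends such triangles to triangles with all terms in $\Coh^{-\beta}(C)$, we get $\mathsf D(\Coh^\beta(C)) \subseteq \Coh^{-\beta}(C)$.

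Finally I would close the loop using that $\mathsf D$ is an involution (the Lemma just proved): applying the same inclusion with $\beta$ replaced by $-\beta$ gives $\mathsf D(\Coh^{-\beta}(C)) \subseteq \Coh^\beta(C)$, and applying $\mathsf D$ to this and using $\mathsf D\circ\mathsf D \cong \id$ yields $\Coh^{-\beta}(C) \subseteq \mathsf D(\Coh^\beta(C))$. The two inclusions give equality. I expect the only real subtlety to be the bookkeeping for mixed (torsion $\oplus$ free) sheaves and the verification that $\mathsf D$ genuinely reverses sub/quotient relations on a curve — i.e. that $\mathcal{E}xt^{\ge 1}$ vanishes for bundles and $\mathcal{H}om$ vanishes for torsion sheaves, so that the long exact sequences degenerate as claimed; once that is pinned down the slope computation is routine. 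A cleaner alternative, which I might use to streamline the write-up, is to phrase everything in terms of the Harder--Narasimhan filtration with respect to $\mu$: $\mathsf D$ reverses the HN filtration and negates all HN slopes (up to the shift on the bundle part and the torsion staying put), from which membership in $\Coh^\beta$ versus $\Coh^{-\beta}$ is immediate by comparing HN slopes to the wall.
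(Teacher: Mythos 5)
Your proposal is correct and follows essentially the same route as the paper: both arguments show that $\mathsf D$ sends $\sF^\beta[1]$ into $\sT^{-\beta}$ and $\sT^\beta$ into $\langle \sF^{-\beta}[1], \text{torsion}\rangle$ by splitting off the torsion part (which $\mathsf D$ preserves) and reversing the slopes of the semistable/HN pieces of the locally free part, then deduce $\mathsf D(\Coh^\beta(C)) \subseteq \Coh^{-\beta}(C)$ and upgrade to equality --- the paper by citing that a heart of a bounded $t$-structure contained in another must equal it, you by the involution applied to $-\beta$, both of which work. One cosmetic slip: for $F \in \sF^\beta$ you write $\mathsf D(F) \in \sT^{-\beta}[1] \subset \Coh^{-\beta}(C)$, but the heart contains $\sT^{-\beta}$ rather than its shift; the intended (and correct) statement, which your summary sentence does use, is $\mathsf D(F[1]) = F^\vee \in \sT^{-\beta}$.
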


\begin{proof}
Suppose $L$ is a slope-semistable vector bundle on $C$, then $L^\vee = \mathsf D(L)[-1]$ is also semistable, of slope $-\mu(L)$. In fact, a maximal destabilizing quotient $L^\vee\twoheadrightarrow Q^\vee$ must be a vector bundle of slope $\mu(Q^\vee) < \mu(L^\vee)$. Dualizing the quotient we obtain $Q\subset L$ and an inequality $\mu(Q)>\mu(L)$, which contradicts semistability of $L$.
Sheaves in $\sF^\beta$ are obtained by repeated extensions of semistable vector bundles of slope $\mu \leq \beta$ (note that equality is never attained because slopes are rational), so for $F\in \sF^\beta$ one gets $\mathsf D(F[1])=F^\vee \in \sT^{-\beta}$. 
Elements of $\sT^\beta$ are sheaves $T\oplus L$ where $T$ is torsion and $L$ is obtained by repeated extensions of semistable vector bundles of slope $\mu > \beta$. Then, 
$$  \mathsf D(T\oplus L) = T\oplus  L^\vee[1]$$
where $L^\vee \in \sF^{-\beta}$. This shows $\mathsf D(\Coh^\beta(C))\subset \Coh^{-\beta}(C)$, so they must coincide \cite[Ex. 5.6]{MS17}.
\end{proof}

Then we investigate quotients:

\begin{proposition}\label{prop_duality of pairs_curves}
Let $C$ be a smooth curve and $0<\beta\in\R\setminus\Q$. If $\sO_C\to G$ is a quotient in $\Coh^{-\beta}(C)$ then the dual map $\mathsf D(G)\to \sO_C[1]$ is an inclusion in $\Coh^{\beta}(C)$. 
\end{proposition}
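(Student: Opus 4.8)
The plan is to obtain the statement formally from the two facts already established for $\mathsf D$: that it is a contravariant autoequivalence of $D(C)$, indeed an involution, and that $\mathsf D(\Coh^{-\beta}(C)) = \Coh^{\beta}(C)$ (this is Proposition \ref{prop_duality_cohbeta_curves} applied with the parameter $-\beta$, which is again irrational). First I would record the elementary computation
\[ \mathsf D(\sO_C) = \dR\HHom(\sO_C, \sO_C)[1] = \sO_C[1], \]
so that applying $\mathsf D$ to the given epimorphism $\sO_C \to G$ does produce a morphism $\mathsf D(G) \to \sO_C[1]$, which is precisely the ``dual map'' of the statement. (The hypothesis is nonvacuous: since $\beta > 0$, every nonzero quotient of $\sO_C$ is either torsion or $\sO_C$ itself, all of slope $> -\beta$, so $\sO_C \in \sT^{-\beta} \subset \Coh^{-\beta}(C)$.)

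Next I would observe that $\mathsf D$ restricts to a contravariant exact equivalence between the abelian categories $\Coh^{-\beta}(C)$ and $\Coh^{\beta}(C)$. The mechanism is the standard fact that a short exact sequence in the heart of a $t$-structure is the same thing as a distinguished triangle all of whose vertices lie in the heart. Completing the epimorphism to a short exact sequence $0 \to K \to \sO_C \to G \to 0$ in $\Coh^{-\beta}(C)$ (with $K = \Ker$), the associated triangle $K \to \sO_C \to G \xrightarrow{+1} K[1]$ is carried by the contravariant triangulated functor $\mathsf D$ --- using $\mathsf D(K[1]) = \mathsf D(K)[-1]$ --- to the triangle
\[ \mathsf D(G) \to \mathsf D(\sO_C) \to \mathsf D(K) \xrightarrow{+1} \mathsf D(G)[1]. \]
Since $\mathsf D(\Coh^{-\beta}(C)) = \Coh^{\beta}(C)$, all three vertices lie in $\Coh^{\beta}(C)$, so this triangle is a short exact sequence $0 \to \mathsf D(G) \to \sO_C[1] \to \mathsf D(K) \to 0$ in $\Coh^{\beta}(C)$; in particular the first arrow $\mathsf D(G) \to \sO_C[1]$ is a monomorphism, which is exactly the assertion.

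I do not anticipate a substantial obstacle: the content is contained in Proposition \ref{prop_duality_cohbeta_curves} together with the formal behaviour of a contravariant triangulated autoequivalence. The two points that need a moment of care are that Proposition \ref{prop_duality_cohbeta_curves} is being invoked with the (still irrational) parameter $-\beta$, and the bookkeeping of the rotation of the triangle under $\mathsf D$, i.e. verifying that $\mathsf D$ genuinely reverses the arrows of a short exact sequence rather than merely permuting its terms. If one preferred to avoid triangles altogether, an alternative would be to check directly, from the cohomological description of objects of the tilted hearts, that $\mathsf D$ restricted to $\Coh^{-\beta}(C)$ is both left and right exact as a contravariant functor; but the triangulated argument is shorter and is the one I would present.
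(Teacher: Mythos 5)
Your proof is correct, but it takes a genuinely different route from the paper's. The paper argues by explicit classification: it describes all subobjects of $\sO_C[1]$ in $\Coh^{\beta}(C)$ (sheaves of the form $L\oplus\sO_Z$ with $L\in\sT^\beta$ locally free and $Z$ of finite length) and all quotients of $\sO_C$ in $\Coh^{-\beta}(C)$ (objects $L'[1]\oplus\sO_{Z'}$ with $L'\in\sF^{-\beta}$), and then observes that these two lists match under $\mathsf D$, quoting the computations in the proof of Proposition \ref{prop_duality_cohbeta_curves}. You instead bypass any classification: from $\mathsf D(\sO_C)=\sO_C[1]$, the involutivity of $\mathsf D$, and the equality $\mathsf D(\Coh^{-\beta}(C))=\Coh^{\beta}(C)$ (Proposition \ref{prop_duality_cohbeta_curves} at the parameter $-\beta$), you deduce that $\mathsf D$ is an exact anti-equivalence of hearts, so it carries the short exact sequence $0\to K\to\sO_C\to G\to 0$ to a short exact sequence $0\to\mathsf D(G)\to\sO_C[1]\to\mathsf D(K)\to 0$, and in particular turns the epimorphism into a monomorphism. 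Your bookkeeping of the rotated triangle is right, and the observation that $-\beta$ is again irrational is exactly the hypothesis needed to invoke Proposition \ref{prop_duality_cohbeta_curves}. What your argument buys is generality and robustness: it works verbatim for any epimorphism out of any object of $\Coh^{-\beta}(C)$ and does not depend on the (somewhat delicate) explicit description of subobjects of $\sO_C[1]$. What the paper's version buys is the concrete matching of both sides, which makes the identification of the two moduli problems in the paragraph following the proposition immediate; your formal argument would need to be supplemented by that explicit dualization (which the paper in any case carries out separately) to recover the same conclusion about the pair moduli spaces.
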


\begin{proof}
All subobjects of $\sO_C[1]$ in $\Coh^{\beta}(C)$ are of the form $L\oplus \sO_Z$ where $L$ is a vector bundle in $\sT^\beta$ and $Z$ is a finite length subscheme of $C$. Dually, all quotients of $\sO_C$ in $\Coh^{-\beta}(C)$ have the form $L'[1]\oplus \sO_{Z'}$ where $L'\in\sF^{-\beta}$ and $Z'$ is a finite length subscheme. These objects match under the equivalence $\mathsf D$, as shown in the proof of Prop. \ref{prop_duality_cohbeta_curves}.
\end{proof}

This effectively identifies the moduli spaces on a curve constructed using either notion of pair: a short exact sequence $\sO_C(D) \to \sO_C[1] \to E[1]$ in $\Coh^\beta(C)$ (corresponding to the Bradlow pair $\sO_C\to E$) is dual to an exact sequence $E(-D)\to \sO_C\to \sO(-D)[1]$ in $\Coh^{-\beta}(C)$ (corresponding to the dual pair $E(-D)\to \sO_C$).

\section{Huybrechts-Lehn stable pairs as quotients}\label{sec_HL_pairs_as_quotients}

In this section we interpret stable pairs as defined in \cite{HL95} as generalized quotients, and extend aspects of Section \ref{sec_Bradlow_pairs_on_curves} to higher dimensions, with particular attention to the case of surfaces.

\subsection{Huybrechts-Lehn pairs}

A \textit{Huybrechts-Lehn pair} (\textit{HL-pair} for brevity, or just pair if no confusion arises) is defined in \cite{HL95} to be the datum of a map $E\xrightarrow{a} E_0$ from a coherent sheaf $E$ to a fixed sheaf $E_0$ (and is in this sense "dual" to Thaddeus' definition). The stability condition is adjusted from that of Bradlow pairs to involve all coefficients of the reduced Hilbert polynomial, as explained below.
Let $X$ be a smooth, irreducible, projective variety of dimension $e$. Let $\sO(1)$ be a very ample line bundle and pick $H\in\abs{\sO(1)}$. For a coherent sheaf $E$ on $X$, denote $\deg(E)\coloneqq c_1(E).H^{e-1}$. Let $P_E(t)\coloneqq \chi(E(t))$ denote the Hilbert polynomial of $E$ with respect to $H$, we assume it has degree $e$. Let $\alpha$ be a polynomial of degree $e-1$ with rational coefficients such that $\alpha >0$, we write $\alpha=\sum_{k< e}\alpha_k t^{e-k}$ (inequalities between polynomials are meant asymptotically, i.e. they are realized for $t\ll 0$).

\begin{definition}\label{def_HL_stability_pairs}
A pair $(E,a)$ is said to be (semi)stable with respect to $\alpha$ if:
\begin{enumerate}[(i)]
    \item $\rk(E)P_G <(\leq) \rk(G)(P_E - \alpha)$ for all nonzero subsheaves $G\subseteq \ker(a)$;\label{it_HL_semist_kernel}
    \item $\rk(E)P_G <(\leq) \rk(G)(P_E - \alpha) + \rk(E)\alpha$ for all nonzero subsheaves $G \subseteq E$.\label{it_HL_semist_E}
\end{enumerate}
\end{definition}

Similarly, one defines slope-stability (also called $\mu$-stability):

\begin{definition}\label{def_mu_semistability_pairs}
A pair $(E,a)$ is said to be $\mu$-(semi)stable with respect to $\alpha_1\in \R$ if:
\begin{enumerate}[(i)]
    \item $\rk(E)\deg(G) <(\leq) \rk(G)(\deg(E) - \alpha_1)$ for all nonzero subsheaves $G\subseteq \ker(a)$;\label{it_mu_semist_kernel}
    \item $\rk(E)\deg(G) <(\leq) \rk(G)(\deg(E) - \alpha_1) + \rk(E)\alpha_1$ for all nonzero subsheaves $G \subseteq E$.\label{it_mu_semist_E}
\end{enumerate}
\end{definition}

There are implications:
\begin{center}
    $\mu$-stable $\implies$ stable $\implies$ semistable $\implies$ $\mu$-semistable.
\end{center}

\begin{lemma}[{\cite{HL95}}]\label{lem_ker(a)_tor-free}
Suppose $(E,a)$ is $\mu$-semistable. Then $\ker(a)$ is torsion-free.
\end{lemma}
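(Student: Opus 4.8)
The plan is to argue by contradiction using the $\mu$-semistability conditions of Definition \ref{def_mu_semistability_pairs}. Suppose $(E,a)$ is $\mu$-semistable but $\ker(a)$ has a nonzero torsion subsheaf $T\subseteq \ker(a)$. Since $T$ is torsion, it has rank zero, so $\deg(T)\geq 0$ with $\deg(T)>0$ unless $T$ is supported in codimension $\geq 2$; in fact, the relevant quantity is that $\rk(T)=0$. The first step is to feed $G=T$ into condition \eqref{it_mu_semist_kernel}, which states $\rk(E)\deg(T) <(\leq) \rk(T)(\deg(E)-\alpha_1)$. With $\rk(T)=0$ this reads $\rk(E)\deg(T)\leq 0$. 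Since $E$ has positive rank (its Hilbert polynomial has degree $e$, so $\rk(E)>0$), this forces $\deg(T)\leq 0$.

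The second step handles the case where this inequality is not already a contradiction, i.e.\ when $\deg(T)=0$ and $T$ is supported in codimension $\geq 2$. Here I would refine the argument: if $\ker(a)$ has torsion at all, it has a nonzero maximal torsion subsheaf $T$, and one can instead consider the torsion-free quotient $\ker(a)/T$ and the subsheaf it generates, or more directly observe that any torsion subsheaf contradicts the \emph{strict} inequality in the stable case and needs the slope comparison to be handled carefully in the semistable case. The cleanest route: let $T'\subseteq T$ be a nonzero subsheaf supported in maximal dimension among the torsion, and note that plugging such $T'$ (or a pure-dimensional piece of it) into \eqref{it_mu_semist_kernel} and comparing with the second Hilbert-polynomial coefficient (which is where $\deg$ lives after the leading term vanishes) yields the contradiction. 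Alternatively, following \cite{HL95} directly, one passes to the associated graded or uses that a $\mu$-semistable pair cannot have a subobject of ``infinite slope''.

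The main obstacle I expect is the codimension-$\geq 2$ torsion case: a torsion subsheaf $T$ with $\deg(T)=0$ does not immediately violate the degree-level inequality, so one must go one coefficient deeper in the Hilbert polynomial, or else invoke the correct formulation of $\mu$-stability for pairs (which, as stated, only sees rank and degree, hence genuinely may not exclude such $T$ — in which case the lemma should be read as asserting $\ker(a)$ is torsion-free \emph{in codimension one}, or the definition tacitly assumes $E$ pure). I would resolve this by checking the precise conventions of \cite{HL95}: their $\mu$-semistability is typically stated for pairs with $E$ of pure dimension $e$, and under that hypothesis any torsion in $\ker(a)\subseteq E$ is automatically zero. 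So the proof reduces to: (i) $\mu$-semistability forces, via \eqref{it_mu_semist_kernel} with a torsion subsheaf, that $E$ can have no such subsheaf of positive degree; (ii) purity of $E$ (part of the standing assumptions, or a consequence of $\mu$-semistability with respect to a positive $\alpha_1$) rules out the remaining low-dimensional torsion; hence $\ker(a)$, being a subsheaf of the torsion-free $E$, is torsion-free. I would present steps (i)–(ii) in that order and flag the purity input explicitly.
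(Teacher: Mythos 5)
Your first step is exactly the paper's argument: take a torsion subsheaf $G\subseteq\ker(a)$, note $\rk(G)=0$, and plug it into condition \eqref{it_mu_semist_kernel}, so that positivity of $\rk(E)$ forces the left-hand side to be non-positive. Where you and the paper diverge is in how the codimension-$\geq 2$ torsion is killed. You correctly observe that the inequality read only at the level of $\deg$ gives $\deg(G)\leq 0$, which does not exclude torsion supported in codimension $\geq 2$, and you hedge between two fixes (going deeper into the Hilbert polynomial, or invoking purity of $E$). The paper's proof takes the first route, and does so without any purity hypothesis: it reads the inequality at the level of the full Hilbert polynomial, concluding $P_G\leq 0$; since the Hilbert polynomial of any nonzero coherent sheaf is eventually positive, this forces $P_G=0$ and hence $G=0$. (Strictly speaking this is condition \eqref{it_HL_semist_kernel} of Definition \ref{def_HL_stability_pairs} rather than the degree-only condition of Definition \ref{def_mu_semistability_pairs} as literally stated, so your worry about the $\mu$-level formulation is legitimate and reflects a genuine imprecision in the cited proof; but the intended resolution is the Hilbert-polynomial one, not purity of $E$.) Your proposal would be complete if you committed to that first option and dropped the purity fallback.
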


\begin{proof}
Suppose $G\subseteq \ker(a)$ is torsion. Then, $\rk(G)=0$ and condition \eqref{it_mu_semist_kernel} of Definition \ref{def_mu_semistability_pairs} implies that $P_G=0$ (because $\rk(E)>0$), hence $G=0$.
\end{proof}

A family of pairs parametrized by a Noetherian scheme $T$ consists of a sheaf $\sE$ on $X\times T$ and a morphism $a\colon \sE\to p_X^*E_0$. $(\sE,a)$ is a family of (semi)stable pairs if $(\sE_t,a_t)$ is (semi)stable on $X\times \set{t}$ for all $t\in T$. Then, one can define functors
\begin{equation}
    \begin{split}
        \mathcal{M}_\alpha^{(s)s}(P,E_0): & (Sch/k)^{op} \rightarrow  Sets
    \end{split}
\end{equation}
associating to a scheme $S$ the set of isomorphism classes of families of (semi)stable pairs $a\colon\sE\to p_X^*E_0$ parametrized by $S$, with $P_{\sE_t}=P$ for all $t\in T$.

The main result of \cite{HL95} is:

\begin{thm}[{\cite[Thm. 1.21]{HL95}}]
Let $X$ be a smooth projective variety of dimension one or two. Then there is a projective $k$-scheme $M_\alpha^{ss}(P,E_0)$ which corepresents $\mathcal{M}_\alpha^{(s)s}(P,E_0)$. Moreover, there is an open subscheme $M_\alpha^{s}(P,E_0)\subset M_\alpha^{ss}(P,E_0)$ representing the subfunctor $\mathcal{M}_\alpha^{s}(P,E_0)$.
\end{thm}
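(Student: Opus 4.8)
The plan is to reproduce the GIT construction of moduli of sheaves, adapted to pairs, following \cite{HL95}. \textbf{Boundedness.} First I would show that the sheaves $E$ underlying an $\alpha$-semistable pair $(E,a)$ with $P_E=P$ form a bounded family. By the implications recorded after Definition \ref{def_mu_semistability_pairs} such a pair is $\mu$-semistable, so Lemma \ref{lem_ker(a)_tor-free} applies and, together with condition \eqref{it_mu_semist_E} of Definition \ref{def_mu_semistability_pairs}, it bounds the slopes of all subsheaves of $E$ from above in terms of $\alpha_1$, $\deg(E)$ and $\rk(E)$. Grothendieck's boundedness lemma, in the effective Le Potier--Simpson form available in dimension $\le 2$, then gives boundedness. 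Hence there is an $m_0$ such that for every $m\ge m_0$ and every such pair the sheaf $E(m)$ is globally generated, $H^{>0}(E(m))=0$ and $h^0(E(m))=P(m)$, with an analogous regularity statement holding simultaneously for all saturated subsheaves of $E$.

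\textbf{Parameter space and linearization.} Fix $m\ge m_0$ and a vector space $V$ of dimension $P(m)$. Every semistable pair then comes from a quotient $q\colon V\otimes\sO(-m)\twoheadrightarrow E$ together with the composite $a\circ q\in\Hom(V\otimes\sO(-m),E_0)=V^\vee\otimes H^0(E_0(m))$, so I would form the locally closed subscheme $R$ of $\Quot(V\otimes\sO(-m),P)\times\mathbb{P}(V^\vee\otimes H^0(E_0(m)))$ consisting of those $(q,[\varphi])$ for which $\varphi$ factors through the universal quotient; projectivizing the second factor accounts for rescaling $a$. The group $G=\SL(V)$ acts on $R$ with orbits equal to isomorphism classes of pairs. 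Embedding $\Quot(V\otimes\sO(-m),P)$ into a Grassmannian $\Gr(V\otimes H^0(\sO(\ell-m)),P(\ell))$ for $\ell\gg m$ by evaluation provides a $G$-linearized very ample bundle; twisting it by an appropriate power of $\sO(1)$ from the $\mathbb{P}$-factor, with the power dictated by $\alpha$, gives the linearization on a projective closure of $R$ to which Mumford's theory applies, so $R^{ss}/\!\!/G$ is projective and $R^s/G$ an open geometric quotient.

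\textbf{Hilbert--Mumford comparison and conclusion.} The core is to identify, for $m\gg 0$, the GIT (semi)stable locus of $R$ with the locus of $\alpha$-(semi)stable pairs. A one-parameter subgroup of $G$ is a weighted filtration of $V$; pushing it through $q$ gives a filtration $0\subset E^{(1)}\subset\cdots\subset E$, and the Mumford weight is, modulo a universal leading term, a positive linear combination of the numbers $h^0(E^{(i)}(\ell))$ plus a contribution of the $\mathbb{P}$-factor that records for which $i$ the map $a$ annihilates $E^{(i)}$, i.e.\ whether $E^{(i)}\subseteq\ker a$. For $\ell\gg m\gg 0$ the boundedness step lets one replace each $h^0(E^{(i)}(\ell))$ by $P_{E^{(i)}}(\ell)$, and the inequality ``weight $\ge 0$ for all one-parameter subgroups'' becomes exactly conditions \eqref{it_HL_semist_kernel} and \eqref{it_HL_semist_E} of Definition \ref{def_HL_stability_pairs}. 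Setting $M_\alpha^{ss}(P,E_0):=R^{ss}/\!\!/G$, corepresentability of $\mathcal M_\alpha^{(s)s}(P,E_0)$ follows from the universal property of the GIT quotient: a family of semistable pairs over $S$ produces, after twisting by $m$ and trivializing the pushforward locally, a principal $G$-bundle over $S$ with a $G$-map to $R^{ss}$, hence a well-defined classifying morphism $S\to M_\alpha^{ss}(P,E_0)$, with $S$-equivalent families mapping to the same point by construction. On the stable part a stable pair is simple, so $G$ acts with trivial stabilizers on $R^s$, the quotient $M_\alpha^s(P,E_0):=R^s/G$ is a fine moduli space representing $\mathcal M_\alpha^s(P,E_0)$, and it is open in $M_\alpha^{ss}(P,E_0)$.

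\textbf{Main obstacle.} The difficult step is the ``pair-semistable $\Rightarrow$ GIT-semistable'' direction of the comparison, with a single $m$ valid for all pairs and all one-parameter subgroups at once. This requires uniform estimates bounding $h^0$ of an arbitrary subsheaf of $E$ in terms of its Hilbert polynomial, of Le Potier--Simpson type, which is precisely where the hypothesis $\dim X\le 2$ enters: combined with the boundedness of the first step, these estimates cut the a priori infinite collection of potentially destabilizing weighted filtrations down to a controlled, uniform family, so that a threshold $m$ can be chosen once and for all. In higher dimensions the analogous estimates and boundedness statements are more delicate, which is why \cite{HL95} restricts to curves and surfaces.
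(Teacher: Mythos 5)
This theorem is not proved in the paper at all: it is imported verbatim from \cite{HL95} (their Theorem 1.21) and used as a black box, so there is no internal proof to compare your argument against. What you have written is a reasonable reconstruction of the strategy of the cited source itself: boundedness of the family of semistable pairs, a parameter space inside $\Quot\times\mathbb{P}(V^\vee\otimes H^0(E_0(m)))$, a linearization whose weight on the $\mathbb{P}$-factor encodes the parameter $\alpha$, a Hilbert--Mumford computation matching GIT (semi)stability with conditions \eqref{it_HL_semist_kernel} and \eqref{it_HL_semist_E} of Definition \ref{def_HL_stability_pairs}, and the observation that stable pairs have trivial automorphisms, which is what upgrades corepresentability to honest representability on the stable locus. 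You also correctly locate where the hypothesis $\dim X\le 2$ enters, namely in the uniform Le Potier--Simpson type $h^0$-estimates needed to choose a single twist $m$ working for all destabilizing filtrations. Two points are glossed over and would need care in a full write-up: the identification of points of $R^{ss}/\!\!/G$ with $S$-equivalence classes of semistable pairs (closed orbits), which is what ``corepresents'' is really asserting, and the fact that the stability parameter is a polynomial of degree $\dim X-1$ rather than a single number, so the linearization must be chosen compatibly with evaluation at $\ell\gg 0$. Neither is an obstacle; both are handled in \cite{HL95}.
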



Now we move back to the general setting, and compare the functors $\mathcal{M}_{\alpha}^{(s)s}(P,E_0)$ with certain Quot functors in a tilt of $\Coh(X)$. Fix a positive rational polynomial $\delta$ of degree $e-1$, and recall the categories $\sT^\delta$, $\sF^\delta$ and $\Coh^\delta(X)$ of Definition \ref{def_Gieseker_tilt}. Then we have:

\begin{lemma}\label{lem_implication_pairs_injections}
Pick $E_0\in\sT^\delta$. Suppose $a\colon E\to E_0$ is a stable pair with respect to $\alpha\coloneqq p_E-\delta\rk(E)$. Then $a$ is a monomorphism in $\Coh^\delta(X)$. Conversely, a monomorphism $a\colon E \to E_0$ in $\Coh^\delta(X)$ is a semistable pair with respect to $\alpha$. 

Now pick $\beta\in \R$ and suppose $E_0\in\sT^\beta$, and $a\colon E\to E_0$ is a $\mu$-stable pair with respect to $\alpha_1\coloneqq \mu_E-\beta\rk(E)$. Then $a$ is a monomorphism in $\Coh^\beta(X)$. Conversely, a monomorphism $a\colon E \to E_0$ in $\Coh^\beta(X)$ is a $\mu$-semistable pair with respect to $\alpha_1$. 
\end{lemma}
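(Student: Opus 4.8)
The plan is to prove the four implications directly from the definitions, exploiting the structure of the tilted heart $\Coh^\delta(X)$ (resp. $\Coh^\beta(X)$) in terms of the torsion pair $(\sT^\delta,\sF^\delta)$ (resp. $(\sT^\beta,\sF^\beta)$). Recall that a morphism $a\colon E\to E_0$ with $E,E_0\in\Coh(X)\cap\Coh^\delta(X)=\sT^\delta$ fails to be a monomorphism in $\Coh^\delta(X)$ precisely when the kernel sheaf $\ker(a)$ (taken in $\Coh(X)$) is nonzero \emph{and} lies in $\Coh^\delta(X)$, i.e. $\ker(a)\in\sT^\delta$; otherwise $H^{-1}$ of the mapping cone lands in $\sF^\delta$ so the complex is a genuine object of the heart but $a$ has no kernel there. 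So the first thing I would record is the elementary categorical observation: \emph{$a$ is a monomorphism in $\Coh^\delta(X)$ iff $\ker(a)$, as a sheaf, does not belong to $\sT^\delta$}, equivalently iff every nonzero subsheaf $G\subseteq\ker(a)$ has $p_G\le \frac{t^e}{e!}+\delta$. The hypotheses $E_0\in\sT^\delta$ guarantees $E_0$ (and hence $E$, as a subobject in the heart of $E_0$ once we know $a$ is mono) sits in the heart, so this reduces everything to a statement about $\ker(a)$.

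First I would do the monomorphism direction. Assume $a\colon E\to E_0$ is a stable pair with respect to $\alpha\coloneqq p_E-\delta\,\rk(E)$, and suppose for contradiction $\ker(a)$ has a nonzero subsheaf $G$ with $p_G>\frac{t^e}{e!}+\delta$. Since $G\subseteq\ker(a)$, stability condition \eqref{it_HL_semist_kernel} gives $\rk(E)P_G<\rk(G)(P_E-\alpha)$. Now I would unwind: plugging $\alpha=p_E-\delta\rk(E)$ gives $P_E-\alpha=P_E-p_E+\delta\rk(E)$; dividing through by $\rk(E)\rk(G)$ (both positive — note $\rk(G)>0$ since $G\not\in\sT^\delta$ forces $G$ non-torsion, using that torsion sheaves have $p=+\infty\in\sT^\delta$) turns the inequality into $p_G<\tfrac{P_E}{\rk(E)}-p_E+\delta\rk(E)$... here I need to be careful with the normalization of $p_E$ versus $P_E$; the correct bookkeeping is $P_E=\rk(E)\frac{t^e}{e!}+(\text{lower order})$ and $p_E=\frac{P_E}{\rk(E)}$ up to the leading normalization used in \cite{HL10}, so the right-hand side collapses to exactly $\frac{t^e}{e!}+\delta$, contradicting $p_G>\frac{t^e}{e!}+\delta$. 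Hence no such $G$ exists, $\ker(a)\notin\sT^\delta$, and $a$ is a monomorphism in $\Coh^\delta(X)$. For the converse, assume $a$ is a monomorphism in $\Coh^\delta(X)$; then $E\in\sT^\delta$ as a subobject of $E_0\in\sT^\delta$ (the heart is closed under subobjects of $\sT^\delta$-objects... actually I should just note $E$ is a sheaf with $E\hookrightarrow E_0$ in $\Coh^\delta$, so $H^{-1}$ of its cone vanishes and $E\in\sT^\delta$), and $\ker(a)\notin\sT^\delta$ means every subsheaf $G\subseteq\ker(a)$ satisfies $p_G\le\frac{t^e}{e!}+\delta$. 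Running the computation above in reverse (with non-strict inequalities) recovers condition \eqref{it_HL_semist_kernel} in the $\le$ form. Condition \eqref{it_HL_semist_E} for arbitrary $G\subseteq E$ needs a separate short argument: $E\in\sT^\delta$ means every quotient $E\twoheadrightarrow Q\ne0$ has $p_Q>\frac{t^e}{e!}+\delta$; applying this to $Q=E/G$ and translating via the same algebra yields $\rk(E)P_G<\rk(G)(P_E-\alpha)+\rk(E)\alpha$ — again in the $\le$ (semistable) form if one is careful, or actually strict since $\sT^\delta$ uses strict inequality on quotients. So semistability (indeed the $\mu$-type strictness on the $E$-condition) comes out cleanly.

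The $\mu$-stability statement is entirely parallel, replacing Hilbert polynomials by degrees and $\Coh^\delta$ by $\Coh^\beta$: a morphism $a\colon E\to E_0$ with $E_0\in\sT^\beta$ is a monomorphism in $\Coh^\beta(X)$ iff $\ker(a)\notin\sT^\beta$ iff every nonzero subsheaf $G\subseteq\ker(a)$ has $\mu(G)\le\beta$. Assuming $\mu$-stability of $(E,a)$ with $\alpha_1\coloneqq\mu_E-\beta\,\rk(E)$, condition \eqref{it_mu_semist_kernel} gives $\rk(E)\deg(G)<\rk(G)(\deg(E)-\alpha_1)$; substituting $\alpha_1$ and dividing by $\rk(E)\rk(G)$ (with $\rk(G)>0$ by Lemma \ref{lem_ker(a)_tor-free} applied to the destabilizing sub, or directly since torsion is in $\sT^\beta$) yields $\mu(G)<\beta$, so $\ker(a)\notin\sT^\beta$ and $a$ is a monomorphism in $\Coh^\beta(X)$. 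Conversely a monomorphism in $\Coh^\beta(X)$ forces $E\in\sT^\beta$, gives $\mu(G)\le\beta$ for all $G\subseteq\ker(a)$ hence \eqref{it_mu_semist_kernel} with $\le$, and $E\in\sT^\beta$ gives the quotient condition translating to \eqref{it_mu_semist_E}, so $(E,a)$ is $\mu$-semistable with respect to $\alpha_1$.

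The main obstacle, and the only place where care is genuinely needed, is the normalization bookkeeping: matching the \emph{reduced} Hilbert polynomial $p_E$ (leading term $\frac{t^e}{e!}$) against the \emph{unnormalized} $P_E$ appearing in Definition \ref{def_HL_stability_pairs}, and confirming that the particular choice $\alpha=p_E-\delta\,\rk(E)$ (resp. $\alpha_1=\mu_E-\beta\,\rk(E)$) is exactly the one that makes the threshold in the stability inequality coincide with the wall $p_G\lessgtr\frac{t^e}{e!}+\delta$ (resp. $\mu(G)\lessgtr\beta$) defining the torsion pair. Once that dictionary is set up once, all four implications are the same one-line manipulation, and the strict-vs-nonstrict discrepancy between the "quotient" strictness built into $\sT^\delta$/$\sT^\beta$ and the symmetric $<(\le)$ in the pair-stability conditions accounts precisely for why one direction yields stability and the reverse only yields semistability, exactly as in the curve case (Lemma \ref{lem_Bradlow_pairs_are_quotients}).
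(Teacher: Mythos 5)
Your proposal follows the same route as the paper's proof: the strict form of condition \eqref{it_HL_semist_E} of Definition \ref{def_HL_stability_pairs} is equivalent to $E\in\sT^\delta$, condition \eqref{it_HL_semist_kernel} is equivalent (strict versus non-strict) to $\ker(a)\in\sF^\delta$, and these two memberships are exactly what make $\ker(a)\to E\xrightarrow{a} E_0\to\coker(a)$ exhibit $a$ as a monomorphism in the tilted heart, with the strict/non-strict discrepancy accounting for stable in one direction and only semistable in the other, just as in Lemma \ref{lem_Bradlow_pairs_are_quotients}.

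One correction is needed: your foundational ``categorical observation'' is misstated. A map $a\colon E\to E_0$ between objects of $\sT^\delta$ is a monomorphism in $\Coh^\delta(X)$ if and only if the sheaf kernel lies in $\sF^\delta$ (so that the cone, which has $H^{-1}=\ker(a)$ and $H^0=\coker(a)$, again lies in the heart), \emph{not} if and only if $\ker(a)\notin\sT^\delta$. Failing to lie in $\sT^\delta$ only says that \emph{some} quotient of $\ker(a)$ has small reduced Hilbert polynomial, whereas membership in $\sF^\delta$ requires \emph{every} nonzero subsheaf to have small reduced Hilbert polynomial; a kernel with a destabilizing (e.g.\ torsion) subsheaf would satisfy the former but not the latter, and $a$ would still fail to be a monomorphism. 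Your actual computations do verify the correct condition --- you bound $p_G$ for every nonzero subsheaf $G\subseteq\ker(a)$ --- so the argument survives once the criterion is stated correctly, but as written the stated equivalence is false. Two smaller points: in the forward direction you must obtain $E\in\sT^\delta$ from the strict form of condition \eqref{it_HL_semist_E} before the question of monomorphy in $\Coh^\delta(X)$ even makes sense (your parenthetical ``once we know $a$ is mono'' is circular there); and the normalization mismatch you flag between $P_E$, $p_E$ and the threshold $\frac{t^e}{e!}+\delta$ is real, but it is present in the paper's own proof as well, which simply writes $p_G<p_E-\alpha/\rk(E)=\delta$.
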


\begin{proof} The proof is analogous to that of Lemma \ref{lem_Bradlow_pairs_are_quotients}.
The strict version of condition \eqref{it_HL_semist_E} in Definition \ref{def_HL_stability_pairs} holds if and only if $E\in\sT^\delta$. On the other hand, since $\ker(a)$ is torsion free by Lemma \ref{lem_ker(a)_tor-free}, condition \eqref{it_HL_semist_kernel} of Definition \ref{def_HL_stability_pairs} can be rewritten as
\begin{equation}
    \label{eq_pair_to_tilt_ker(a)}
    p_G < p_E - \frac{\alpha}{\rk(E)}=\delta
\end{equation}
for all nonzero $G\subseteq \ker(a)$, which implies $\ker(a)\in \sF^\delta$. 
Then, the exact sequence
$$ \ker(a) \to E\xrightarrow{a} E_0 \to \coker(a) $$
exhibits $a$ as an injection in $\Coh^\delta(X)$.
Conversely, $\ker(a)\in \sF^\delta$ is equivalent to the non-strict version of \eqref{eq_pair_to_tilt_ker(a)}, which yields semistability for $(E,a)$. The statements about slope-stability are completely analogous.
\end{proof}

For generic values of the parameter, the notions of $\mu$-stability and $\mu$-semistability coincide:

\begin{lemma}[{\cite[Lemma 2.2]{HL95}}]\label{lem_HL_slope_critical_vals_pairs} There exists a discrete set of rational numbers $\set{\eta_i}$ such that for $\delta_1\in(\eta_i,\eta_{i+1})$ every $\mu$-semistable pair with respect to $\alpha$ is in fact $\mu$-stable and $\mu$-stability only depends on $i$.
\end{lemma}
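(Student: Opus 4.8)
The plan is a routine wall-and-chamber argument on the parameter line. Fix the Hilbert polynomial $P$, so that $r\coloneqq\rk(E)$ and $d\coloneqq\deg(E)$ are the same for all pairs in question, and note that $\delta_1$ differs from the parameter $\alpha_1$ of Definition \ref{def_mu_semistability_pairs} by a bijective affine change of coordinates, so it suffices to exhibit the critical set in the $\alpha_1$-variable. Unwinding Definition \ref{def_mu_semistability_pairs}, and testing — as is standard for $\mu$-stability — on saturated subsheaves, a pair $(E,a)$ is $\mu$-semistable with respect to $\alpha_1$ exactly when, for every saturated subsheaf $G\subseteq E$ with $r'\coloneqq\rk(G)$ and $d'\coloneqq\deg(G)$,
\[ L^{\mathrm{II}}_{r',d'}(\alpha_1)\coloneqq r'd-rd'+(r-r')\alpha_1\ \geq\ 0, \]
and moreover, when $G\subseteq\ker(a)$ (recall $\ker(a)$ is torsion-free by Lemma \ref{lem_ker(a)_tor-free}),
\[ L^{\mathrm{I}}_{r',d'}(\alpha_1)\coloneqq r'd-rd'-r'\alpha_1\ \geq\ 0; \]
the pair is $\mu$-stable exactly when all of these inequalities are strict. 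Since $G$ is saturated we have $r'\leq r-1$ in the first family (if $r'=r$ then $G=E$) and $r'\geq 1$ always, so each $L^{\bullet}_{r',d'}$ is a non-constant affine function of $\alpha_1$ with integer coefficients, and it depends on $G$ only through $(r',d')$ and the dichotomy ``$G\subseteq\ker(a)$ or not''.

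The first step I would carry out is to show that the set $W$ of all zeros of all the functions $L^{\bullet}_{r',d'}$, as $(r',d')$ runs over $\{1,\dots,r-1\}\times\Z$ for the $L^{\mathrm{II}}$-family and over $\{1,\dots,r\}\times\Z$ for the $L^{\mathrm{I}}$-family (allowing $r'=r$ to cover the degenerate case of torsion image), is discrete. Indeed $L^{\mathrm{I}}_{r',d'}$ vanishes at $d-\tfrac{r}{r'}d'$ and $L^{\mathrm{II}}_{r',d'}$ at $\tfrac{rd'-r'd}{\,r-r'\,}$; for each fixed $r'$ these points form an arithmetic progression of common difference $\tfrac{r}{r'}\geq 1$, resp.\ $\tfrac{r}{r-r'}>1$, so each such progression meets any bounded interval in boundedly many points, and there are only finitely many values of $r'$. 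Hence $W$ is locally finite; I would enumerate it as $\{\eta_i\}\subset\Q$ and transport it to the $\delta_1$-line.

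The conclusion is then immediate. On an open chamber $(\eta_i,\eta_{i+1})$ none of the affine functions $L^{\bullet}_{r',d'}$ vanishes, so each has constant sign there; consequently the truth value of the conjunction of strict inequalities ``$L^{\bullet}_{\mathrm{type}(G)}(\alpha_1)>0$ for all saturated $G$'' is independent of $\alpha_1$ in that chamber, which is exactly the statement that $\mu$-stability depends only on $i$. Moreover, if $(E,a)$ is $\mu$-semistable with $\alpha_1$ in this chamber, then $L^{\bullet}_{\mathrm{type}(G)}(\alpha_1)\geq 0$ for every relevant $G$; were one of these an equality, $\alpha_1$ would be a zero of the corresponding $L^{\bullet}$, hence would lie in $W$, contradicting that $\alpha_1$ is interior to the chamber. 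Thus all inequalities are strict and $(E,a)$ is $\mu$-stable.

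I expect the only point requiring genuine care to be the local finiteness of $W$ in the second step: this is where the hypothesis really enters, through the bound $\rk(G)\leq\rk(E)=r$, which forces the denominators $r'$ and $r-r'$ to be bounded and hence the critical arithmetic progressions to have common difference bounded below. One should also be careful to adopt the usual convention that $\mu$-(semi)stability is tested on saturated subsheaves (equivalently, on torsion-free quotients), so that full-rank subsheaves with torsion quotient do not contribute spurious parameter-independent obstructions; the remaining degenerate case, a subsheaf of full rank inside $\ker(a)$, is already accounted for by including $r'=r$ in the $L^{\mathrm{I}}$-family above.
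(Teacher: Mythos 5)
The paper gives no proof of this lemma at all: it is imported verbatim from \cite[Lemma 2.2]{HL95}, so there is no internal argument to compare yours against. Your wall-and-chamber reconstruction is the standard one and is essentially correct: unwinding Definition \ref{def_mu_semistability_pairs} does give the two families of affine functions $r'd-rd'-r'\alpha_1$ and $r'd-rd'+(r-r')\alpha_1$, and the key point — that for each of the finitely many admissible values of $r'$ the zeros form an arithmetic progression with common difference bounded below by $1$, so the union of walls is locally finite — is exactly right, as is the deduction of both conclusions from constancy of sign on a chamber. Two conventions you adopt silently are worth a sentence each. First, Definition \ref{def_mu_semistability_pairs}(ii) as literally written also constrains rank-zero subsheaves $G\subseteq E$ (the condition degenerates to $\deg(G)\leq\alpha_1$), which your reduction to saturated subsheaves of positive rank does not cover; these only contribute integer walls, so discreteness is unaffected, but they should be acknowledged since Lemma \ref{lem_ker(a)_tor-free} only rules out torsion inside $\ker(a)$, not inside $E$. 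Second, the restriction to \emph{proper, saturated} subsheaves in the strict inequality of (ii) is not merely cosmetic: taken literally, (ii) with $G=E$ reads $0<0$, and on a surface a full-rank subsheaf of the form $I_Z\cdot E$ gives a constant $L\equiv 0$ that would make stability vacuous and break your ``semistable $\Rightarrow$ stable on chambers'' step; so the convention you invoke is actually load-bearing and deserves to be stated as part of the definition rather than assumed. Neither point is a genuine gap in the mathematics.
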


In other words, writing $\delta_1=\mu(E)-\beta\rk(E)$, and letting $v\in N(X)$ such that $\int_S v\cdot \ch{}(\sO_X(tH))\td(S)=P_{E_0}-P$, we can compare moduli spaces of HL-pairs and Quot schemes away from the critical values of the parameters:

\begin{corollary}
\label{cor_general_HL_comparison}
There is a discrete set of rational numbers $\Gamma$ such that if $\beta\in\R\setminus \Gamma$ and $E_0\in\sT^\beta$ then we have an isomorphism of functors
\begin{equation}
 \mathcal{M}_{\alpha}^{s}(P,E_0) \simeq  \Quot_{X/\text{pt}}^{\underline{\tau}^\beta}(E_0,v). \end{equation}
\end{corollary}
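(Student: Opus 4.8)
The plan is to build the isomorphism directly on $T$-points, for every $k$-scheme $T$, through the correspondence ``pair $\leftrightarrow$ cone''. Given a flat family of pairs $(\sE,a\colon\sE\to p_X^*E_0)$ over $T$, I would complete $a$ to a triangle $\sE\to p_X^*E_0\to\sQ\to\sE[1]$ in $D(X_T)$ and send it to the morphism $p_X^*E_0\to\sQ$; conversely, from a morphism $p_X^*E_0\to\sQ$ in $\Quot_{X/\text{pt}}^{\underline{\tau}^\beta}(E_0,v)(T)$ I would take the triangle $\sE\to p_X^*E_0\to\sQ\to\sE[1]$ and keep the induced map $a\colon\sE\to p_X^*E_0$. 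These two operations are inverse to one another, commute with base change along $T'\to T$ (cones commute with derived pullback), and match the two equivalence relations (an isomorphism of pairs over $p_X^*E_0$ is the same datum as an isomorphism of the corresponding cones over $p_X^*E_0$). So the statement reduces to checking that each operation lands in the correct functor.

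First I would fix the parameters: put $\alpha_1\coloneqq\mu(E)-\beta\rk(E)$, with $\rk(E),\deg(E)$ the numerical data read off from $P$, and let $\Gamma$ be the preimage, under the affine bijection $\beta\mapsto\alpha_1$, of the discrete set $\{\eta_i\}$ from Lemma \ref{lem_HL_slope_critical_vals_pairs}. For $\beta\notin\Gamma$ the chain $\mu$-stable $\Rightarrow$ stable $\Rightarrow$ semistable $\Rightarrow$ $\mu$-semistable $\Rightarrow$ $\mu$-stable (the last step being Lemma \ref{lem_HL_slope_critical_vals_pairs}) closes up, so $\alpha$-stability of a pair coincides with $\mu$-stability with respect to $\alpha_1$ and depends only on $\alpha_1$. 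Then the slope part of Lemma \ref{lem_implication_pairs_injections} supplies the fiberwise dictionary: for such $\beta$, the pair $(\sE_t,a_t)$ is $\alpha$-stable precisely when $a_t$ is a monomorphism in $(\sA_{qc})_t$ (which at closed points is $\Coh^\beta(X)$), in which case $\sQ_t=\operatorname{Cone}(a_t)$ is its cokernel there, the map $E_0\to\sQ_t$ is an epimorphism, and $[\sQ_t]=[E_0]-[\sE_t]=v$ (on curves and surfaces the Hilbert polynomial $P_{E_0}-P$ pins $v$ down; in general this is the definition of $v$). Conversely, for $p_X^*E_0\to\sQ$ in the Quot functor, each $\sE_t$ is the kernel of $E_0\to\sQ_t$ in the tilted heart; since $E_0\in\sT^\beta$ is a sheaf, the long exact cohomology sequence of $0\to\sE_t\to E_0\to\sQ_t\to 0$ forces $H^{-1}(\sE_t)=0$, so $\sE_t$ is a sheaf in $\sT^\beta$, and then $(\sE_t,a_t)$ is $\mu$-semistable, hence $\alpha$-stable.

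What remains is to globalize these pointwise facts over $T$, and I expect this to be the only real obstacle. The direction pairs $\to$ quotients is painless: $\sQ=\operatorname{Cone}(a)$ is $T$-perfect because $\sE$ and $p_X^*E_0$ are, its fibers $\sQ_t=\operatorname{Cone}(a_t)$ all lie in $(\sA_{qc})_t$ by the above, which is exactly $T$-flatness with respect to $\underline{\tau}^\beta$, and fiberwise surjectivity and the class $v$ have already been checked. The harder direction requires showing that the complex $\sE$ coming out of a Quot-family is genuinely a sheaf flat over $T$; I would deduce this from the standard fact that a $T$-perfect complex on $X\times T$ whose derived fibers are all sheaves is quasi-isomorphic to a $T$-flat sheaf, invoking it from \cite{BLMNPS19} (or \cite[\S 2.1]{HL10}). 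Once $\sE$ is a $T$-flat sheaf, $a\colon\sE\to p_X^*E_0$ is an honest morphism of sheaves and $(\sE,a)$ is a flat family with $\alpha$-stable fibers. Putting this together, the cone and kernel constructions give mutually inverse natural transformations between $\mathcal{M}_\alpha^{s}(P,E_0)$ and $\Quot_{X/\text{pt}}^{\underline{\tau}^\beta}(E_0,v)$, which is the claimed isomorphism of functors; combined with \cite[Thm. 1.21]{HL95} it also shows the Quot functor is represented by a projective scheme, as anticipated in \S\ref{Sec_premise}. The requirement that everything hold at non-closed points $t$ too --- where the relevant heart is a base change of $\Coh^\beta$ over a non-algebraically-closed field --- is taken care of by the base-change formalism of \cite{BLMNPS19}.
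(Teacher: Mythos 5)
Your argument is correct and follows exactly the route the paper intends: the paper states this corollary without a written proof, as an immediate consequence of combining Lemma \ref{lem_implication_pairs_injections} (the fiberwise dictionary between $\mu$-stable pairs and monomorphisms in $\Coh^\beta(X)$, with cokernel of class $v$) with Lemma \ref{lem_HL_slope_critical_vals_pairs} (which closes the implication chain for $\beta$ outside the discrete set $\Gamma$, so that $\alpha$-stability, $\mu$-stability and $\mu$-semistability all coincide). Your additional care at the level of families --- cones commuting with base change, and a $T$-perfect complex with sheaf fibers being a $T$-flat sheaf --- fills in details the paper leaves implicit but does not change the approach.
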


\subsection{Huybrechts--Lehn pairs on surfaces}

In this section we study the behaviour of moduli spaces of rank 2 $\delta$-stable pairs $E\to \sO_S$ on a smooth projective surface $S$, when $\delta=\delta_1t+\delta_2$ varies. Fix an ample class on $S$.

For a topological type $w=(w_0,w_1,w_2)$ in $H^*(S)$ (with $w_0\neq 0$), we denote 
$$\delta_w(t)\coloneqq - \frac{t^2H^2}{2} - \int_S \frac{w}{w_0} \cdot \ch{}(\sO_S(t))\cdot \td(S). $$ 
The polynomial $\delta_w(t)$ is the linear truncation of the reduced Hilbert polynomial of a sheaf of Chern character $w$ on $S$, and can be thought of as representing the Gieseker slope of the class $w$. Fix Chern characters $e\coloneqq \ch{}(E)=(2,L,c)$ and $v\coloneqq e-(1,0,0)=(1,L,c)$ on $S$, where $L$ is a line bundle of degree $-d\leq 0$.
For $\delta$ as above, consider the scheme
\[\sQ^\delta(v)\coloneqq \Quot_{S/\text{pt}}^{\underline{\tau}^\delta}(\sO_S,-v)(\text{pt}).\] The discussion below has the same structure of Section \ref{sec_Bradlow_pairs_on_curves}, and its results are summarized in the following Theorem.

\begin{thm}\label{thm_summary_HLsurfaces}
Let $P_e$ denote the Hilbert polynomial of an object of class $e$, and $p_e$ the reduced one. Define $\alpha=-2\delta -p_e$.  If $\mathcal{M}_\alpha^{ss}(P_e,\sO_S)\simeq \mathcal{M}_\alpha^{s}(P_e,\sO_S)$, then there is an isomorphism
\[ \sQ^{\delta}(v)\simeq  \mathcal{M}_\alpha^{s}(P_e,\sO_S). \]
Otherwise, $\sQ^{\delta}(v)\simeq  \mathcal{M}_{\alpha'}^{s}(2,D)$, where $\alpha'=\alpha +\epsilon$ for some $0<\epsilon \ll 1$.
Moreover, we obtain a sequence of moduli spaces and birational maps
\begin{equation}
    \label{eq_HL_pairs_diagram}
    \begin{tikzcd}
\sQ^{\delta_e} \arrow[dashed, "\phi_{\delta_{e}}"]{r} & \cdots \arrow[dashed, "\phi_{\gamma_2}"]{r} &  \sQ^{\delta_{v+(0,0,1)}} \arrow[dashed, "\phi_{\gamma_{min}}"]{r} & \sQ^{\delta_{v}}.
\end{tikzcd}
\end{equation}
\end{thm}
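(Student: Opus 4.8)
The plan is to mirror the structure of the proof of Theorem~\ref{thm_summary_curves}, adapting each ingredient from curves to surfaces. First I would prove the analog of Lemma~\ref{lem_Bradlow_pairs_are_quotients} in this setting, which is essentially already done: Lemma~\ref{lem_implication_pairs_injections} (in its $\delta$-stability form, combined with the involution idea of Section~\ref{sec_duality}) identifies $\delta$-stable HL-pairs $E\to\sO_S$ with monomorphisms in $\Coh^\delta(S)$, hence dually with epimorphisms $\sO_S\to$ (something) — or more directly, one uses the cokernel exact sequence $\ker(a)\to E\to\sO_S\to\coker(a)$ to realize the data of the pair $(E,a)$ as a quotient of $\sO_S$ in the tilted heart with class $-v$. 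Corollary~\ref{cor_general_HL_comparison} then gives the isomorphism $\sQ^\delta(v)\simeq\mathcal{M}_\alpha^s(P_e,\sO_S)$ for generic $\delta$, with the translation $\alpha=-2\delta-p_e$ coming from the numerology $\alpha\coloneqq p_E-\delta\rk(E)$ in Lemma~\ref{lem_implication_pairs_injections} together with $\rk(E)=2$. The boundary/semistable caveat (the "otherwise" clause) is handled exactly as in Theorem~\ref{thm_summary_curves}: when strict and non-strict stability disagree one perturbs $\alpha$ to $\alpha'=\alpha+\epsilon$, since $\mu$-stability (and Gieseker-type stability) of pairs only depends on the chamber by Lemma~\ref{lem_HL_slope_critical_vals_pairs}.

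Next I would establish the wall-and-chamber structure, i.e. the surface analog of Proposition~\ref{prop_descr_of_walls}. The walls are the values of $\delta$ where a quotient acquires a subobject/quotient whose reduced Hilbert polynomial equals $\tfrac{t^2}{2}+\delta$; concretely these are the polynomials $\delta_w$ for intermediate classes $w$ between $v$ and $e=v+(1,0,0)$, and the relevant ones for this range are indexed by rank-$1$ destabilizers, giving the finite chain of chambers with endpoints $\delta_e$ and $\delta_v$ as in diagram~\eqref{eq_HL_pairs_diagram}. At each wall $\gamma$, a $\gamma$-critical object fits in a diagram analogous to the curve case: a subsheaf $E'\hookrightarrow E$ of rank $1$ with $\sO_S\to E'$ factoring the section, and a quotient that is (generically) an ideal-sheaf-twisted extension. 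The two critical loci $W^\pm$ over the relevant base (a moduli of rank-$1$ objects, i.e. a product of a Picard variety and a Hilbert scheme of points, the higher-dimensional replacement for $C^{(n)}$) are then projective bundles built from $\Ext^1$-groups, and $\phi_\gamma$ is the birational map flipping one for the other, an isomorphism off $W^\pm$. I would be somewhat careful here: unlike on a curve, on a surface these $\Ext^1$ groups need not have constant rank, so one may only get that $W^\pm$ are projective bundles over a suitable stratum, or one works with the full HL machinery where this is handled by their GIT construction; in any case the birational identification over the generic point is what is needed for \eqref{eq_HL_pairs_diagram}.

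Finally, the birational maps in \eqref{eq_HL_pairs_diagram} should be exhibited as induced by determinant line bundles, paralleling Section~\ref{sec_determinant_bundles}: one takes the universal cokernel family $\sK$ on $\sQ^\delta(v)\times S$, forms $\lambda_\sK$ as in Definition~\ref{def_det_line_bundles}, and produces from two natural numerical choices (a "determinant-type" class and a "quotient-type" class, as with $w_1,w_2$ on the curve) a pencil of line bundles whose members are ample in the open chambers and contract exactly the critical loci $W^\pm$ at the walls; the Grothendieck--Riemann--Roch computation of Lemma~\ref{lem_GRR_computation_surfaces} gives the restriction of these bundles to the fibers of $W^\pm$ and thus shows they have the expected sign on the flipped loci. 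Assembling these pieces yields the chain of moduli spaces and birational maps asserted. \emph{The main obstacle I anticipate} is the wall-crossing geometry of Step~2: on a surface the critical loci are no longer governed by $\Ext^1$'s of line bundles of locally constant rank (the appearance of Hilbert schemes of points and the failure of $\mathrm{ext}^1$ to be constant), so identifying $W^\pm$ precisely and checking that $\phi_\gamma$ is a genuine flip rather than merely a birational correspondence requires the most care — this is exactly why the clean "projective bundle" picture of the curve case must be replaced by the coarser statement in Theorem~\ref{thm_summary_HLsurfaces} and why one leans on \cite{HL95}'s construction to get projectivity.
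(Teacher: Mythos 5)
Your proposal is correct and follows essentially the same route as the paper, which assembles the theorem from Lemma~\ref{lem_implication_pairs_injections} and Corollary~\ref{cor_general_HL_comparison} (pairs as monomorphisms/quotients in $\Coh^\delta$), Propositions~\ref{prop_boundary_cases_HL} and~\ref{prop_descr_of_walls_surfaces} (boundary cases and wall diagrams over $M(\gamma)\times\Hilb(X,P)$ with fibers $\PP(\Hom(B,\sO_Z))$ and $\PP(\Ext^1(I_Z,B))$), and the fiberwise Grothendieck--Riemann--Roch computation of Lemma~\ref{lem_GRR_computation_surfaces}. Your anticipated obstacle is exactly the one the paper concedes: it does not claim the $W_\gamma^\pm$ are projective bundles nor that the determinant bundles are ample, so only the restriction-to-fibers computation and the birational correspondence are actually established.
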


The sequence \eqref{eq_HL_pairs_diagram} of moduli spaces and birational maps is analogous to Thaddeus' diagrams. Lemma \ref{lem_GRR_computation_surfaces} indicates candidates for determinant line bundles inducing the birational maps, in the same spirit as Lemma \ref{lem-restricting_line_bundles_to_fibers}.

\begin{proposition}[Boundary cases]\label{prop_boundary_cases_HL}
\begin{enumerate}[(i)]
\item The scheme $\sQ^\delta(v)$ is non-empty only if the polynomial parameter $\delta$ satisfies $\delta_{v}\leq \delta < \delta_{e}$. 
\item \label{itm_map_Qdelta_max_to_semistable} If $\delta\geq  \delta_{(1,L/2,(c-1)/2)}$, all quotients $ E\to \sO \to Q $ in $\sQ^\delta(v)$ have $E$ Gieseker semistable.
\item \label{itm_map_Qdelta_min_to_determinants} If $\delta_{v} \leq \delta < \delta_{v+(0,0,1)}$, the kernel $E$ of every quotient in $\Coh^{\delta}(S)$ fits in a short exact sequence of sheaves
$$ B\to E\to \sO_S  $$
where $B$ is a Gieseker semistable sheaf of class $v$. Let $M(v)$ denote the moduli space of Gieseker semistable sheaves of class $v$, there is a map $\sQ^{\delta}(v)\to M(v)$ whose fiber above $B$ is the space $\PP(\Ext^1(\sO_S,B))$. 
\end{enumerate}
\end{proposition}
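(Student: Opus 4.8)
The plan is to prove all three parts by mimicking the proof of Proposition~\ref{prop_boundary_cases}, translating every condition on $\sQ^\delta(v)$ into a statement inside the tilt $\Coh^\delta(S)$ via Lemma~\ref{lem_implication_pairs_injections} and Corollary~\ref{cor_general_HL_comparison}, and replacing Mumford slopes throughout by (linear truncations of) reduced Hilbert polynomials. First I would fix notation: a point of $\sQ^\delta(v)$ is a short exact sequence $0\to E\to\sO_S\to Q\to 0$ in $\Coh^\delta(S)$ with $\ch{}(E)=e$ and $\ch{}(Q)=-v$. Since $\sO_S$ is a sheaf and $E$ is a subobject of it in the heart, $E$ is a sheaf lying in $\sT^\delta$; the long exact sheaf-cohomology sequence then identifies $H^{-1}(Q)=\ker(a)$ and $H^0(Q)=\coker(a)$ for the induced map of sheaves $a\colon E\to\sO_S$, with $\ker(a)\in\sF^\delta$, $\coker(a)\in\sT^\delta$, and $\ker(a)$ rank-one torsion-free by Lemma~\ref{lem_ker(a)_tor-free} (through the implications $\delta$-semistable $\Rightarrow$ $\mu$-semistable).

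For part (i) the upper bound is immediate: applying the defining inequality of $\sT^\delta$ to the quotient $G=E$ of itself gives $p_E>\frac{t^2}{2!}+\delta$, i.e.\ $\delta<\delta_e$. For the lower bound, observe that $\ch{}(\ker(a))=v+\ch{}(\coker(a))$ with $\coker(a)$ torsion, so the reduced Hilbert polynomial of $\ker(a)$ equals $p_v$ plus a nonnegative correction; since $\ker(a)\in\sF^\delta$ contains itself as a subsheaf, this forces $\delta\geq\delta_v$. For part (ii) I would copy item~\eqref{itm_map_Qd2_to_semistable} of Proposition~\ref{prop_boundary_cases}: a rank-two or torsion quotient of $E$ trivially has reduced Hilbert polynomial $\geq p_E$, while a rank-one quotient $G$ lies in $\sT^\delta$, so $p_G>\frac{t^2}{2!}+\delta\geq p_{(1,L/2,(c-1)/2)}$; since $(1,L/2,(c-1)/2)$ is precisely the rank-one class whose reduced Hilbert polynomial is the largest one strictly below $p_e$, the discreteness of these values upgrades this to $p_G\geq p_e=p_E$, hence $E$ is Gieseker semistable.

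For part (iii), which parallels item~\eqref{itm_map_Qd-1_to_determinants}, I would first show $\coker(a)=0$. If not, $\ch{}(\coker(a))$ is supported in dimension $\leq 1$, so $\ch{}(\ker(a))=v+\ch{}(\coker(a))$ has reduced Hilbert polynomial at least $p_v+\frac{1}{H^2}=p_{v+(0,0,1)}$ (strictly larger still if $\coker(a)$ is one-dimensional), and $\ker(a)\in\sF^\delta$ then forces $\delta\geq\delta_{v+(0,0,1)}$, contradicting the hypothesis. Hence $a$ is surjective as a sheaf map and $E$ fits in a sheaf sequence $0\to B\to E\to\sO_S\to 0$ with $B=\ker(a)$ of class $v$; being rank-one torsion-free, $B$ is automatically Gieseker semistable. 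Conversely, for $\delta$ in this interval every such extension produces $E\in\sT^\delta$ and $B\in\sF^\delta$, hence a $\delta$-semistable pair by Lemma~\ref{lem_implication_pairs_injections}, i.e.\ a point of $\sQ^\delta(v)$. The assignment $(E,a)\mapsto[B]$ is therefore a morphism $\sQ^\delta(v)\to M(v)$ whose fibre over $B$ parametrizes the nonsplit extensions of $\sO_S$ by $B$, and the relative universal extension of \cite[Ex.~2.1.12]{HL10} identifies this fibre with $\PP(\Ext^1(\sO_S,B))$ scheme-theoretically.

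I expect the main obstacle to be the sharp bookkeeping of the three thresholds $\delta_v$, $\delta_{v+(0,0,1)}$ and $\delta_{(1,L/2,(c-1)/2)}$, and in particular the vanishing of $\coker(a)$ in part (iii): unlike the curve case, where all comparisons reduce to rational numbers together with integrality of degrees, here one must carefully track how the linear truncations of the reduced Hilbert polynomials of $\ker(a)$, $E$ and $\coker(a)$ compare, exploiting both the possible one-dimensionality of $\coker(a)$ and the discreteness of the reduced Hilbert polynomials of rank-one sheaves. Verifying that these comparisons are governed exactly by the stated classes is where the polynomial — rather than purely numerical — nature of $\delta$ genuinely enters, and is the most delicate point.
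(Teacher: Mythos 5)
Your argument is correct and follows essentially the same route as the paper's own proof: parts (i) and (iii) are obtained by comparing the reduced Hilbert polynomial of $\ker(a)$, via $\ch{}(\ker(a))=v+\ch{}(\coker(a))$, against the thresholds defining $\sF^\delta$ and $\sT^\delta$, and part (ii) uses the same discreteness jump from $\delta_{(1,L/2,(c-1)/2)}$ up to $\delta_e$. If anything you supply more detail than the paper, whose proof is terse and does not spell out the converse direction in (iii) or the identification of the fibre with $\PP(\Ext^1(\sO_S,B))$.
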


\begin{proof}
If $\delta < \delta_{v}$, the sheaf kernel $H^{-1}Q$ of the map $E\xrightarrow{\alpha}\sO_S$ must have either degree less than $d$, or $\ch{2}(K)<c=\ch{2}(E)$. Either of these conditions implies that $\im\alpha$ cannot be a subsheaf of $\sO_S$. On the other hand, we have that if $\delta\geq \delta_{e}$ then no sheaf of class $e$ belongs to $\Coh^\delta$. 

If $\delta\geq  \delta_{(1,L/2,(c-1)/2)}$ all sheaf quotients of $E$ must have Gieseker slope bigger than $\delta_e$ since they lie in $\sT^\delta$, hence $E$ is semistable. 

If $\delta_{v} \leq \delta < \delta_{(1,L,c+1)}$, then $B\coloneqq H^{-1}Q$ must have slope $\delta_{v}$, and $E\to \sO_S$ is a surjection. Moreover, all subsheaves of $B$ are in $\sF^{\delta_{v}}$ and therefore do not destabilize $B$. 
\end{proof}

Now we construct the set of critical values for $\delta$. The idea is the following: every map $E\to \sO_S$ admits a sheaf kernel $H^{-1}Q$. Let $(1,\beta,n)\in N(S)$ be the numerical class of a subsheaf $B\subset H^{-1}Q$. Since $H^{-1}Q\in \sF^\delta$, $B$ is a rank 1 torsion free sheaf, hence it satisfies the Bogomolov-Gieseker inequality: $2n\leq \beta^2$. Similarly, the cokernel $G$ of $B \to E$ satisfies the Bogomolov-Gieseker inequality, and is a subsheaf of $\sO_S$. In other words, 
\[ 2(c-n)\leq (\ch 1(L)-\beta)^2. \]
Then the set $\Gamma$ below is the set of critical values for $\delta$: for a divisor class $\beta\in\NS(X)$ satisfying $\beta.H\in [-d,-\frac d2]$, let $I_\beta$ be set of integers in $\left[ c - \frac{(\ch 1(L)-\beta)^2}{2}, \frac{\beta^2}{2} \right)$. We denote by $\Gamma'$ the set $\left\lbrace (1,\beta,n) \right\rbrace$ where $\beta\in\NS(X)$ is as above and $n\in I_{\beta}$. Set 
$$\Gamma\coloneqq \set{ \delta_\gamma \st \gamma\in\Gamma' }.  $$
We can think of $\Gamma$ as a discrete set partitioning an ordered family of polynomials, varying from $\delta_{v}$ to $\delta_e$, in neighboring intervals.

\begin{proposition}[Description of walls]\label{prop_descr_of_walls_surfaces}
\begin{enumerate}[(i)]
\item We have an isomorphism $\sQ^{\delta_1}(v)\simeq \sQ^{\delta_2}(v)$ if and only if $\delta_1$ and $\delta_2$ are not separated by an element of $\Gamma$.
\item Suppose $\gamma\coloneqq (1,\beta,n)\in \Gamma'$ and $\delta^- < \delta_\gamma \leq \delta^+$ are polynomials in two neighboring intervals. Denote by $W_{\gamma}^\pm$ the subscheme of $\sQ^{\delta^\pm}(v)$ parametrizing $\gamma$-critical objects. Then there is a diagram
\begin{center}
\begin{tikzcd}
 \sQ^{\beta^-}(v) \arrow[dashed, "\phi_{\gamma}"]{r} & \sQ^{\beta^+}(v)  \\
  W^-_{\gamma} \arrow[draw=none]{u}[sloped,auto=false]{\subset} \arrow[dr, "f^-_{\gamma}"] & W^+_{\gamma} \arrow[draw=none]{u}[sloped,auto=false]{\subset}\arrow[d, "f^+_{\gamma}"] \\
 & M(\gamma) \times \Hilb(X,P)  
\end{tikzcd}
\end{center}
where $M(\gamma)$ denotes the moduli space of semistable sheaves on $X$ of class $\gamma$ (and the polynomial $P$ is constructed below), the map $\phi_{\gamma}$ is an isomorphism away from the critical loci, and the fibers of $f^-$ and $f^+$ above a point $(B,[\sO_S\to \sO_Z])$ are the projective spaces $F^-=\PP(\Hom(B,\sO_Z)$ and $F^+=\PP(\Ext^1(I_{Z},B)$.
\end{enumerate}
\end{proposition}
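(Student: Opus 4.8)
The plan is to follow the proof of Proposition~\ref{prop_descr_of_walls} almost verbatim, now over the Gieseker tilt and for HL-pairs. By Lemma~\ref{lem_implication_pairs_injections}, a point of $\sQ^\delta(v)$ is the same as an HL-pair $a\colon E\to\sO_S$ with $E$ of class $e$ lying in $\sT^\delta$ and $\ker(a)\in\sF^\delta$; here $a\neq 0$ and $\coker(a)=H^0(Q)$ is torsion, hence automatically in $\sT^\delta$, so the only $\delta$-dependent conditions are $E\in\sT^\delta$ and $\ker(a)\in\sF^\delta$. The subcategories $\sT^\delta$ and $\sF^\delta$ change only when $\delta$ passes a value $\delta_{\mathbf g}$ for which some rank-one torsion-free quotient of $E$, or some rank-one torsion-free subsheaf of $\ker(a)$, has class $\mathbf g$; by the Bogomolov--Gieseker inequality applied to such a class and to its complement (which is forced to embed in $\sO_S$), and after reducing to Harder--Narasimhan-semistable pieces, the values that can occur are exactly those collected in $\Gamma$. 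Hence the functor $\delta\mapsto\sQ^\delta(v)$ is constant on each connected component of the complement of $\Gamma$, and, the Quot functors agreeing, so do the schemes representing them; this is the ``if'' direction of~(i).

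For~(ii) fix $\gamma=(1,\beta,n)\in\Gamma'$ and $\delta^-<\delta_\gamma\leq\delta^+$ in neighbouring components. By the analysis above, a point of $\sQ^{\delta^-}(v)$ that leaves $\sQ^{\delta^+}(v)$ is one whose HL-sheaf $E$ has a Gieseker-semistable quotient $E\twoheadrightarrow B$ of class $\gamma$ --- the destabilizing quotient at the wall --- with kernel $I_Z$ embedding in $\sO_S$; I take this locus as $W^-_\gamma$. Symmetrically $W^+_\gamma\subset\sQ^{\delta^+}(v)$ is the locus where $\ker(a)$ contains a Gieseker-semistable subsheaf $B$ of class $\gamma$ with $E/B\hookrightarrow\sO_S$. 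In either case one reads off a point $(B,[\sO_S\to\sO_Z])\in M(\gamma)\times\Hilb(X,P)$, where $\sO_Z=\sO_S/I_Z$ and $P$ is the Hilbert polynomial of the class $[\sO_S]-[I_Z]$ (the polynomial $P$ of the statement), together with a residual datum. On the $\delta^-$ side this is the induced map $\bar a\colon B\to\sO_Z$, i.e.\ a point of $\PP(\Hom(B,\sO_Z))=F^-$, and conversely $E$ is recovered as the fibre product $\sO_S\times_{\sO_Z}B$; on the $\delta^+$ side it is the class of the extension $0\to B\to E\to I_Z\to 0$, i.e.\ a point of $\PP(\Ext^1(I_Z,B))=F^+$. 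Relativising these constructions over $M(\gamma)\times\Hilb(X,P)$ --- with universal sheaves $\mathcal{B}$, $\sO_{\mathcal{Z}}$, the relative $\HHom(\mathcal{B},\sO_{\mathcal{Z}})$ and $\EEXT{1}{I_{\mathcal{Z}}}{\mathcal{B}}$, and the universal extension of~\eqref{univext} --- produces the morphisms $f^\pm_\gamma\colon W^\pm_\gamma\to M(\gamma)\times\Hilb(X,P)$ with the stated fibres.

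Finally, a point of $\sQ^{\delta^-}(v)$ outside $W^-_\gamma$ is, $\delta_\gamma$ being the only wall crossed, still a point of $\sQ^{\delta^+}(v)$ (its interval of admissible $\delta$ contains both $\delta^-$ and $\delta^+$), and symmetrically; the two assignments are mutually inverse and give the isomorphism $\sQ^{\delta^-}(v)\setminus W^-_\gamma\simeq\sQ^{\delta^+}(v)\setminus W^+_\gamma$, that is, the birational map $\phi_\gamma$. Since $F^-$ and $F^+$ have different generic dimension, $\phi_\gamma$ is not an isomorphism, which gives the ``only if'' direction of~(i).

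The main obstacle is the scheme-theoretic content of~(ii): turning the set-level bijections into isomorphisms of schemes requires the universal families on the $W^\pm_\gamma$ together with flatness and cohomology-and-base-change, and this is genuinely delicate on a surface since $\Hom(B,\sO_Z)$ and $\Ext^1(I_Z,B)$ jump in dimension along $\Hilb(X,P)$ (so the $f^\pm_\gamma$ are not projective bundles, only have the indicated fibres). One also has to check that $\delta^\pm$ can be chosen so that $\delta_\gamma$ is the only wall crossed, that $\Gamma$ accounts for every wall (so ``interval of admissible $\delta$'' is meaningful), and that the Harder--Narasimhan reduction in~(i) involves only finitely many classes --- all resting on the local finiteness of $\Gamma$. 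Everything else is a faithful transcription of the curve argument.
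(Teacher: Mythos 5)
Your proposal is correct and follows essentially the same route as the paper: the walls are located via the Bogomolov--Gieseker inequality applied to the rank-one kernel and its complement inside $\sO_S$, and the critical loci on either side are identified, via the same snake-lemma diagrams, with the data of $(B,Z)$ together with a map $B\to\sO_Z$ on the $\delta^-$ side and an extension class in $\Ext^1(I_Z,B)$ on the $\delta^+$ side. Your closing caveats (the $f^\pm_\gamma$ need not be projective bundles, and the description is pointwise rather than scheme-theoretic) are consistent with the paper, which makes the same disclaimers.
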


\begin{proof}
If a quotient $E\to \sO \to Q$ in $\sQ^{\delta}(v)$ is critical, there is an injection $B[1]\to Q$ where $B$ satisfies $\delta_{\ch{}(B)}=\delta$. In other words, there is a diagram
\begin{center}
\begin{tikzcd}[row sep=small]
 B \dar \ar{dr} & & &\\
H^{-1}Q \rar & E \rar & \sO_S \rar & H^0Q
\end{tikzcd}
\end{center}
By construction of $\Gamma'$, $\ch{}(B)$ takes values in $\Gamma'$, hence $\delta \in \Gamma$. This proves the first part.

The critical quotients in $\sQ^{\delta^+}(v)$ are those which fit in a diagram 
\begin{center}$(*)\quad$
\begin{tikzcd}[row sep=small]
& & B[1] \dar\\
E \dar \rar & \sO_S \ar[equal]{d} \rar & Q \dar \\
C \rar \dar  &\sO_S \rar & A \\
B[1] & &
\end{tikzcd}
\end{center}
where all rows and arrows are exact sequences in $\Coh^{\delta^+}(X)$, and $B$ is a Gieseker semistable sheaf with $\delta_{\ch{}(B)}=\delta_\gamma$. 

One sees that the only possibility for this to happen is for $B$ to coincide with $H^{-1}Q$: the sequence in cohomology of the rightmost column of $(*)$ reads
$$ B\to H^{-1}Q \to H^{-1}A \to 0 \to H^0Q \to H^0A.  $$
Since $\rk B=\rk H^{-1}Q=1$,  $H^{-1}A$ must be zero and $B\simeq H^{-1}Q$. This also implies that $A=H^0A\simeq H^0Q$ is the structure sheaf of a subscheme $Z$ of $X$ with Hilbert polynomial $P=P_Q - P_{B[1]}$. Then, $C$ is the ideal sheaf corresponding to $Z$. The diagram $(*)$ can then be reconstructed from $B$ and $Z$ with the datum of an extension $B\to E\to I_Z$. 

Similarly, critical quotients of $\sQ^{\delta^-}(v)$ fit in a diagram 
\begin{center}$(**)\quad$
\begin{tikzcd}[row sep=small]
& & B \dar\\
I_Z \dar \rar & \sO_S \ar[equal]{d} \rar & \sO_Z \dar \\
E' \rar \dar  &\sO_S \rar & Q' \\
B & &
\end{tikzcd}
\end{center}
where rows and columns are short exact sequences in $\Coh^{\delta^-}(X)$. In this case, it's enough to specify a map $B\to \sO_Z$ to reconstruct the diagram from $B$ and $Z$. 

In this way, we get the diagram \eqref{eq_HL_pairs_diagram} and see that every birational map is an isomorphism off of the walls $W_\gamma^\pm$.
\end{proof}

Now we sketch a Grothendieck--Riemann--Roch computation on certain determinant line bundles, in analogy with Section \ref{sec_determinant_bundles}. Fix $\delta_{v}\leq\delta<\delta_e$, and let $\left[ \sE \right]$ be the class of the universal kernel in $K^0(\sQ^\delta(v)\times X )$, and let $p,q$ denote the projections on the two factors. For $\alpha\in K(X)$, consider the line bundle
\[ \lambda_\sE(\alpha) = \det \left(p_!\left(\left[\sE\right]\cdot q^*\alpha\right)\right) \in \Pic(\sQ^\delta(v)). \]

Consider $\gamma\in\Gamma'$, $\delta_\gamma\in \Gamma$, and the corresponding loci $W^\pm_{\gamma}$ of critical quotients. Since the determinant construction commutes with base change, we can compute $\lambda_\sE(\alpha)_{|W^\pm_{\gamma}}\simeq \lambda_{\sE_{|W^\pm_{\gamma}}}(\alpha)$. By Prop. \ref{prop_descr_of_walls_surfaces}, the fibers of $W^\pm_\gamma$ over $M(v)\times \Hilb(X,P)$ are projective spaces. 

Our goal is to compute the restriction of certain determinant line bundles to those fibers. We illustrate the computation of the restriction to $F^-=\PP(\Hom(B,\sO_Z)$. 

By the Grothendieck-Riemann-Roch theorem, $\det \left(p_!\left(\left[\sE\right]\cdot q^*\alpha\right)\right)$ coincides with the codimension one part of the push forward of $\ch{}(\sE)\cdot q^*(\alpha \cdot \td(X))$ to $F^-$.
Since $X$ has dimension 2, one needs to consider codimension 3 cycles pushing forward to codimension 1 cycles. Let $H$ be the hyperplane class in $F^-$, and denote by $h$ its pull-back, the cycles we are interested in are of the form $h\cdot \psi$, where $\psi$ is the pull back of a zero dimensional cycle on $X$. The universal sequence on $F^-$ reads:
\[ 0\to B\boxtimes \sO_{F^-}(-1) \to q^*\sO_Z \to \sQ  \to 0 \]
where $\sQ$ is the universal quotient restricted to $\PP^-$. Therefore we have $$\ch{}(\sE)= q^*(\ch{}(\sO_X) - \ch{}(\sO_Z)) + q^*\gamma\cdot \exp(-h).$$

Then, the relevant coefficient of $h$ in $\ch{}(\sE)\cdot q^*(\alpha\td(X))$ is exactly the codimension two part of $-\gamma\cdot \alpha\td(X)$, which by Hirzebruch-Riemann-Roch coincides with $-\chi(\gamma,\alpha)$.

An identical computation starting with the the universal sequence in $W^+_\gamma$, which reads
\[ 0\to \sE \to q^*I_Z \to B[1]\boxtimes \sO_{\PP^+}(1)  \to 0, \]
shows that $\lambda_\sE(\alpha)$ restricts to $\chi(\gamma,\alpha)H$ in $F^+$.

Like in Section \ref{sec_Bradlow_pairs_on_curves}, we choose two cohomology classes $\alpha_{min}$ and $\alpha_e$ chosen to be orthogonal, respectively, to $(1,D,c)$ and $e$. Then we define the line bundles 
\begin{equation*}
\sM_{1}:= \lambda_{\sE}(-\alpha_{min})  \qquad 
\sM_{2}:= \lambda_{\sE}(-\alpha_{e})
\end{equation*}

The computation above shows
\begin{lemma}\label{lem_GRR_computation_surfaces}
For $\delta_\gamma\in\Gamma$, the line bundles $\sM_1$ and $\sM_2$ restrict to $\chi(v,\alpha_{min})H$ and $\chi(v,\alpha_e)H$ on the fibers $F^-_\gamma$, and to the opposites on $F^+_\gamma$.
\end{lemma}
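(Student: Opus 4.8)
The plan is to organize the Grothendieck--Riemann--Roch computation sketched in the preceding paragraphs into a proof. First I would reduce to a fibrewise statement: since the determinant construction $\lambda_\sE$ commutes with base change, the restriction of $\sM_i$ to $W^\pm_\gamma$ is $\lambda_{\sE_{|W^\pm_\gamma}}(-\alpha_\bullet)$ for the appropriate class $\alpha_\bullet\in\{\alpha_{min},\alpha_e\}$, and by Proposition~\ref{prop_descr_of_walls_surfaces} the scheme $W^\pm_\gamma$ is a projective bundle over $M(\gamma)\times\Hilb(X,P)$ with fibres $F^-=\PP(\Hom(B,\sO_Z))$ and $F^+=\PP(\Ext^1(I_Z,B))$. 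So it suffices to evaluate the degree of $\lambda_\sE(\alpha)_{|F^\pm_\gamma}$ against the hyperplane class $H$ of the fibre, for a general class $\alpha\in K(X)$, and then to specialize.

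Next I would run Grothendieck--Riemann--Roch on a fibre. On $F^-$, writing $h$ for the pull-back of $H$, the identity $[\sE]=[q^*\sO_X]-[\sQ]$ together with the universal quotient sequence $0\to B\boxtimes\sO_{F^-}(-1)\to q^*\sO_Z\to\sQ\to0$ gives
\[ \ch{}(\sE)=q^*\bigl(\ch{}(\sO_X)-\ch{}(\sO_Z)\bigr)+q^*\gamma\cdot\exp(-h), \]
with $\gamma=\ch{}(B)$ the class labelling the wall. By GRR, $c_1\bigl(p_!([\sE]\cdot q^*\alpha)\bigr)$ is the codimension-one component of $p_*\bigl(\ch{}(\sE)\cdot q^*(\alpha\cdot\td(X))\bigr)$; since $X$ is a surface, $p_*$ drops codimension by two, so the pieces of $\ch{}(\sE)$ pulled back from $X$ contribute nothing and only the part linear in $h$ survives, namely $-h\cdot q^*\bigl[\gamma\cdot\alpha\cdot\td(X)\bigr]_{2}$, where $[\,\cdot\,]_2$ denotes the codimension-two component. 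Since $\int_X\bigl[\gamma\cdot\alpha\cdot\td(X)\bigr]_2=\chi(\gamma,\alpha)$ by Hirzebruch--Riemann--Roch, the projection formula yields $\lambda_\sE(\alpha)_{|F^-}=-\chi(\gamma,\alpha)\,H$.

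Finally I would repeat the computation on $F^+$ using the universal sequence $0\to\sE\to q^*I_Z\to B[1]\boxtimes\sO_{F^+}(1)\to0$; this only turns $\exp(-h)$ into $-\exp(h)$, so the linear-in-$h$ term, hence the answer, changes sign: $\lambda_\sE(\alpha)_{|F^+}=\chi(\gamma,\alpha)\,H$. Specializing to $\alpha=-\alpha_{min}$ and $\alpha=-\alpha_e$, and recalling $\sM_1=\lambda_\sE(-\alpha_{min})$ and $\sM_2=\lambda_\sE(-\alpha_e)$, one gets that $\sM_1,\sM_2$ restrict to $\chi(\gamma,\alpha_{min})H$, $\chi(\gamma,\alpha_e)H$ on $F^-_\gamma$ and to their negatives on $F^+_\gamma$, which is the asserted table.

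I do not expect any serious obstacle: the computation is routine Chern-character arithmetic. The only step requiring care is the bookkeeping of conventions --- fixing the normalization of the universal sequences so that the relative twists come out as $\sO_{F^\pm}(\mp1)$ (this asymmetry is exactly what produces the sign flip between the two walls), and being consistent about the order of the arguments, and the dualization, when writing $\int_X[\gamma\cdot\alpha\cdot\td(X)]_2=\chi(\gamma,\alpha)$ rather than $\chi(\alpha,\gamma)$, compatibly with Section~\ref{sec_determinant_bundles}.
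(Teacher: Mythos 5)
Your proof follows the paper's argument essentially verbatim: the same reduction to the fibres via base change, the same universal sequences with the relative twists $\sO_{F^\pm}(\mp 1)$, and the same Grothendieck--Riemann--Roch bookkeeping isolating the $h$-linear term to obtain $\mp\chi(\gamma,\alpha)H$ before specializing to $\alpha=-\alpha_{min},-\alpha_e$. The only (harmless) discrepancy is that your final answer is phrased with $\chi(\gamma,-)$ while the lemma's statement writes $\chi(v,-)$ --- but the paper's own computation produces exactly $\chi(\gamma,-)$ as well, so you have reproduced its proof.
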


Notice that we do not claim that the critical loci $W_\gamma^\pm$ are projective bundles, like in the case of curves. This is why we carry out the computation above fiberwise, and not on the whole $W_\gamma^\pm$. An analog of Prop. \ref{prop_Bradlow_line_bundles} would require a more careful analysis of the critical loci, which we don't do in this note. 

\section{Further questions and applications}\label{sec_further_applications}

This section outlines two possible directions of investigation for generalized Quot schemes on surfaces. We provide some evidence motivating the importance of these examples.

\subsection{Analog of Bradlow pairs on surfaces}\label{sec_Th_pairs_on_surfaces}
In this section we work on a smooth projective surface $S$, and assume that $S$ contains no curve of negative self-intersection. We consider a class of quotients of the form
\begin{equation}
    \label{eq_quotient_O_to_L2}
    \sO_S \to L[2],
\end{equation}
where $L$ is a line bundle with sufficiently negative degree. Observe that these maps are a two-dimensional analog of Bradlow pairs on a curve $C$:
\[ E(-D) \to \sO_C \to \sO_C(-D)[1], \]
(see case (\ref{itm_map_Qd-1_to_determinants}) of Prop. \ref{prop_boundary_cases}).

We aim to interpret maps \eqref{eq_quotient_O_to_L2} as quotients in an abelian category obtained with a double tilt of $\Coh(S)$. Given an ample class $H$ on $S$ and parameters $\alpha>0$ and $\beta\in\R$ one can construct a stability condition $\sigma_{\alpha,\beta}=(\Coh^\beta(S),Z_{\alpha,\beta})$ as shown in \cite{AB13} (see also \cite{MS17}). Its slope function is given by 
\[  \nu_{\alpha,\beta}(E)= \frac{\ch 2^\beta(E)-\frac{\alpha^2}{2}\ch 0^\beta(E)H^2}{\ch 1^\beta(E)H}   \]
(where $\ch{}^\beta=e^{-\beta H}\ch{}$ is the twisted Chern character). 

\begin{definition}
Let 
\[
\sT^\theta_{\alpha,\beta}=\set{E\in \Coh^\beta(S) \st \mbox{ any quotient $E\twoheadrightarrow G$ satisfies $\nu_{\alpha,\beta}(G)>\theta$}}
\]
\[
\sF^\theta_{\alpha,\beta}=\set{E\in \Coh^\beta(S) \st \mbox{ any subobject $F\hookrightarrow E$ satisfies $\nu_{\alpha,\beta}(F)\leq\theta$}}
\]

and define $\sA^\theta_{\alpha,\beta}=\pair{\sF^\theta_{\alpha,\beta}[1],\sT^\theta_{\alpha,\beta}}$ to be the corresponding tilt of $\Coh^\beta(S)$. We use $\tau^\theta_{\alpha,\beta}$ to denote the corresponding $t$-structure. 
\end{definition}

Let $v=(1,\ch 1(L),\frac{L^2}{2})\in H^*(S)$ denote the Chern character of a line bundle $L$ on $S$ of degree $-d\leq 0$. For a map \eqref{eq_quotient_O_to_L2} to be in $\sA^\theta_{\alpha,\beta}$, a necessary condition is that $\sO_S\in \sT^\beta \cap \sT^\theta_{\alpha,\beta}$, and that the class $v$ satisfies $\mu(v)\leq \beta H^2$ and $\nu_{\alpha,\beta}(v)\leq \theta$. This can only be arranged if  $0>\beta H^2 >-d$ and 
\[ \nu_{\alpha,\beta}(-v)=\frac{d\beta + \frac{L^2}{2} +  H^2(\frac{\beta^2}{2}-\frac{\alpha^2}{2})}{-d-\beta H^2} < \frac{ H^2(\frac{\beta^2}{2}-\frac{\alpha^2}{2})}{-\beta H^2}=\nu_{\alpha,\beta}(\sO_S).  \]
This rewrites as 
\begin{align*}
   \frac{d\beta + \frac{L^2}{2} +  (-\beta H^2)\nu_{\alpha,\beta}(\sO_S)}{-d-\beta H^2}  < \nu_{\alpha,\beta}(\sO_S)\\
   d\left(\beta + \frac{L^2}{2d} + \nu_{\alpha,\beta}(\sO_S)\right)>0.
\end{align*}

We summarize the above in the following:
\begin{setup}\label{setup}
Let $S$ be a smooth projective surface containing no negative curves.
Fix a Chern character $v=\ch{}(L)$ where $L$ has degree $-d<0$, with $d$ large enough to have $H^0(S,L)=0$ (and therefore $\abs{K_SL^{-1}}\neq \emptyset$). Choose parameters $\beta$ and $\alpha$ such that:
\begin{enumerate}[(i)]
    \item $-d < \beta \ll 0$;
    \item $\alpha > 0$ such that $d\beta + \frac{L^2}{2d} + d\nu_{\alpha,\beta}(\sO_S))>0$;
\end{enumerate}
\end{setup}

For the rest of the section we assume \ref{setup} and consider the Quot schemes 
\[\sQ^\theta(v)\coloneqq \Quot_{S/\text{pt}}^{\tau^\theta_{\alpha,\beta}}(\sO,v)(\text{pt}).\] Denote $\theta_0\coloneqq \nu_{\alpha,\beta}(v)$. Note that the image of $\nu_{\alpha,\beta}$ is discrete, and the few next bigger values after $\theta_0$ are $\theta_k\coloneqq \nu_{\alpha,\beta}(v+(0,0,k))=\nu_{\alpha,\beta}(-v) + \frac{k}{d+\beta H^2}$ (as a consequence of the choices of $\alpha$ and $\beta$: every increment in $\ch0$ or $\ch 1$ would change $\nu_{\alpha,\beta}$ significantly). Now we describe $\sQ^\theta(v)$ and a wall-crossing as $\theta$ varies. 

First, we record a Lemma for future use:

\begin{lemma}[{\cite[Cor. 3.3]{Noo09}}]\label{lem_Noohi_surjectivity}
Suppose $X\in\sT^\theta_{\alpha,\beta}$ and $Y\in\sF^\theta_{\alpha,\beta}$, so that $Y[1]\in\sA_{\alpha,\beta}^\theta$. A map $\epsilon\colon X\to Y[1]$ corresponds to an extension $Y\to K \to X$ in $\Coh^\beta(S)$. Then $\epsilon$ is surjective in $\sA_{\alpha,\beta}^\theta$ if and only if $K \in \sT^\theta_{\alpha,\beta}$. 
\end{lemma}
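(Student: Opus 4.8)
The plan is to unwind the definition of the tilted $t$-structure $\tau^\theta_{\alpha,\beta}$ and translate the surjectivity of $\epsilon\colon X\to Y[1]$ in $\sA^\theta_{\alpha,\beta}$ into a condition on the cone. First I would record the basic correspondence: a morphism $\epsilon\colon X\to Y[1]$ in $\Coh^\beta(S)$ (viewing $X\in\sT^\theta_{\alpha,\beta}$ and $Y\in\sF^\theta_{\alpha,\beta}\subset\Coh^\beta(S)$) is the same datum as a class in $\Ext^1_{\Coh^\beta(S)}(X,Y)$, hence the same as an extension $Y\to K\to X$ in $\Coh^\beta(S)$; rotating the defining triangle, $K$ is (a shift of) the cone of $\epsilon$, so we have a triangle $Y[1]\to \mathrm{Cone}(\epsilon)\to X[1]\xrightarrow{+1}$, and $\mathrm{Cone}(\epsilon)\simeq K[1]$.

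Next I would compute the cokernel of $\epsilon$ inside the abelian category $\sA^\theta_{\alpha,\beta}$. Recall that $\sA^\theta_{\alpha,\beta}=\pair{\sF^\theta_{\alpha,\beta}[1],\sT^\theta_{\alpha,\beta}}$ is the tilt of $\Coh^\beta(S)$ at the torsion pair $(\sT^\theta_{\alpha,\beta},\sF^\theta_{\alpha,\beta})$, so an object $E$ of $D^b$ lies in this heart iff its $\Coh^\beta$-cohomology is concentrated in degrees $0,-1$ with $\sH^0_{\Coh^\beta}(E)\in\sT^\theta_{\alpha,\beta}$ and $\sH^{-1}_{\Coh^\beta}(E)\in\sF^\theta_{\alpha,\beta}$. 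Both $X$ and $Y[1]$ lie in the heart (by hypothesis $X\in\sT^\theta_{\alpha,\beta}$, and $Y\in\sF^\theta_{\alpha,\beta}$ gives $Y[1]\in\sA^\theta_{\alpha,\beta}$). The cokernel of $\epsilon$ in $\sA^\theta_{\alpha,\beta}$ is $\mathcal H^0_{\sA}(\mathrm{Cone}(\epsilon))$; since $\mathrm{Cone}(\epsilon)\simeq K[1]$, I would take the long exact sequence of $\Coh^\beta$-cohomology associated to $Y\to K\to X$, namely $0\to\sH^{-1}_{\Coh^\beta}(K)\to 0 \to 0 \to Y \to K \to X \to 0$ — i.e. $K$ sits in degree $0$ as an honest object of $\Coh^\beta(S)$ fitting in the short exact sequence $0\to Y\to K\to X\to 0$ in $\Coh^\beta(S)$ — and then compute the $\sA^\theta_{\alpha,\beta}$-cohomology of $K[1]$. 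Applying the torsion-pair decomposition $0\to \tor{K}\to K\to \tf{K}\to 0$ of $K$ with respect to $(\sT^\theta_{\alpha,\beta},\sF^\theta_{\alpha,\beta})$, one finds $\mathcal H^{-1}_{\sA}(K[1])=\tf{K}[1]$ and $\mathcal H^{0}_{\sA}(K[1])=\tor{K}$. From the triangle $Y[1]\to K[1]\to X[1]\xrightarrow{+1}$ and the fact that the first and third terms are in the heart $\sA^\theta_{\alpha,\beta}$, the long exact sequence of $\sA$-cohomology reads $0\to \tf{K}[1]\to Y[1]\xrightarrow{\epsilon'} X[1]\to \tor{K}\to 0$ — but after shifting back this is exactly $0\to\ker(\epsilon)\to Y[1]\xrightarrow{\epsilon}X\to\coker(\epsilon)\to 0$ in $\sA^\theta_{\alpha,\beta}$ with $\coker(\epsilon)=\tor{K}$ (the torsion part of $K$). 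Hence $\epsilon$ is surjective in $\sA^\theta_{\alpha,\beta}$ iff $\tor{K}=0$ iff $K\in\sF^\theta_{\alpha,\beta}$ — wait, that is the wrong pair; I must double-check the degree bookkeeping, because the stated criterion is $K\in\sT^\theta_{\alpha,\beta}$.

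The point requiring care — and the main obstacle — is precisely getting the homological degree conventions right, so that the cokernel of $\epsilon$ is identified with the correct piece of the torsion-pair filtration of $K$ (living in the appropriate cohomological degree), yielding the condition $K\in\sT^\theta_{\alpha,\beta}$ rather than $K\in\sF^\theta_{\alpha,\beta}$. Concretely, one should note that $K[1]$, not $K$, is the cone, so the heart-$\sA$-cohomologies of $\mathrm{Cone}(\epsilon)$ involve the shift: $\mathcal H^0_{\sA}(\mathrm{Cone}(\epsilon))=\mathcal H^0_{\sA}(K[1])$, and under the tilt the torsion part $\tor{K}\subset K$ (i.e. the part in $\sT^\theta_{\alpha,\beta}$) contributes to $\mathcal H^0_{\sA}$ of $K$ itself while $\tf{K}[1]$ contributes to $\mathcal H^0_{\sA}(K[1])$; tracking this shift shows the surjectivity of $\epsilon$ is equivalent to the vanishing of $\tf{K}$, i.e. $K\in\sT^\theta_{\alpha,\beta}$, as claimed. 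Since this is exactly \cite[Cor. 3.3]{Noo09} specialized to our tilted heart, I would simply cite that result and remark that the above is the translation of Noohi's criterion — an epimorphism in a tilt $\pair{\sF[1],\sT}$ from $Y[1]$ to $X$ corresponds to an extension $Y\to K\to X$ whose middle term is a torsion object — to the present notation.
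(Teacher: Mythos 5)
Your overall strategy is the right one; in fact the paper offers no proof of this lemma at all (it is quoted verbatim from Noohi's Cor.~3.3), so unwinding the long exact sequence of heart-cohomology of the cone is exactly the argument one would supply. You correctly identify $\mathrm{Cone}(\epsilon)\simeq K[1]$ and correctly reduce everything to computing $\mathcal{H}^{\bullet}_{\sA}(K[1])$. But the computation you actually write down is wrong in two places, and as written it derives the negation of the statement before gesturing at a fix.

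First, the degree bookkeeping: for $K\in\Coh^\beta(S)$ with torsion decomposition $0\to\tor{K}\to K\to\tf{K}\to 0$ relative to $(\sT^\theta_{\alpha,\beta},\sF^\theta_{\alpha,\beta})$, one has $\mathcal{H}^0_{\sA}(K)=\tor{K}$ and $\mathcal{H}^1_{\sA}(K)=\tf{K}[1]$: the torsion-free part sits in $\sA$-degree $+1$ precisely because it is $\tf{K}[1]$, not $\tf{K}$, that lies in the heart. Hence $\mathcal{H}^{-1}_{\sA}(K[1])=\tor{K}$ and $\mathcal{H}^{0}_{\sA}(K[1])=\tf{K}[1]$ --- the opposite of what you wrote. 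Second, the direction of $\epsilon$: the map goes from $X$ to $Y[1]$ (indeed $\Hom(Y[1],X)=\Ext^{-1}_{\Coh^\beta(S)}(Y,X)=0$, so there are no nonzero maps the other way), so the relevant triangle is $X\xrightarrow{\epsilon}Y[1]\to K[1]\to X[1]$ and the long exact sequence of $\sA$-cohomology reads
\[ 0\to \mathcal{H}^{-1}_{\sA}(K[1]) \to X \xrightarrow{\ \epsilon\ } Y[1] \to \mathcal{H}^{0}_{\sA}(K[1]) \to 0, \]
giving $\ker(\epsilon)=\tor{K}$ and $\coker(\epsilon)=\tf{K}[1]$. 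Surjectivity is therefore equivalent to $\tf{K}=0$, i.e.\ $K\in\sT^\theta_{\alpha,\beta}$, as claimed. Your version concludes $\coker(\epsilon)=\tor{K}$ and hence the wrong criterion $K\in\sF^\theta_{\alpha,\beta}$; you notice the discrepancy, but the sentence that is supposed to repair it ("tracking this shift shows\dots") is an assertion rather than a computation, and your closing summary still has the epimorphism going from $Y[1]$ to $X$. The fix above is short, but it is the entire content of the lemma, so it needs to be carried out rather than asserted.
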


\begin{proposition}
Under the assumptions \ref{setup}, pick $\theta_0 \leq\theta<\theta_1$. Then $\sQ^\theta(v) \simeq \abs{K_S L^{-1}}$.
\end{proposition}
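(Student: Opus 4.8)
The plan is to show that, in this chamber, the only quotient object that can occur is $L[2]$, and then to identify the space of surjections $\sO_S\twoheadrightarrow L[2]$ with a projective space. Write $v=\ch{}(L)$, so $\ch{}(L[2])=v$ and any short exact sequence $0\to\sK\to\sO_S\to Q\to 0$ in $\sA^\theta_{\alpha,\beta}$ with $[Q]=v$ has $[\sK]=(1,0,0)-v$, a rank $0$ class. \textbf{Step 1.} I would first prove that every such $Q$ is isomorphic to $L[2]$ (in particular $L[2]\in\sA^\theta_{\alpha,\beta}$). Passing from the given sequence to the long exact sequence of $\Coh^\beta(S)$-cohomology objects (with respect to the torsion pair $(\sT^\theta_{\alpha,\beta},\sF^\theta_{\alpha,\beta})$) and then to standard cohomology sheaves, and using that $\sO_S\in\sT^\beta\cap\sT^\theta_{\alpha,\beta}$ lies in a single degree (a consequence of Setup \ref{setup}), one constrains $Q$ to be concentrated in $\Coh^\beta(S)$-degrees $-2,-1$; combining $\ch{0}(Q)=1$, the Bogomolov--Gieseker inequality for the rank-one pieces, the vanishing $H^0(S,L)=0$, and --- crucially --- the fact, built into Setup \ref{setup}, that $[\theta_0,\theta_1)$ contains no critical value of $\nu_{\alpha,\beta}$ (any increment of $\ch{0}$ or $\ch{1}$ moves $\nu_{\alpha,\beta}$ by a definite amount), every candidate except $Q\cong L[2]$ is eliminated. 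The membership $L[2]\in\sA^\theta_{\alpha,\beta}$, equivalently $L[1]\in\sF^\theta_{\alpha,\beta}$, I would check directly: $L[1]\in\Coh^\beta(S)$ since $\mu(L)<\beta$ by Setup \ref{setup}, and $L[1]$ is $\nu_{\alpha,\beta}$-semistable of slope $\theta_0\le\theta$ (here "$S$ contains no negative curves" guarantees the semistability).

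\textbf{Step 2.} Given $L[1]\in\sF^\theta_{\alpha,\beta}$ and $\sO_S\in\sT^\theta_{\alpha,\beta}$, Lemma \ref{lem_Noohi_surjectivity} applies with $X=\sO_S$, $Y=L[1]$: a morphism $\epsilon\in\Hom(\sO_S,L[2])=\Ext^2_S(\sO_S,L)=H^2(S,L)$ corresponds to an extension $0\to L[1]\to K_\epsilon\to\sO_S\to 0$ in $\Coh^\beta(S)$, and $\epsilon$ is an epimorphism in $\sA^\theta_{\alpha,\beta}$ if and only if $K_\epsilon\in\sT^\theta_{\alpha,\beta}$. I would then show every nonzero $\epsilon$ has this property: a $\nu_{\alpha,\beta}$-destabilizing quotient $K_\epsilon\twoheadrightarrow G$ of slope $\le\theta$ would, by semistability of $\sO_S$ and of $L[1]$ together with the absence of walls in $[\theta_0,\theta_1)$, force $G$ (and its complement) to have numerical classes making the sequence split off a copy of $\sO_S$ or of $L[1]$ --- contradicting $\epsilon\neq 0$. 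Consequently the surjections $\sO_S\twoheadrightarrow L[2]$ modulo $\Aut(L[2])=\C^\ast$ are exactly $\PP(\Hom(\sO_S,L[2]))=\PP(H^2(S,L))$; this is nonempty because $\abs{K_SL^{-1}}\neq\emptyset$ gives $h^2(S,L)=h^0(K_SL^{-1})>0$.

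\textbf{Step 3.} To obtain a scheme isomorphism I would upgrade Steps 1--2 to families: a $T$-point of $\sQ^\theta(v)$ is a quotient $\sO_{S\times T}\to Q$ with each $Q_t\cong L[2]$, and the rigidity of $L[2]$ ($\Hom(L[2],L[2])=\C$, $\Ext^{<0}=0$) makes $Q$ of the form $p_S^\ast L[2]\otimes p_T^\ast N$ for a line bundle $N$ on $T$; by Künneth (using $H^0(S,L)=0$ and, for the cross term, $H^1(S,L)=0$, which also holds once $d$ is large) the morphism datum is a nowhere-vanishing section of $H^2(S,L)\otimes N$, i.e. a line subbundle of $H^2(S,L)\otimes\sO_T$, i.e. a $T$-point of $\PP(H^2(S,L))$. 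Hence $\sQ^\theta(v)\simeq\PP(H^2(S,L))$, and Serre duality $H^2(S,L)\cong H^0(S,K_S\otimes L^{-1})^\vee$ identifies this with the (dual) linear system $\abs{K_SL^{-1}}$, giving the claim. When $h^1(S,\sO_S)>0$ this should be read, as in the curve case of Proposition \ref{prop_boundary_cases}, with the determinant of $L$ fixed; otherwise $\sQ^\theta(v)$ would also fiber over the locus of $M\in\Pic(S)$ numerically equivalent to $L$ with $H^0(K_SM^{-1})\neq0$.

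\textbf{Main obstacle.} The substantive part is Step 1: ruling out all the exotic quotient objects $Q$ using only numerical data. This is the surface analogue of Proposition \ref{prop_boundary_cases}(iii), but genuinely harder, since objects of a double tilt can have cohomology in three consecutive degrees; the argument hinges on combining the Bogomolov--Gieseker inequality with the fact that $[\theta_0,\theta_1)$ is a single chamber --- precisely the role of the numerical genericity of $(\alpha,\beta,d)$ in Setup \ref{setup}. The surjectivity check in Step 2 (that $K_\epsilon\in\sT^\theta_{\alpha,\beta}$ for all $\epsilon\neq0$) is a bounded case analysis of the same flavour and I expect it to be routine once Step 1 is in place.
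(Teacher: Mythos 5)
Your overall architecture is the same as the paper's: identify the only possible quotient object as $L[2]$, use Lemma \ref{lem_Noohi_surjectivity} together with non-splitness of the extension to show that every nonzero map $\sO_S\to L[2]$ is an epimorphism in $\sA^\theta_{\alpha,\beta}$, and conclude $\sQ^\theta(v)=\PP(\Hom(\sO_S,L[2]))\simeq\abs{K_SL^{-1}}$. Your Step 2 is essentially the paper's surjectivity paragraph, and Step 3 is a standard families upgrade that the paper leaves implicit.

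The genuine gap is in Step 1, precisely where you flag the ``main obstacle.'' The paper splits that step into two claims of very different nature: (a) the choice $\theta_0\le\theta<\theta_1$ forces the quotient to be $M[1]$ with $M\in\Coh^\beta(S)$ a $\sigma_{\alpha,\beta}$-\emph{semistable} object of class $-v$ (this is the analogue of Proposition \ref{prop_boundary_cases} and does follow from the discreteness of the image of $\nu_{\alpha,\beta}$); and (b) every $\sigma_{\alpha,\beta}$-semistable object of class $-v$ is the shift of a line bundle. For (b) the paper does not argue numerically at all: it invokes \cite[Thm. 1.1]{AM16}, which says that on a surface with no curves of negative self-intersection there are no actual walls in the $(\alpha,\beta)$-plane for the Chern character of a line bundle, so the semistable objects of that class agree with those in the large volume limit. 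Your proposed substitute --- Bogomolov--Gieseker for the rank-one pieces plus the fact that $[\theta_0,\theta_1)$ contains no critical value of $\nu_{\alpha,\beta}$ --- does not deliver (b): the absence of critical slope values at the \emph{fixed} stability condition $\sigma_{\alpha,\beta}$ only yields semistability of the quotient, whereas classifying which objects of class $-v$ are semistable at the chosen $(\alpha,\beta)$ (which, per Setup \ref{setup}, is far from the large volume limit, with $\beta\ll 0$) is a wall-crossing statement as $(\alpha,\beta)$ varies. That is exactly where the hypothesis that $S$ contains no negative curves enters, and without it your elimination of exotic quotients --- two-term complexes of class $-v$ with higher-rank cohomology sheaves, which Bogomolov--Gieseker alone does not exclude --- is not justified. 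The same issue affects your parenthetical claim that ``no negative curves'' guarantees semistability of $L[1]$: true, but again by the cited theorem, not by a chamber count at fixed $(\alpha,\beta)$. Your remark at the end of Step 3 about fixing the determinant when $\Pic^0(S)\neq 0$ is a fair observation that the paper glosses over, but it is orthogonal to the main argument.
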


\begin{proof}Consider a quotient $\set{\sO_S \to N}\in \sQ^\theta(v)$. Our choice of $\theta$ implies that $N=M[1]$ where $M\in\Coh^\beta(S)$ in a $\sigma_{\alpha,\beta}$-semistable object of class $-v$ (argue as in the proof of Prop. \ref{prop_boundary_cases}).
Since $S$ has no curve of negative self intersection, \cite[Thm. 1.1]{AM16} implies that $\sigma_{\alpha,\beta}$-semistable objects of class $-v$ are exactly shifts of line bundles of class $v$ (such objects are the only $\sigma_{\alpha,\beta}$-semistable objects in the Large Volume Limit, and \cite{AM16} shows that there are no walls for their class).
In other words, we must have $M\simeq L[1]$ for some line bundle $L$ of class $v$.

On the other hand, every non-zero map $q\colon \sO_S\to L[2]$ is surjective in $\sA^\theta_{\alpha,\beta}$: the cone $C$ fits in a non-split exact sequence in $\Coh^\beta(S)$
\[ L[1]\to C \to \sO_S \]
so $\nu_{\alpha,\beta}(C)>\theta$. If $C\notin \sT^\theta_{\alpha,\beta}$ then it would split the sequence by our choice of $\theta$, so we can conclude by Lemma \ref{lem_Noohi_surjectivity}). 
As a consequence, the Quot scheme coincides with the whole projective space  $ \PP(\Hom(\sO,L[2]))\simeq \abs{K_S L^{-1}}$.
\end{proof}

Much like in the curve case, we aim to provide a geometric interpretation of wall-crossing phenomena for $\sQ^\theta(v)$. We recall Terracini's lemma and the geometric construction which will be needed later. Keep the notation as above. Then the linear series $\abs{K_SL^{-1}}\simeq \pr N$ contains a copy of $S$ embedded with the map sending $p\in S$ to the composition $\sO_S \to \C_p \xrightarrow{a} L[2]$ (here the first map is restriction of $\sO_S$, and $a\in \Hom(\C_p,L[2])\simeq \Hom(L,\C_p\otimes \omega_S)^*$ is the dual of the restriction of $L$ to its fiber over $p$). In other words, $p\in S\subset \abs{K_SL^{-1}}$ coincides with the one dimensional quotient $H^0(K_SL^{-1})/ H^0(K_SL^{-1}I_p)$.

Thus, we can identify the tangent space $T_p\pr N$ with $H^0(K_SL^{-1}I_p)$. Similarly, if $p\neq q$ are two points of $S$, the secant line through $p$ and $q$ in $\pr N$ is identified by the two dimensional quotient $H^0(K_SL^{-1})/ H^0(K_SL^{-1}I_p\otimes I_q)$. Finally, we recall:

\begin{lemma}[Terracini]
The normal space $N_pS$ is identified with $H^0(K_SL^{-1}I_p^2)$.  
\end{lemma}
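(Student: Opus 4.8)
Write $\sL\coloneqq K_SL^{-1}$. Since $d$ is large we may assume (possibly after enlarging $d$ in Setup~\ref{setup}, in the same spirit as the hypotheses already made) that $\sL$ is very ample and separates $2$-jets at every point, i.e.\ that $H^0(\sL)\to H^0(\sL\otimes\sO_S/I_p^2)$ is onto for all $p\in S$; this is standard positivity on a surface, and is in particular what guarantees that $p\mapsto(\sO_S\to\C_p\xrightarrow{a}L[2])$ is a closed embedding $\phi\colon S\hookrightarrow\abs{K_SL^{-1}}=\pr N$. The plan is to read off $N_pS$ from the first-order behaviour of $\phi$, encoded in the exact sequence of sheaves $0\to I_p^2\to I_p\to I_p/I_p^2\to 0$ tensored by $\sL$. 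Taking global sections, separation of $2$-jets at $p$ makes the resulting sequence on sections truncate to a short exact sequence
\begin{equation}\label{eq_terracini_jet_sequence}
0\longrightarrow H^0(\sL\otimes I_p^2)\longrightarrow H^0(\sL\otimes I_p)\xrightarrow{d_p}\sL_p\otimes\Omega^1_{S,p}\longrightarrow 0,
\end{equation}
where $\sL_p$ and $\Omega^1_{S,p}$ denote fibres at $p$ and $d_p$ sends a section vanishing at $p$ to its differential there.

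Next I would identify the differential $d\phi_p\colon T_pS\to T_p\pr N$ with the transpose of $d_p$. With the conventions under which $T_p\pr N$ was identified above with $H^0(K_SL^{-1}I_p)$ — that is, $T_p\pr N\cong\Hom(H^0(\sL\otimes I_p),\sL_p)$ up to the twist by $\sL_p$ — the map $d\phi_p$ sends $v\in T_pS$ to the functional $s\mapsto\langle v,d_ps\rangle$, which factors as the canonical isomorphism $T_pS\cong\Hom(\sL_p\otimes\Omega^1_{S,p},\sL_p)$ followed by precomposition with $d_p$. Since $\phi$ is a closed embedding, $d\phi_p$ is injective; since $d_p$ is surjective by \eqref{eq_terracini_jet_sequence}, its image is exactly the space of functionals on $H^0(\sL\otimes I_p)$ that vanish on $\ker d_p=H^0(\sL\otimes I_p^2)$. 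Passing to the quotient,
\[
N_pS\;=\;T_p\pr N/T_pS\;\cong\;\Hom\bigl(H^0(\sL\otimes I_p^2),\sL_p\bigr),
\]
which, read through the very same identification used for $T_p\pr N$ (again up to the twist by $\sL_p$), is $H^0(K_SL^{-1}I_p^2)$, as claimed. One can double-check this projectively: a hyperplane $\{s=0\}\subset\pr N$ contains the embedded tangent plane to $S$ at $p$ if and only if the section $s\in H^0(K_SL^{-1})$ vanishes to order at least $2$ at $p$, i.e.\ lies in $H^0(K_SL^{-1}I_p^2)$, and $N_pS$ is dual to the space of such hyperplanes.

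The only load-bearing analytic input is the surjectivity of $d_p$ in \eqref{eq_terracini_jet_sequence}, i.e.\ that $K_SL^{-1}$ separates $2$-jets on $S$; this follows from Serre vanishing once $d$ is large (on a surface without negative curves $K_SL^{-1}$ becomes very positive, and Reider-type bounds make the $2$-jet statement explicit), so at worst one enlarges $d$ in Setup~\ref{setup}. Everything else is the bookkeeping needed to keep the twist by $\sL_p$ and the choice of "sub versus quotient" uniform across the identifications of $T_p\pr N$, of the secant lines, and of $N_pS$; once \eqref{eq_terracini_jet_sequence} is in place the lemma is a formal consequence, so I expect no genuine difficulty.
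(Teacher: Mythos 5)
Your argument is correct and is essentially the identification the paper itself relies on: the lemma is cited as classical, and the justification recorded in Remark~\ref{rmk_terracini_functors} is exactly your computation, namely dualizing the global sections of $0\to K_SL^{-1}I_p^2\to K_SL^{-1}I_p\to K_SL^{-1}\otimes I_p/I_p^2\to 0$ and matching the result with $0\to T_pS\to T_p\pr N\to N_pS\to 0$, which is your $2$-jet sequence read through the same (dual, $\sL_p$-twisted) identification of $T_p\pr N$ with $H^0(K_SL^{-1}I_p)$. The one load-bearing input is identical in both treatments --- surjectivity onto $2$-jets, i.e.\ the vanishing $H^1(K_SL^{-1}I_p^2)=0$ that the remark asserts silently --- and you are right to flag that this needs $d$ in Setup~\ref{setup} large enough for $K_SL^{-1}$ to separate $2$-jets, not merely for $H^0(S,L)=0$.
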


The first wall crossing corresponds to the value $\theta_1=  \nu_{\alpha,\beta}(-v) + \frac{1}{d+\beta H^2}$. 

\begin{proposition}\label{prop_wall_terracini}
For $\theta_1\leq \theta < \theta_2$, then $\sQ^\theta(v)$ has one irreducible component $\sQ'$ which coincides with the blowup of $\abs{K_S L^{-1}}$ along $S$, embedded as above. This component intersects the others in the exceptional locus of the blowup.
\end{proposition}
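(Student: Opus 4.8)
The plan is to run the same cohomology/stability analysis as in Proposition \ref{prop_boundary_cases} and in the plane-conics example of Section \ref{sec_first_examples}, now combined with Lemma \ref{lem_Noohi_surjectivity} and the no-wall statement \cite[Thm.~1.1]{AM16}. For $\theta\in[\theta_1,\theta_2)$ I would first describe all objects $N\in\sA^\theta_{\alpha,\beta}$ of class $v$ admitting a surjection $\sO_S\twoheadrightarrow N$. Decomposing $N$ along the torsion pair $(\sT^\theta_{\alpha,\beta},\sF^\theta_{\alpha,\beta})$ and using that — by the choices in Setup \ref{setup} — the only value $\nu_{\alpha,\beta}$ can take strictly between $\theta_0$ and $\nu_{\alpha,\beta}(\sO_S)$ on a subobject whose class differs from $v$ only by one unit of $\ch 2$ is $\theta_1$, one should be left with exactly two types: either $N=L[2]$ for a line bundle $L$ of class $v$ (the $\sigma_{\alpha,\beta}$-semistable locus of class $-v$ being pinned down as in the previous Proposition via \cite{AM16}), or $N$ is a ``new'' object sitting in a short exact sequence in $\sA^\theta_{\alpha,\beta}$ glued out of $L[2]$ and a length-one torsion sheaf $\C_p$, which forces $N$ to be supported at the point $p$. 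By Lemma \ref{lem_Noohi_surjectivity}, a nonzero $q\colon\sO_S\to L[2]$ is surjective in $\sA^\theta_{\alpha,\beta}$ if and only if the $\Coh^\beta(S)$-extension $K_q$ of $\sO_S$ by $L[1]$ it determines lies in $\sT^\theta_{\alpha,\beta}$; I expect this to fail precisely when $q$ factors as $\sO_S\twoheadrightarrow\C_p\xrightarrow{a}L[2]$, that is, precisely on the copy of $S\subset\abs{K_SL^{-1}}$ described before the statement. Hence the locus $U\subset\sQ^\theta(v)$ of quotients of the first type is open and is identified with $\abs{K_SL^{-1}}\setminus S$; set $\sQ'\coloneqq\overline U$, which is then an irreducible component of $\sQ^\theta(v)$ because $\abs{K_SL^{-1}}\setminus S$ is irreducible.

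\textbf{Step 2 ($\sQ'\cong\Bl_S\abs{K_SL^{-1}}$).} I would construct mutually inverse morphisms between $\sQ'$ and $\Bl_S\abs{K_SL^{-1}}$. In one direction, pulling the universal map $\sO\to\sL[2]$ back from $\abs{K_SL^{-1}}\times S$ to $\Bl_S\abs{K_SL^{-1}}\times S$ and performing an elementary modification along the exceptional divisor $E$ — using the tautological subobject (the universal normal direction) carried by $E$ — should produce a flat family of quotients in $\sA^\theta_{\alpha,\beta}$ of class $v$, hence a morphism $\Phi\colon\Bl_S\abs{K_SL^{-1}}\to\sQ^\theta(v)$ which is an isomorphism over $U\cong\abs{K_SL^{-1}}\setminus S$ and therefore lands in $\sQ'$; properness of $\sQ^\theta(v)$ (Proposition \ref{prop_properness}) then lets me treat $\Phi$ as a proper morphism onto $\sQ'$. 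In the other direction, a quotient in $\sQ'$ is either of type $\sO_S\to L[2]$, sent to $q\in\abs{K_SL^{-1}}\setminus S\subset\Bl_S\abs{K_SL^{-1}}$, or a new quotient supported at a point $p\in S$ which (by the description in Step~1) carries canonical extension data; using Terracini's Lemma $N_pS\cong H^0(K_SL^{-1}I_p^2)$ together with the count $h^0(K_SL^{-1}I_p^2)=\dim\abs{K_SL^{-1}}-2$, I expect to identify this datum with a point of the fibre over $p$ of the exceptional divisor $E=\PP(N_{S/\abs{K_SL^{-1}}})$, giving a morphism $\sQ'\to\Bl_S\abs{K_SL^{-1}}$ inverse to $\Phi$. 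The real content of the step is thus this fibrewise identification of the new quotients supported at $p$ with $\PP(N_pS)\cong\mathbb P^{\dim\abs{K_SL^{-1}}-3}$, carried out in families so as to make it scheme-theoretic.

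\textbf{Step 3 (other components and main obstacle).} Any point of $\sQ^\theta(v)\setminus\sQ'$ is, by Step~1, a new quotient $N$ supported at some $p\in S$ which is not a flat limit of quotients of type $\sO_S\to L[2]$; sending such an $N$ to the point it is supported at is a morphism, so every other irreducible component of $\sQ^\theta(v)$ maps onto a subvariety of $S$ and can meet $\sQ'$ only inside $f^{-1}(S)=E$, the exceptional divisor of the blow-up, which gives the last assertion. I expect the main obstacle to be Step~1: the exhaustive classification of the objects $N\in\sA^\theta_{\alpha,\beta}$ of class $v$ admitting a surjection from $\sO_S$ is delicate because one must control Harder--Narasimhan data in a \emph{double} tilt, and the identification of the $\sigma_{\alpha,\beta}$-semistable objects of class $-v$ rests entirely on \cite{AM16}; a secondary difficulty is making the identification of the fibres over $S$ with $\PP(N_pS)$ scheme-theoretic, i.e.\ showing $f^{-1}I_S\cdot\sO_{\sQ'}$ is invertible so that $\sQ'$ is genuinely the blow-up and not merely birational to it.
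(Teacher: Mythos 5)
Your overall strategy coincides with the paper's: identify the critical locus at $\theta_1$ with the copy of $S\subset\abs{K_SL^{-1}}$ of maps factoring through a skyscraper sheaf, parametrize the quotients that replace them over $p\in S$ by the projectivized normal space via Terracini's lemma, and conclude that $\sQ'=\Bl_S\abs{K_SL^{-1}}$. The gap is in your Step 1, and it sits exactly where the content of the paper's proof lies. The new objects appearing for $\theta\geq\theta_1$ are \emph{not} extensions of $L[2]$ by $\C_p$ (such an object would have class $v+(0,0,1)$, not $v$), and they are certainly not supported at a point. The paper shows they are the direct sums $\C_p\oplus\mathsf D(L^{-1}I_q)$, where $\mathsf D(L^{-1}I_q)=\dR\HHom(L^{-1}I_q,\sO_S)[2]$ has class $v-(0,0,1)$ and $q$ is a priori a \emph{second} point of $S$: this comes from analyzing the kernel $K$ of the critical map $\C_q\to L[2]$ (semistability of $K$ together with \cite[Lemma 6.18]{MS17} forces $K\simeq\mathsf D(L^{-1}I_q)[-1]$) and from the vanishing $\Ext^1(\C_p,\mathsf D(L^{-1}I_q))=0$. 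A snake-lemma diagram then reduces the datum of such a quotient to a nonzero map $I_p\to\mathsf D(L^{-1}I_q)$, which is automatically surjective because $\mathsf D(L^{-1}I_q)[-1]$ is stable (its central charge attains the minimal positive imaginary part). Only after this reduction does Terracini enter: for $p=q$ the space $\Hom(I_p,\mathsf D(L^{-1}I_p))\simeq H^0(I_p\otimes^{\dL}K_SL^{-1}I_p)^*$ contains the distinguished subspace $H^0(K_SL^{-1}I_p^2)^*\simeq N_pS^*$, and it is precisely those maps that glue onto $\abs{K_SL^{-1}}\setminus S$ to form $\sQ'$. The remaining maps at $p=q$, and the quotients with $p\neq q$, are what produce the other components; your Step 3, which assumes every new quotient is supported at a single point of $S$, cannot see them and so does not establish that the other components meet $\sQ'$ only in the exceptional locus.

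By contrast, your Step 2 is reasonable and in fact more ambitious than the paper: the paper stops at the fibrewise identification of the exceptional locus and does not construct a morphism from the blow-up by elementary modification of the universal family, nor verify a universal property. You are right to flag that as the place where genuine additional work would be needed to upgrade the pointwise description to a scheme-theoretic isomorphism.
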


\begin{proof}

The first wall is given by quotients $\sO\to L[2]$ which factor through a map 
\[ (\ast)\colon \ \ \sO\to M \to L[2] \] where $[M]=[\C_p]$. We claim that in fact, $M$ has to be a skyscraper sheaf. In other words, the quotients $(\ast)$ correspond exactly to the points of $S$ in $\abs{K_SL^{-1}}$. 

Indeed, the map $a\colon M\to L[2]$ is surjective, hence the shift of its cone $K=\ker a$ belongs to $\sA^\theta_{\alpha,\beta}$. By our choice of $\theta$, this implies that $K$ is $\sigma_{\alpha,\beta}$-semistable. By \cite[Lemma 6.18]{MS17}, we must have that $H^{-1}K$ is torsion free and $\mu$-semistable and $H^0(K)$ is supported on points. Moreover, $H^{-2}(K)=0$ since $K\in \Coh^\beta(S)$.
Write $\ch{}(H^0(K))=(0,0,s)$ with $s\geq 0$. Then since 
\[ \ch{}(H^0(K))-\ch{}(H^{-1}(K))=\ch{}(K)=-\ch{}(L)+(0,0,1) \]
we must have $\ch{}(H^{-1}(K))=(1,\ch{1}(L),t)$ where $-t+s=-\ch{2}(L) + 1$.
Since $H^{-1}K$ is torsion free and $\mu$-semistable, it satisfies the Bogomolov inequality 
\[  t\leq \frac{\ch{1}(H^{-1}(K))^2}{2\rk(H^{-1}(K))} = \ch{2}(L),  \]
whence $s\leq 1$. From the long exact sequence of cohomologies we get
\[ 0\to H^{-2}(M) \to L \xrightarrow{\alpha} H^{-1}(K) \to H^{-1}(M) \to 0\]
and an isomorphism $H^0(K)\simeq H^0(M)$. If $s=0$, then $\ch{}(H^{-1}(K))=\ch{}(L)-1$ and the map $\alpha$ must vanish (otherwise, it would be injective with cokernel of negative length). This implies $K\simeq M$ and $a=0$, which is a contradiction. If $s=1$, then $H^0(M)\simeq \C_p$ for some $p$. Moreover, $\ch{}(H^{-1}(K))=\ch{}(L)$, which implies that $\alpha$ is an isomorphism, $M\simeq H^0(M)$, and $K=\mathsf D(L^{-1}I_p)[-1]$. This proves our initial claim.

The quotients appearing after the wall crossing are extensions in $\Ext^1(\C_p, \mathsf D(L^{-1}I_q))=0$. In other words we have
\[\sO\twoheadrightarrow \C_p \oplus \mathsf D(L^{-1}I_q)\eqqcolon Q\]
and the snake lemma
\begin{center}$(**)\quad$
\begin{tikzcd}[row sep=small]
& &\mathsf D(L^{-1}I_q) \dar\\
E \dar \rar & \sO \ar[equal]{d} \rar & Q \dar \\
I_p \rar \dar  &\sO \rar & \C_p \\
\mathsf D(L^{-1}I_q) & &
\end{tikzcd}
\end{center}
shows that the datum of such a quotient is the datum of a 
surjective map $I_p\to \mathsf D(L^{-1}I_q)$. We claim that every map $b\colon I_p\to \mathsf D(L^{-1}I_q)$ is a surjection in $\sA^\theta_{\alpha,\beta}$: indeed, by our choice of $\theta$, the distinguished triangle 
\begin{equation}
    \label{eq_dist_triang_cone}
    \mathsf D(L^{-1}I_q)[-1] \to C(b) \to I_p 
\end{equation}
implies that $C(b)\in \sA^\theta_{\alpha,\beta}$, as long as the triangle $\eqref{eq_dist_triang_cone}$ doesn't split on any Jordan-H\"older factor of $\mathsf D(L^{-1}I_q)[-1]$. But $\mathsf D(L^{-1}I_q)[-1]$ is stable: by our choices of the parameters, $\im Z_{\alpha,\beta}$ attains its positive minimal value at $\mathsf D(L^{-1}I_q)[-1]$.

If $p=q$, the space 
\[\Hom(I_p,\mathsf D(L^{-1}I_q))\simeq H^0(I_p\otimes^{\mathbb L} K_SL^{-1}I_q)^*\]
admits a codimension 1 subspace $H^0(K_SL^{-1}I_p^2)^*$ (see Remark \ref{rmk_terracini_functors}), which is identified with $N_pS^*$ by Terracini's lemma. This implies that the moduli space after the first wall has an irreducible component isomorphic to $\Bl_S(\pr N)$, which intersects other components in the exceptional locus.
\end{proof}

\begin{remark}\label{rmk_terracini_functors}
The functor \eqref{eq_dual} and Serre duality yield an isomorphism of functors
\[\Hom(-,\mathsf D(L^{-1}I_q))\simeq H^0(-\otimes^{\mathbb L} K_SL^{-1}I_p)^*.\]
Tensoring $I_p\to \sO\to \C_p$ by $K_SL^{-1}I_p$ yields
\[ K_SL^{-1}\otimes \left[ Tor_1(I_p,\C_p) \to I_p\otimes I_p \xrightarrow{a} I_p \to I_p/I_p^2 \right]  \]
where $a$ factors thorugh $I_p^2$. Taking global sections on the short exact sequence $I_p^2 \to I_p \to I_p/I_p^2$ and dualizing, we get
\[ 0= H^1(K_SL^{-1}I_p^2)^*\to H^0(I_p/I_p^2)^* \to H^0(K_SL^{-1}I_p)^* \to H^0(K_SL^{-1}I_p^2)^* \to 0 \]
which is naturally identified with the sequence
\[
0\to T_pS \to T_p\pr N \to N_p S \to 0.
\]
\end{remark}

\begin{remark}
With similar arguments, one can focus on the component $\sQ'$ and study the next wall. Reasoning in analogy with the case of curves, we expect it to involve $\Sigma'$, the strict transform of the variety $\Sigma$ of secant lines of $S$ in $\sQ'$ (see \cite{Be97}). The goal would be to describe a diagram of birational transformations analog to \eqref{eq_thaddeus_diagram} in the context of algebraic surfaces.

The starting point is the following observation: the second wall involves quotients $\sO\to L[2]$ which factor through $\sO_Z$, where $Z$ is a length 2 subscheme of $S$. A quotient of this form  corresponds to a point on the secant line to $S$ through the points $\set{p_1,p_2}=\Supp Z$ (if $Z$ is reduced), or on the tangent line to $S$ determined by $Z$ (if $Z$ is non-reduced), but off the surface $S$. In other words, $\Sigma' \setminus E$ contains quotients that are critical for the second wall. The analysis of points in $\Sigma' \cap E$ gets more involved, and we leave it as an open question for future investigation.
\end{remark}

\subsection{Comparison with Pandharipande--Thomas stable pairs}\label{sec_PT_invariants}

In \cite{PT_curve_counting} (see also \cite[Appendix B]{PT_stable_pairs_BPS}), the authors define the moduli space $P(S)$ of stable pairs $(F,s)$ where $F$ is a sheaf of fixed Hilbert polynomial supported in dimension one and $s \in H^0(S,F)$ is a section. The stability condition is:
\begin{enumerate}
    \item the sheaf $F$ is pure;
    \item the section $s$ has 0-dimensional cokernel.
\end{enumerate}

PT pairs can be constructed as Quot schemes in a single tilt as in \cite[Lemma 2.3]{Bri11_Hall_curve_counting}, here we compare them with another Quot scheme of quotients in a double tilt.

Let $C$ be a curve of class $\gamma\in H_2(S,\Z)$ and denote by $\iota\colon C\to S$ the inclusion. A special class of stable pairs is given by $(\iota_*(K_CL_C^{-1}), s)$ where $L_C$ is a line bundle on $C$, and $s$ is induced by an element of $H^0(C, K_CL_C^{-1})$. Since $\iota$ is affine, we have 
\begin{equation}
\label{eq_chain_of_iso_PT}
    \begin{split}
  H^0(S,\iota_*(K_CL_C^{-1}))\simeq H^0(C,K_CL_C^{-1}) \simeq H^1(C,L_C)^*\\
   \simeq H^1(S,\iota_*L_C)^*\simeq \Hom(\sO_S,\iota_*L_C[1])^*.
    \end{split}
\end{equation}

These isomorphisms suggest a relation between PT pairs and maps 
\begin{equation}
    \label{eq_PT_stable_pair}
q\colon \sO_S\to \iota_*L_C[1].
\end{equation}

We now find necessary conditions to interpret $q$ as a quotient in $\sA_{\alpha,\beta}^\theta$. Let $d$ be the degree of $L_C$, we write $e\coloneqq d-\frac{\gamma^2}{2}$ and assume $d\ll 0$. Then set
$$v\coloneqq\ch{}(\iota_* L_C)=(0,\gamma,e).$$
As before, a necessary condition for a map $q$ to be in $\sA_{\alpha,\beta}^\theta$ is that $\sO_S\in\sT^\beta \cap\sT_{\alpha,\beta}^\theta$, and $\nu_{\alpha,\beta}(v)\leq \theta$ (since $\mu(v)=+\infty$, so that a sheaf of class $v$ cannot belong to $\sF^\beta$ for any $\beta$). In other words, the necessary conditions are $\beta<0$ and $\nu_{\alpha,\beta}(v) < \nu_{\alpha,\beta}(\sO_S)$. The inequality reads
\[ \frac{e}{\gamma H}-\beta < \frac{\beta^2 - \alpha^2}{-2\beta}  \]
Rewrite this as
\[ 2\beta^2 - 2\frac{\beta e}{\gamma H} < \beta^2 - \alpha^2\]
\[\alpha^2 + \left(\beta -\frac{e}{\gamma H}\right)^2 < \left(\frac{e}{\gamma H}\right)^2\]
the equation of a circle centered at the point $(\alpha=0,\beta=\frac{e}{\gamma H})$. Then, fix parameters $\alpha_0,\beta_0= \frac{e}{\gamma H}$ near the center of this circle: they satisfy the necessary conditions.

As an example, we describe the Quot scheme one obtains under the simplifying assumption that $\gamma$ is an indecomposable effective curve class class satisfying $\gamma.H=1$ and $\gamma^2=1$. In this case, it is easy to classify $\sigma_{\alpha,\beta}$-semistable objects of class $v$ for $\alpha>0$ and $\beta_0=e$. In fact, we have 

\begin{lemma}\label{lem_wall-crossing}
For all $\alpha>0$, the $\sigma_{\alpha,e}$-semistable objects of class $v$ are pure one-dimensional sheaves supported on a curve of class $\gamma$. Moreover, all these objects are stable.
\end{lemma}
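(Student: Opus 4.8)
The plan is to show that, for every $\alpha>0$, an object of class $v=(0,\gamma,e)$ is $\sigma_{\alpha,e}$-semistable precisely when it is a Gieseker-semistable pure one-dimensional sheaf of Chern character $(0,\gamma,e)$, and then to observe that every such sheaf is automatically Gieseker-stable. The second part is quick: a pure one-dimensional sheaf $E$ of class $v$ has fundamental support cycle $\gamma$, and since $\gamma$ is indecomposable effective with $\gamma.H=1$ and $H$ is ample, its support is an integral curve $C$ with $[C]=\gamma$ on which $E$ has generic rank one; hence $E\simeq\iota_*L$ with $\iota\colon C\hookrightarrow S$ and $L$ a rank one torsion-free sheaf on $C$. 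Any proper nonzero subsheaf $F\subsetneq E$ is again pure of multiplicity one along $C$, so $E/F$ has $0$-dimensional support of positive length, $\chi(F)<\chi(E)$, and $p_F<p_E$; thus $E$ is Gieseker-stable, and a fortiori $\sigma_{\alpha,e}$-stable once the equivalence of the two stabilities is known.

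To prove the equivalence I would proceed in two steps. First, the easy part for $\alpha\gg 0$: because $\ch 0(v)=0$ and $\ch 1^\beta(v)\cdot H=\gamma.H=1>0$, if $E$ is $\sigma_{\alpha,e}$-semistable of class $v$ and $H^{-1}(E)\neq 0$, then $H^{-1}(E)[1]\subset E$ in $\Coh^e(S)$ has $\nu_{\alpha,e}\to+\infty$ as $\alpha\to\infty$, contradicting semistability; so $E$ is a sheaf, then purity follows since a $0$-dimensional subsheaf has $\nu_{\alpha,e}=+\infty$, and comparing $\nu_{\alpha,e}$ with the reduced Hilbert polynomial identifies $\sigma_{\alpha,e}$-stability with Gieseker stability for these sheaves — the point being that a subsheaf $F$ of a rank-one-on-$C$ sheaf again has $\ch 1(F)=\gamma$, so the adjunction contribution to $\chi$ is the same as for $E$. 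Second, one must show that the class $v$ has no walls on the ray $\set{\beta=e,\ \alpha>0}$, so the description persists for all $\alpha>0$ (this absence of walls may also be quoted from a result in the spirit of \cite{AM16}).

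For the no-walls step I would argue as follows. On a wall, a destabilising short exact sequence $0\to F\to E\to G\to 0$ in $\Coh^e(S)$ has $\ch 1^\beta(F)\cdot H+\ch 1^\beta(G)\cdot H=\gamma.H=1$ with both summands $\geq 0$; the degenerate cases (one summand zero, or $\ch 0(F)\ne 0$ with $\ch 1^\beta(F)\cdot H=1$) force $G=0$ or $F=E$ numerically and so are excluded directly, leaving the case where both summands are strictly positive. Since $\ch 1^\beta(\,\cdot\,)\cdot H\in\Z+\tfrac12\ch 0(\,\cdot\,)H^2$ when $\beta=e=d-\tfrac{\gamma^2}{2}$, this forces $\ch 0(F)H^2$ odd and $\ch 1^\beta(F)\cdot H=\ch 1^\beta(G)\cdot H=\tfrac12$. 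One then combines the Bogomolov--Gieseker inequality along the wall, $\Delta(F)+\Delta(G)\leq\Delta(v)=\gamma^2=1$ (so one of $\Delta(F),\Delta(G)$ vanishes), the Hodge index theorem, and the sign $e<0$ (guaranteed by $d\ll 0$) to check that no Chern character $\ch{}(F)$ compatible with $F$ being a subobject of a pure sheaf of class $v$ supported on an integral curve can satisfy all these constraints; this contradiction rules out walls.

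The main obstacle is exactly this last point: excluding destabilising subobjects of positive rank in the borderline case $\ch 1^\beta(F)\cdot H=\tfrac12$. The reduction to Gieseker stability in the large volume regime, the purity argument, and the automatic stability of rank one sheaves on an integral curve are all routine; essentially all of the content is in the numerical no-walls argument, where one has to exploit the minimality of both $\gamma.H=1$ and $\Delta(v)=\gamma^2=1$ together with the hypothesis $d\ll 0$.
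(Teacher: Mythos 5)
Your overall strategy (large volume limit plus a no‑walls argument on the ray $\beta=e$, $\alpha>0$) is the same as the paper's, but the proposal does not actually prove the lemma: the step you yourself flag as ``the main obstacle'' --- excluding a destabilising subobject $F$ of positive (odd) rank with $\ch 1^e(F)\cdot H=\tfrac12$ --- is precisely where all the content of the no‑walls claim lives, and you leave it unresolved. Note that this case cannot be dismissed on parity grounds: since $\gamma^2=1$ and $\gamma.H=1$, the Hodge index theorem forces $H^2=1$, so $\ch 1^e(F)\cdot H\in\Z+\tfrac12\ch 0(F)$ really does take the value $\tfrac12$ on numerical classes of odd rank. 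You list the right ingredients (the inequality $\Delta(F)+\Delta(G)\leq\Delta(v)=1$ along a wall, Hodge index, $e<0$), but you do not combine them into a contradiction, and without that the wall‑and‑chamber structure in $\alpha$ is not controlled, so neither the identification with pure sheaves for \emph{all} $\alpha>0$ nor the stability statement follows. The paper's proof instead invokes the standard wall computation (citing \cite[Prop.\ 2.9]{APR19}) to conclude that any potential destabiliser $F'$ satisfies $\ch 1^e(F')\cdot H=0$; such an $F'$ lies in $\PP(1)$, hence destabilises for every $\alpha$ or for none, so there are no walls, and the large volume limit description persists.

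Two smaller points. First, your derivation of stability is logically backwards as written: Gieseker stability of a rank‑one sheaf on the integral curve $C$ does not ``a fortiori'' give $\sigma_{\alpha,e}$-stability; you need the equivalence of the two notions, which again rests on the missing no‑walls step. The paper argues stability directly from the minimality of $\im Z_{\alpha,e}$ on the class $v$, which is cleaner (though it, too, implicitly uses that no subobject with $\im Z_{\alpha,e}=\tfrac12$ can have the same finite slope). Second, a useful simplification you could exploit in the no‑walls step: with $\beta=e$ one computes $\ch 2^e(v)=0$, so $\nu_{\alpha,e}(v)=0$ for all $\alpha$, and a wall would require a subobject $F$ with $\ch 2^e(F)=\tfrac{\alpha^2}{2}\ch 0(F)H^2$ for some $\alpha>0$ together with $\Delta(F)+\Delta(G)\leq 1$ and the Hodge index bound $(\ch 1^e(F))^2\leq\tfrac14$; this is the computation that must be carried out (or quoted) to close the gap.
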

\begin{proof}
The usual computation for walls (see for example \cite[Prop. 2.9]{APR19}) shows that a potential destabilizer $F'$ must have $\ch 1^\beta(F')H=0$ by our assumption on $\gamma$. This implies that $F'\in \PP(1)$ is always destabilizing for all values of $\gamma$. In other words, if $F$ is $\sigma_{\alpha,e}$-semistable for some $\alpha>0$, then it is semistable for all $\alpha > 0$. By the Large Volume Limit, $F$ must then be a pure one dimensional sheaf supported on a curve of class $\gamma$. Stability follows because $\im Z_{\alpha,e}(F)$ is the minimum for $\im Z_{\alpha,e}$.
\end{proof}

Now we give a description of $\sQ^\theta(v)\coloneqq \Quot_{S/\text{pt}}^{\tau^\theta_{\alpha_0,e}}(\sO,v)(\text{pt})$ in the case when $\theta=\nu_{\alpha_0,e}(v)$, under the additional assumptions on $\gamma$. Let $\sM$ denote the moduli space of pure dimension 1 subschemes of $S$ of class $\gamma$ in $S$. Then $\sM$ is a Hilbert scheme of curves, denote by $\sC$ and $\Pic^d(\sC)$ the universal curve and the universal Jacobian, respectively. Consider the Cartesian square

\begin{center}
\begin{tikzcd}[row sep=small]
\sC \times_\sM \Pic^d(\sC) \rar{\pi} \dar{\rho} & \Pic^d(\sC) \dar\\
\sC \rar & \sM 
\end{tikzcd}
\end{center}
and the vector bundle $\mathcal E \coloneqq \pi_*(\rho^*\omega_{\sC/\sM} \otimes \PP^{-1})$ on $\Pic(\sC)$, where $\omega_{\sC/\sM}$ is the relative canonical bundle of $\sC\to \sM$, and $\PP$ denotes the Poincar\'e line bundle on $\sC \times_\sM \Pic(\sC) $. The fiber of the projective bundle  $\mathbb P (\sE)$ above a point $(C,L_C)\in \Pic(\sC)$ is $\mathbb P(H^0(C,K_C L_C^{-1}))\simeq \mathbb P (\Hom(\sO_S,\iota_*L_C[1])^*)$.



\begin{proposition}\label{prop_PT_moduli_space}
Let $\theta=\nu_{\alpha_0,e}(v)$. Then the Quot scheme $\sQ^\theta(v)$ is isomorphic to $\mathbb{P}(\sE)$.
\end{proposition}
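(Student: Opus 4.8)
The plan is to follow the proof technique used in \S\ref{sec_Th_pairs_on_surfaces} to identify $\sQ^\theta(v)$ with $\abs{K_SL^{-1}}$, feeding in Lemma \ref{lem_wall-crossing} in place of the classification of $\sigma$-semistable objects used there, and then to check that the resulting fibrewise projective spaces glue into the bundle $\mathbb P(\sE)$.

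First I would describe the quotient objects. Let $\sO_S\to N$ be a point of $\sQ^\theta(v)$ with kernel $K\in\sA^\theta_{\alpha_0,e}$. Arguing as in the proof of Proposition \ref{prop_boundary_cases} (and of the boundary-case proposition in \S\ref{sec_Th_pairs_on_surfaces}): since the image of $\nu_{\alpha_0,e}$ is discrete and $\theta=\theta_0=\nu_{\alpha_0,e}(v)$ is its relevant minimum, the $\Coh^\beta(S)$-cohomology $H^0(N)$, which is a quotient of $\sO_S$, must vanish, so $N=M[1]$ with $M\in\Coh^\beta(S)$ of class $v$; since $M$ lies in $\sF^\theta_{\alpha_0,e}$ and has slope $\theta$ it is $\sigma_{\alpha_0,e}$-semistable. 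By Lemma \ref{lem_wall-crossing}, $M$ is a pure one-dimensional $\sigma_{\alpha_0,e}$-stable sheaf supported on a curve $C$ of class $\gamma$; as $\gamma.H=1$ and $\gamma^2=1$ force $C$ to be integral, $M\simeq\iota_*L_C$ for a uniquely determined $(C,L_C)\in\Pic^d(\sC)$ (were some curve of class $\gamma$ singular, one would work with its compactified Jacobian throughout). Conversely, every nonzero $q\colon\sO_S\to\iota_*L_C[1]$ is a surjection in $\sA^\theta_{\alpha_0,e}$, by the argument of \S\ref{sec_Th_pairs_on_surfaces}: $\sO_S\in\sT^\theta_{\alpha_0,e}$ and $\iota_*L_C\in\sF^\theta_{\alpha_0,e}$ (it is a torsion sheaf, hence in $\Coh^\beta(S)$, and $\sigma_{\alpha_0,e}$-stable of slope exactly $\theta$), so by Lemma \ref{lem_Noohi_surjectivity} it suffices that the cone $K$ of $q$, sitting in a non-split extension $\iota_*L_C\to K\to\sO_S$ in $\Coh^\beta(S)$, lie in $\sT^\theta_{\alpha_0,e}$; otherwise a destabilizing quotient of $K$ would — using the $\sigma_{\alpha_0,e}$-stability of $\iota_*L_C$ and the fact that every quotient of $\sO_S$ in $\Coh^\beta(S)$ has $\nu_{\alpha_0,e}>\theta$ — split off the summand $\iota_*L_C$, contradicting $q\neq 0$. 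Since $\Aut(\iota_*L_C[1])=\C^{*}$, two nonzero maps have the same kernel iff they are proportional, and this identifies the closed points of $\sQ^\theta(v)$ with the pairs $\big((C,L_C),[q]\big)$, $[q]$ a line in $\Hom(\sO_S,\iota_*L_C[1])\simeq H^1(C,L_C)\simeq H^0(C,K_CL_C^{-1})^{*}$, that is, with the closed points of $\mathbb P(\sE)$.

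To upgrade this bijection to an isomorphism of schemes I would build a universal quotient over $\mathbb P(\sE)\times S$. Pushing the Poincar\'e bundle $\PP$ forward along the closed immersion $\sC\times_\sM\Pic^d(\sC)\hookrightarrow\Pic^d(\sC)\times S$ gives a sheaf $\mathcal L$, flat over $\Pic^d(\sC)$, with $\mathcal L_{(C,L_C)}\simeq\iota_*L_C$ and with $\sE$ dual to its relative $H^1$; the tautological surjection on $\mathbb P(\sE)$, combined with the base-change isomorphism $\Hom(\sO_S,\mathcal L_{(C,L_C)}[1])\simeq\sE^{*}_{(C,L_C)}$, then produces a morphism $\sO_{\mathbb P(\sE)\times S}\to p^{*}\mathcal L[1]\otimes q^{*}\sO_{\mathbb P(\sE)}(1)$ (with $p,q$ the projections from $\mathbb P(\sE)\times S$) whose restriction to each fibre is one of the maps above, hence fibrewise surjective in $\sA^\theta_{\alpha_0,e}$, and whose target is fibrewise a shift of $\iota_*L_C$, hence $\mathbb P(\sE)$-flat with respect to $\tau^\theta_{\alpha_0,e}$ and relatively perfect. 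By Definition \ref{def_quot_space} this yields a morphism $\mathbb P(\sE)\to\sQ^\theta(v)$. For the inverse, the $H^{-1}$ of the universal quotient on $\sQ^\theta(v)\times S$ is a flat family of $\sigma_{\alpha_0,e}$-stable sheaves of class $v$, which by the first step and the modular property of $\Pic^d(\sC)$ defines a morphism $\sQ^\theta(v)\to\Pic^d(\sC)$; the universal quotient map then projectivizes to $\sQ^\theta(v)\to\mathbb P(\sE)$, and one checks the two compositions are the identity at the level of universal families.

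The step I expect to be the main obstacle is this last one: constructing the universal object on $\mathbb P(\sE)\times S$ and, especially, verifying flatness with respect to the tilted $t$-structure $\tau^\theta_{\alpha_0,e}$ and the relative perfectness required in Definition \ref{def_quot_space} — the point-level bijection is essentially the proposition of \S\ref{sec_Th_pairs_on_surfaces} once more, but promoting it to a morphism of moduli functors requires care with the constant family of double tilts (compare the caveats in \S\ref{Sec_premise}). A secondary point is the identification of $\sigma_{\alpha_0,e}$-stable sheaves of class $v$ with $\Pic^d(\sC)$: one should either check that the hypotheses on $\gamma$ force every curve of class $\gamma$ to be smooth, or systematically replace $\Pic^d(\sC)$ by the relative compactified Jacobian.
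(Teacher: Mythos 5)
Your proposal is correct and follows essentially the same route as the paper's proof: both use Lemma \ref{lem_wall-crossing} to identify the quotient objects with shifts of pure one-dimensional sheaves $\iota_*L_C$, both verify that every nonzero map $\sO_S\to\iota_*L_C[1]$ is a surjection in $\sA^\theta_{\alpha_0,e}$ via the stability/non-splitting argument of Proposition \ref{prop_wall_terracini}, and both conclude by exhibiting mutually inverse morphisms between $\sQ^\theta(v)$ and $\mathbb P(\sE)$. You simply spell out the universal-family step that the paper dismisses as ``straightforward to check,'' and your caveat about possibly singular curves of class $\gamma$ (compactified Jacobians) is a reasonable point the paper leaves implicit.
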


\begin{proof}
For every closed point $t\in \sQ^\theta(v)$ we get a map $q_t\colon \sO_S \to \sL_t[1]$. The chosen value of $\theta$ implies that $\sL_t[1]$ is $\sigma_{\alpha_0,e}$-semistable. Then, by Lemma \ref{lem_wall-crossing}, $\sL_t$ is a pure one-dimensional sheaf supported on a curve $C$ of class $\gamma$, so every $q_t$ has the form \eqref{eq_PT_stable_pair}. Then, $q_t$ identifies a one-dimensional quotient $\Hom(\sO_S,\iota_*L_C[1])^*\twoheadrightarrow [q_t]$, and the assignment
\[q_t \mapsto [q_t]\]
defines a map $\sQ^\theta(v)\to \mathbb P(\sE)$. Conversely, $\mathbb P(\sE)$ parametrizes quotients of $\sO_S$ of class $v$: observe that all non-trivial maps $q\colon \sO_S\to \iota_*L_C[1]$ are surjections in $\sA^\theta_{\alpha_0,e}$, since stability of $\iota_*L_C[1]$ (Lemma \ref{lem_wall-crossing}) implies that the cone $C(q)=K[1]$ satisfies $K\in \sT^\theta_{\alpha,\beta}$ (cfr. the proof of Prop \ref{prop_wall_terracini}). Hence, $\mathbb P(\sE)$ admits a morphism to $\sQ^\theta(v)$. It is straightforward to check that these morphisms are inverses of one another. 
\end{proof}

\bibliographystyle{amsalpha}
\bibliography{./bibliography1}

\end{document}